\theoremstyle{definition}
\newtheorem*{theorem*}{Theorem}
\numberwithin{equation}{section}
\numberwithin{equation}{section}
\newcommand\op{\operatorname}
\newcommand\mvv\Bbb
\newcommand\pone{\Bbb{P}^1}
\newcommand\tensor{\otimes}
\newcommand\ml{\mathcal{L}}
\newcommand\im{\operatorname{im}}
\newcommand\rk{\operatorname{rk}}
\newcommand\Gr{\operatorname{Gr}}
\newcommand{\leto}[1]{\stackrel{#1}{\to}}
\newcommand\mc{\mathcal}
\newcommand{\ovop}[1]{\overline{\operatorname{#1}}}
\newcommand{\ovmc}[1]{\overline{\mathcal{#1}}}
\newtheorem{theorem}{Theorem}[section]
\newtheorem{remark}[theorem]{ Remark}
\newtheorem{conjecture}[theorem]{Conjecture}
\newtheorem{corollary}[theorem]{Corollary}
\newtheorem{question}[theorem]{Question}
\newtheorem{proposition}[theorem]{Proposition}
\newtheorem{lemma}[theorem]{Lemma}
\theoremstyle{definition}
\newtheorem{definition}[theorem]{Definition}
\newtheorem{definition/lemma}[theorem]{Definition/Lemma}
\newcommand{\sL}{\mathfrak{sl}}
\newtheorem{example}[theorem]{\bf Example}
\begin{document}





\title{Scaling of conformal blocks and generalized theta functions over $\ovmc{M}_{g,n}$}

\author{Prakash Belkale, Angela Gibney, and Anna Kazanova}

\date{}
\maketitle




\begin{abstract} By way of intersection theory on $\ovmc{M}_{g,n}$, we show that geometric interpretations for conformal blocks, as sections of ample line bundles over projective varieties, do not have to hold at points on the boundary. We show such a translation would imply certain recursion relations for first Chern classes
of these bundles.   While recursions can fail, geometric interpretations are shown to hold under certain conditions.

\end{abstract}

\section{Introduction}
Conformal blocks are vector spaces associated to stable curves together with certain Lie theoretic data. These form vector bundles on the moduli stacks
$\overline{\mc{M}}_{g,n}$ parametrizing stable $n$-pointed curves of genus g.  For such a nontrivial bundle $\mathbb{V}$, if $x$ is a smooth n-pointed curve, then by \cites{BeauvilleLaszlo,  Faltings, KNR,  Pauly, LaszloSorger}, there is a  canonical isomorphism $\mathbb{V}|_x^* \cong \op{H}^0(\mc{X}_x,\mc{L}_x)$,
where  $\mc{X}_x$ is a moduli space determined by the Lie data, and $\mc{L}_x$  is a canonical ample line bundle on it.
The  $\op{H}^0(\mc{X}_x,\mc{L}_x)$ are called generalized theta functions.

These isomorphisms at {\em{interior points}} $x$ respect both multiplication operations on global sections and the algebra structure on conformal blocks.  For example, in type $\op{A}$, for $x\in \mc{M}_{g,n}$,
   one has a natural identification  of  graded algebras
 \begin{equation}\label{isomI}
\bigoplus_{m\geq 0} \Bbb{V}[m]|^*_{x}\cong \bigoplus_{m\ge 0} \op{H}^0(\mc{X}_x,\mc{L}_x^{\tensor m}),
 \end{equation}
where the $\Bbb{V}[m]$, for $m \in \mathbb{N}$,  are bundles obtained from $\mathbb{V}$ under an operation called {\em{scaling}} (Def~\ref{Stretching}).

Since these results were proved, it has been an open question as to
whether such  canonical isomorphisms exist at {\em{all points}} $x$ in the moduli space. This is natural, and also interesting, as
each side of Equation \eqref{isomI} has an advantage over the other.  Conformal blocks  give vector bundles
on the entire moduli stack $\ovmc{M}_{g,n}$, and can be used to study its geometry. For instance, first Chern classes are base point free divisors on $\ovmc{M}_{0,n}$ \cite{Fakh}.  On the other hand, for a polarized projective variety $(\mc{X}, \mc{L})$, the algebra $\bigoplus_{m\geq 0}\op{H}^0(\mc{X},\mc{L}^{\tensor m})$ is finitely generated.  Existence of isomorphisms \eqref{isomI} at all boundary points $x$ would imply that the algebra of conformal blocks $\bigoplus_{m\geq 0} \Bbb{V}[m]|^*_{x}$ is finitely generated as well,  an open question.

\smallskip
Our main result is the following:

\begin{theorem}\label{MainI}There are stable $n$-pointed curves $x=(C; p_1,\ldots,p_n)$ and vector bundles of conformal blocks $\mathbb{V}$, for which there are no polarized pairs $(\mc{X}_x,\mc{L}_x)$ such that Equation \ref{isomI} holds.\end{theorem}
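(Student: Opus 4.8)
The plan is to argue by contradiction, extracting a numerical consequence from the hypothetical family of isomorphisms \eqref{isomI} and then exhibiting conformal block bundles that violate it. The key observation is that if $(\mc{X}_x,\mc{L}_x)$ exists at a boundary point $x$, then the Hilbert polynomial of $\mc{L}_x$ is determined by the ranks of the scaled bundles $\Bbb{V}[m]$, since $\dim \op{H}^0(\mc{X}_x,\mc{L}_x^{\tensor m}) = \operatorname{rk}\Bbb{V}[m]$ for all $m\gg 0$ (indeed for all $m\ge 0$ if the algebra identification is to hold). In particular the leading coefficient, and more importantly the full polynomial behavior in $m$, is pinned down. I would compare this with what intersection theory on $\ovmc{M}_{g,n}$ forces: the first Chern class $c_1(\Bbb{V}[m])$ — equivalently the numerical data governing $\operatorname{rk}\Bbb{V}[m]$ as $x$ degenerates — must then satisfy a recursion inherited from the factorization/degeneration rules for conformal blocks (sewing, and the behavior of scaling under it). So the first step is to make precise the \emph{recursion relation for first Chern classes} alluded to in the abstract: degenerate $x$ further to a maximally degenerate curve, express $\operatorname{rk}\Bbb{V}[m]|_x$ via factorization as a sum over intermediate weights of products of ranks of scaled blocks on lower-genus / fewer-pointed pieces, and observe that the "geometric interpretation holds everywhere" hypothesis would force this sum to equal $h^0$ of a single polarized variety, hence be given by a single Hilbert polynomial — a strong constraint linking the Hilbert polynomials across the strata.

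The second step is the construction of the counterexample. Here I would look for a conformal block bundle $\Bbb{V}$ (most cleanly in type $\op{A}$, on $\ovmc{M}_{0,n}$ or with small genus) for which the sequence $m \mapsto \operatorname{rk}\Bbb{V}[m]|_x$ at a carefully chosen boundary point $x$ is \emph{not} eventually polynomial, or is eventually polynomial of a degree incompatible with the dimension/degree of any candidate $\mc{X}_x$. The natural source of such failures is that scaling does not commute with restriction to the boundary: $\Bbb{V}[m]|_x$ need not agree with the $m$-th scaling of $\Bbb{V}|_x$, so the graded pieces can behave erratically. Concretely, I would: (i) pick Lie data where the propagation/scaling operation introduces new nonzero blocks only for certain congruence classes of $m$, or where the ranks grow with a "wrong" leading term; (ii) compute $\operatorname{rk}\Bbb{V}[m]$ explicitly using the Verlinde-type formula together with the combinatorics of scaling (Def~\ref{Stretching}); (iii) restrict to the boundary point $x$ and use factorization to see the rank on the nodal curve; (iv) observe that the resulting function of $m$ has no constant-degree polynomial tail, or that $\sum_m \operatorname{rk}\Bbb{V}[m]|_x t^m$ is not the Hilbert series of any projective variety (e.g. it fails to be a rational function with the required pole structure, or its values are not eventually nonnegative polynomial).

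The third step is to close the contradiction: if \eqref{isomI} held at $x$ with some $(\mc{X}_x,\mc{L}_x)$, then $\bigoplus_m \op{H}^0(\mc{X}_x,\mc{L}_x^{\tensor m})$ is a finitely generated algebra whose Hilbert function is eventually a polynomial in $m$ of degree $\dim\mc{X}_x$; matching this with the computed $\operatorname{rk}\Bbb{V}[m]|_x$ yields the desired impossibility. I expect the \textbf{main obstacle} to be step two: not the Verlinde computation itself, but choosing the Lie data and the boundary stratum so that the mismatch is genuinely obstructive rather than an artifact one could absorb by a clever choice of $\mc{X}_x$ — that is, ruling out \emph{all} polarized pairs, not just the "expected" moduli space of bundles. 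Handling this likely requires isolating an intrinsic invariant of the graded algebra (finite generation, or the precise asymptotics/periodicity of $\dim\Bbb{V}[m]|_x$) that no Hilbert series can match, and that is exactly the recursion-failure phenomenon the abstract promises.
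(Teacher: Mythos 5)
There is a genuine gap, and it sits exactly where you predicted the main obstacle would be: your proposed obstruction can never materialize. The counterexample you outline in step two rests on finding a boundary point $x$ at which $m \mapsto \rk \Bbb{V}[m]|_x$ fails to be the Hilbert function of any polarized variety (wrong polynomial degree, periodicity, non-rational Hilbert series, etc.). But each $\Bbb{V}[m]$ is a vector bundle on the connected stack $\ovmc{M}_{g,n}$, so $\rk \Bbb{V}[m]|_x$ is independent of $x$: at a boundary point it equals its value at an interior point, where it is literally $h^0(\mc{X}_x,\mc{L}_x^{\tensor m})$ for the moduli space of parabolic bundles. Thus the rank sequence is always an honest Hilbert function, and no purely numerical analysis of ranks (your steps (ii)--(iv)) can rule out the existence of \emph{some} polarized pair at the boundary. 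The paper makes this point implicitly in its own counterexample: for $\mathbb{V}=\mathbb{V}(\sL_2,1)$ on $\ovmc{M}_{2}$ the rank identity $\sum_{0\leq \lambda,\mu\leq m}\rk\Bbb{V}({\sL_2,\{\lambda,\lambda,\mu,\mu\},m})=\frac{(m+3)(m+2)(m+1)}{6}$ \emph{does} hold; ranks see nothing. Your parenthetical identification of ``$c_1(\Bbb{V}[m])$'' with ``the numerical data governing $\rk \Bbb{V}[m]$ as $x$ degenerates'' conflates two different invariants, and the argument collapses at that point.

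The obstruction the paper actually uses lives one level deeper, in the Picard group rather than in ranks. If geometric interpretations held at all points and $\mathbb{V}$ has $\Delta$-invariant zero rank scaling, then the varieties $\mc{X}_x$ are varieties of minimal degree for \emph{every} $x$, their ideal sheaves have the canonical linear resolution \eqref{mimic}, and these resolutions glue (Proposition \ref{VB}) into an exact sequence of vector bundles \eqref{LESsheaves} over all of $\ovmc{M}_{g,n}$. Taking first Chern classes yields the recursion \eqref{MainEq}, expressing $c_1(\Bbb{V}[m])$ for all $m$ in terms of $c_1(\Bbb{V}[1]),\dots,c_1(\Bbb{V}[D])$. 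For $\mathbb{V}(\sL_2,1)$ on $\ovmc{M}_2$, where $(\mc{X}_x,\mc{L}_x)\cong(\Bbb{P}^3,\mathcal{O}(1))$ on the interior (so $D=1$), this forces $c_1(\Bbb{V}[m])={{m+3}\choose{4}}c_1(\Bbb{V})$, and this identity is refuted already at $m=2$ by intersecting both sides with the two types of $\op{F}$-curves and computing degrees via Fakhruddin's formulas ($4\neq 5$ on the pigtail curve, $-\tfrac{19}{12}\neq-\tfrac{10}{6}$ on the $\ovmc{M}_{1,1}$ curve). So a correct proof along your general lines needs to replace the rank/Hilbert-function comparison by a degree/$c_1$ comparison on test curves, together with the gluing-of-resolutions argument that turns ``geometric interpretation everywhere'' into a Chern class constraint; without that replacement the contradiction you seek in step three cannot be reached.
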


To prove Thm \ref{MainI}, we give an obstruction to the existence of a geometric interpretation of conformal blocks at points $x$ on the boundary of $\ovmc{M}_{g,n}$ for  bundles  for which
$(\mc{X}_x,\mc{L}_x)$  are well understood. The $\Delta$-invariant of $(\mc{X}_x,\mc{L}_x)$ is the quantity
$\Delta(\mc{X}_x,\mc{L}_x)= \op{dim}(\mc{X}_x) +\mc{L}^{\op{dim(\mc{X}_x)}} - {h}^0(\mc{X},\mc{L}_x)$ (see Def \ref{DeltaGenus}).   We say that $\mathbb{V}$ has $\Delta$-invariant zero rank scaling if $\Delta(\mc{X}_x,\mc{L}_x)=0$ for $x\in \mc{M}_{g,n}$ such that $\mathbb{V}|_x^*\cong {H}^0(\mc{X}_x,\mc{L}_x)$. Such pairs can be realized as projective varieties of minimal degree (see Section \ref{Delta}).  We show the following.

\begin{theorem}\label{MainLemmaI} Suppose that $\mathbb{V}$ has  $\Delta$-invariant zero rank scaling, and there are polarized pairs $(\mc{X}_x,\mc{L}_x)$ such that Equation \ref{isomI} holds for all points $x$ in $\ovmc{M}_{g,n}$.  Then for $m\ge 1$:
 \begin{equation}\label{MainEqI}
c_1(\Bbb{V}[m])=  \sum_{1\le i \le D} \alpha_i(m) \ c_1(\mathbb{V}[i]), \ \ \mbox{where } \ \ D=\mc{L}_x^{\op{dim}\mc{X}_x}
\end{equation}
and where the coefficients $\alpha_i(m)$ are polynomials in $m$ (see Theorem \ref{MainLemma} for a precise statement).
\end{theorem}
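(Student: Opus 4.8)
The plan is to realize the whole family of graded algebras $\bigoplus_{m\ge 0}\mathbb{V}[m]^{*}$ as the relative homogeneous coordinate ring of a flat family of varieties of minimal degree over $\ovmc{M}_{g,n}$, and then to extract the recursion from the (relative) linear resolution of that family. Write $B=\ovmc{M}_{g,n}$, $\mathbb{E}=\mathbb{V}^{*}$, and $d=\dim\mc{X}_x$. The hypothesis $\Delta(\mc{X}_x,\mc{L}_x)=0$ gives $\operatorname{rk}\mathbb{V}=h^{0}(\mc{X}_x,\mc{L}_x)=d+D$, so $\mc{X}_x\subset\mathbb{P}^{N}$ with $N=d+D-1$ is nondegenerate of minimal degree $D=\operatorname{codim}+1$. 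First I would note that $m\mapsto\operatorname{rk}\mathbb{V}[m]=h^{0}(\mc{X}_x,\mc{L}_x^{\tensor m})$ (valid at interior $x$ by \eqref{isomI}) is locally constant on the irreducible stack $B$, hence agrees for all of $B$ with the Hilbert polynomial $P(m)$ of a minimal-degree variety. In particular $d$ and $D$ are intrinsic to $\mathbb{V}$, and every fibre $\mc{X}_x$ — boundary ones included — has $\Delta=0$, so by the classification recalled in Section~\ref{Delta} is arithmetically Cohen--Macaulay: projectively normal, cut out by quadrics, with vanishing intermediate cohomology and a pure linear minimal free resolution of length $\operatorname{codim}=D-1$ whose graded Betti numbers $\beta_{0}=1,\ \beta_{i}=i\binom{D}{i+1}$ depend only on $d$ and $D$.

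Granting \eqref{isomI} at every $x$, the sheaf of graded $\mathcal{O}_{B}$-algebras $\mc{A}:=\bigoplus_{m\ge 0}\mathbb{V}[m]^{*}$ is fibrewise the coordinate ring of $(\mc{X}_x,\mc{L}_x)$, hence generated in degree $1$; by Nakayama the natural map $\operatorname{Sym}^{\bullet}\mathbb{E}\to\mc{A}$ is surjective, so $\mc{X}:=\underline{\operatorname{Proj}}_{B}\mc{A}$ is a closed subscheme of $\mathbb{P}:=\mathbb{P}(\mathbb{E})$, flat over $B$ (its fibrewise Hilbert polynomial is the constant $P$ and $B$ is reduced), with fibres $\mc{X}_x$. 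Cohomology and base change, together with the fibrewise vanishing $H^{>0}(\mc{X}_x,\mc{L}_x^{\tensor m})=0$ for $m\ge 1$, give $\pi_{*}\mathcal{O}_{\mc{X}}(m)=\mathbb{V}[m]^{*}$ and $R^{>0}\pi_{*}\mathcal{O}_{\mc{X}}(m)=0$ for all $m\ge 1$. I would then globalise the fibrewise resolutions: because the Betti numbers are constant, the minimal free resolutions of the $\mc{X}_x$ fit together into an exact complex of $\mathcal{O}_{\mathbb{P}}$-modules $0\to\mathcal{O}_{\mathbb{P}}(-D)\tensor\pi^{*}\mc{B}_{D-1}\to\cdots\to\mathcal{O}_{\mathbb{P}}(-2)\tensor\pi^{*}\mc{B}_{1}\to\mathcal{O}_{\mathbb{P}}\to\mathcal{O}_{\mc{X}}\to 0$ with $\mc{B}_{i}$ locally free on $B$ of rank $\beta_{i}$ (built inductively, $\mc{B}_{1}=\pi_{*}\mathcal{I}_{\mc{X}}(2)$, and so on).

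Twisting by $\mathcal{O}_{\mathbb{P}}(m)$ and applying $\pi_{*}$ — every term has vanishing higher direct images, and $\pi_{*}\mathcal{O}_{\mathbb{P}}(j)=\operatorname{Sym}^{j}\mathbb{E}$ for $j\ge 0$ and $=0$ for $-N\le j<0$ — gives, for each $m\ge 1$, an exact sequence of vector bundles on $B$: $0\to\operatorname{Sym}^{m-D}\mathbb{E}\tensor\mc{B}_{D-1}\to\cdots\to\operatorname{Sym}^{m-2}\mathbb{E}\tensor\mc{B}_{1}\to\operatorname{Sym}^{m}\mathbb{E}\to\mathbb{V}[m]^{*}\to 0$ (with $\operatorname{Sym}^{<0}:=0$). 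Taking the alternating sum of first Chern classes along this sequence, and using $c_{1}(\mc{F}\tensor\mc{G})=\operatorname{rk}\mc{G}\cdot c_{1}\mc{F}+\operatorname{rk}\mc{F}\cdot c_{1}\mc{G}$ together with $c_{1}(\operatorname{Sym}^{j}\mathbb{E})=\binom{j+N}{N+1}c_{1}(\mathbb{E})$ and $\operatorname{rk}\operatorname{Sym}^{j}\mathbb{E}=\binom{j+N}{N}$, yields $c_{1}(\mathbb{V}[m]^{*})=\sum_{i=0}^{D-1}R_{i}(m)\,c_{1}(\mc{B}_{i})$, where $\mc{B}_{0}:=\mathbb{E}$, $R_{0}(m)=\binom{m+N}{N+1}+\sum_{i=1}^{D-1}(-1)^{i}\beta_{i}\binom{m-i-1+N}{N+1}$ and $R_{i}(m)=(-1)^{i}\binom{m-i-1+N}{N}$ for $1\le i\le D-1$ are explicit polynomials in $m$. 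Evaluating at $m=1,\dots,D$: since $1\le i\le D-1\le N$, one has $R_{i}(j)=0$ for $1\le j\le i$ and $R_{i}(i+1)=(-1)^{i}\ne 0$, while $R_{0}(1)=\binom{N+1}{N+1}=1$; hence the $D\times D$ matrix $\bigl(R_{i}(j)\bigr)_{1\le j\le D,\ 0\le i\le D-1}$ is triangular with nonzero diagonal, so invertible over $\mathbb{Q}$. Inverting it writes each $c_{1}(\mc{B}_{i})$ as a $\mathbb{Q}$-linear combination of $c_{1}(\mathbb{V}[1]^{*}),\dots,c_{1}(\mathbb{V}[D]^{*})$; substituting back and using $c_{1}(\mathbb{V}[m])=-c_{1}(\mathbb{V}[m]^{*})$ gives $c_{1}(\mathbb{V}[m])=\sum_{j=1}^{D}\alpha_{j}(m)\,c_{1}(\mathbb{V}[j])$ with each $\alpha_{j}$ a polynomial in $m$.

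The main obstacle is the geometric content of the second paragraph: that $\underline{\operatorname{Proj}}_{B}\mc{A}$ really is a flat family of minimal-degree varieties with $\pi_{*}\mathcal{O}_{\mc{X}}(m)=\mathbb{V}[m]^{*}$ on the nose (no loss from non-saturation), and above all that the fibrewise linear resolutions glue to a global one with \emph{locally free} syzygy bundles $\mc{B}_{i}$. This is exactly where $\Delta=0$ is indispensable — it forces the fibres into the minimal-degree list and pins down their graded Betti numbers uniformly. Granting this, the last two paragraphs are formal: the Chern-class bookkeeping and the triangularity of $\bigl(R_{i}(j)\bigr)$ are elementary, and it is precisely the codimension $D-1$ of a minimal-degree variety that makes the recursion close up after the $D$ terms $c_{1}(\mathbb{V}[1]),\dots,c_{1}(\mathbb{V}[D])$.
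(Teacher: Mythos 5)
Your proposal is correct, and its skeleton is the same as the paper's proof of Theorem \ref{MainLemma}: both rest on the Eisenbud--Goto/Nagel $2$-linear resolution of a variety of minimal degree, with Betti numbers depending only on $D$ (the paper's sequence \eqref{mimic}, your $\beta_i=i\binom{D}{i+1}$), globalized over $\ovmc{M}_{g,n}$ into an exact sequence $0\to\mc{W}_D\tensor\op{Sym}^{m-D}\Bbb{V}\to\cdots\to\op{Sym}^m\Bbb{V}\to\Bbb{V}[m]^*\to 0$ with locally free syzygy bundles, followed by the same first-Chern-class bookkeeping and triangular inversion expressing everything through $c_1(\Bbb{V}[1]),\dots,c_1(\Bbb{V}[D])$. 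Where you genuinely diverge is the globalization step (the paper's Section \ref{recollement}): you construct the family $\mc{X}=\underline{\op{Proj}}_B\bigl(\bigoplus_m\Bbb{V}[m]^*\bigr)\subset\Bbb{P}(\Bbb{V}^*)$, prove flatness from constancy of fiberwise Hilbert polynomials over the reduced base, and push a relative resolution down by cohomology and base change; the paper never forms this family (in keeping with its emphasis that the hypothesis is pointwise) and instead glues the canonical graded resolutions of Lemma \ref{Canonical} by an inductive kernel construction directly on the sheaf of algebras, proving local freeness of the $\mc{W}_i$ fiber by fiber in Proposition \ref{VB}. Your route buys the extra conceptual fact that pointwise interpretations plus $\Delta$-invariant zero actually force a flat family, while the paper's route uses a lighter toolkit and makes transparent the remark that interpretations at generic points of boundary divisors already suffice. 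The only thin spot is the step you yourself flag: the assertion that constant Betti numbers make the fiberwise resolutions glue with locally free $\mc{B}_i$ is not automatic and is precisely the content of Proposition \ref{VB}; your recipe $\mc{B}_1=\pi_*\mc{I}_{\mc{X}}(2)$, iterated on the successive syzygy kernels, does close it, since the fiberwise vanishing of higher cohomology of the twisted syzygy sheaves (the paper's Remark \ref{vanish}) makes each push-forward locally free and compatible with base change, and flatness of $\mc{O}_{\mc{X}}$ identifies $\mc{I}_{\mc{X}}\tensor k(x)$ with the ideal sheaf of the fiber so that fiberwise surjectivity and exactness propagate.
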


In Example \ref{GoodBad}, and its generalization \ref{ForwardReference},  we give bundles  $\mathbb{V}$ for which Equation (\ref{MainEqI})  fails. Here we use formulas of \cite{Fakh} which compute the Chern classes of $\Bbb{V}$.

The remainder of the paper is devoted to supporting a conjectural description for  the locus $Z$ on $\ovmc{M}_{g,n}$ for which there are always geometric interpretations for conformal blocks  at every point $x \in Z$. Based on the many examples for $g\le 3$, including bundles that do not have $\Delta$-invariant zero rank scaling, we  make following assertion.

\begin{conjecture}\label{ExtendZ}
Given a vector bundle of conformal blocks $\mathbb{V}$ of type $\op{A}$ on $\overline{\mc{M}}_{g,n}$,  there are polarized pairs $(\mc{X}_x,\mc{L}_x)$ such that Equation \ref{isomI} holds for all $x \in Z= \mc{M}^{rt}_{g,n} \cup \Delta^0_{irr}$.  \end{conjecture}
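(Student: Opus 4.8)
The plan is to establish \eqref{isomI} over $Z$ by propagating the interior isomorphism of \cites{BeauvilleLaszlo,Faltings,KNR,Pauly,LaszloSorger} across nodes, using that both sides of \eqref{isomI} satisfy factorization in families. Concretely, I would argue by induction on the number of nodes of $x=(C;p_1,\dots,p_n)$: the base case $C$ smooth is exactly \eqref{isomI} in the interior, and for the inductive step it suffices --- by the family version of factorization over a boundary stratum --- to treat a single representative of each topological type occurring in $Z$. There are two such families of types: (a) $C$ with rational tails, whose dual graph is a tree with one genus-$g$ vertex and the remaining vertices of genus $0$; and (b) $C$ irreducible with exactly one node, necessarily non-separating. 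The induction terminates, in case (a), at the genus-$g$ component with extra marked points (classical) together with three-pointed $\mathbb{P}^1$'s, and in case (b), at the smooth normalization of genus $g-1$.

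For case (b): let $\nu\colon\widetilde C\to C$ be the normalization and $q_\pm=\nu^{-1}(\text{node})$, so $\widetilde C$ is smooth of genus $g-1$ with the $n+2$ marked points $p_\bullet,q_+,q_-$. On the conformal-blocks side, sewing gives $\mathbb{V}(C)\cong\bigoplus_\mu \mathbb{V}_\mu(\widetilde C)$, the sum over level-$\ell$ weights $\mu$ placed at $q_\pm$ (dually paired), compatibly with the scaling of Definition \ref{Stretching}. On the geometric side I would take $\mathcal{X}_x$ to be a compactified moduli space of parabolic bundles on $C$ --- equivalently, of generalized parabolic bundles on $\widetilde C$ (parabolic bundles on $\widetilde C$ with a gluing datum identifying the fibres at $q_+$ and $q_-$), following the degeneration constructions of Bhosle and of Narasimhan--Ramadas and Sun --- with $\mathcal{L}_x$ the natural theta bundle. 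The inductive content is then the identification $H^0(\mathcal{X}_x,\mathcal{L}_x^{\otimes m})\cong\bigoplus_\mu H^0(\mathcal{X}_{\widetilde x,\mu},\mathcal{L}_{\widetilde x,\mu}^{\otimes m})$, matching the conformal-block sewing term by term and respecting the grading; the dimension count here is essentially the degeneration statement already available, and what remains is to promote it to an isomorphism of graded algebras with the scaling grading corresponding to the tensor power of $\mathcal{L}_x$.

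For case (a): a rational tail meets the rest of $C$ along a separating node, so iterating separating-node factorization reduces $(C;p_\bullet)$ to the genus-$g$ component $C_0$ carrying $\le n$ of the $p_i$ plus some extra marked points --- where \eqref{isomI} is the classical interior statement --- tensored with conformal blocks associated to three-pointed $\mathbb{P}^1$'s, the vertices of the fully degenerated rational tree. For a three-pointed $\mathbb{P}^1$, the relevant $\mathcal{X}$ is a product of partial flag varieties --- the classical description of the moduli of parabolic $\mathrm{SL}_r$- or $\mathrm{GL}_r$-bundles on $\mathbb{P}^1$ with prescribed weights, via Hecke modifications of the trivial bundle --- with $\mathcal{L}$ the natural very ample bundle, for which the theta-versus-flag identification is explicit. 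Forming the fibre product of these pairs, with $\mathcal{L}_x$ twisted by the weights chosen at the separating nodes, produces the desired $(\mathcal{X}_x,\mathcal{L}_x)$; one must check that separating-node factorization of generalized theta functions holds as graded algebras, with the level-$\ell$ weights at each node simultaneously indexing the $H^0$-decomposition and the Künneth factors.

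The main obstacle, I expect, is precisely the separating-node factorization for generalized theta functions at the level of graded algebras, together with control of projectivity and ampleness: the non-separating case and the counting of dimensions are largely available, but the separating case with parabolic data at arbitrary level $\ell$, compatibly with the scaling operation (which must correspond to a Veronese-type operation on $(\mathcal{X}_x,\mathcal{L}_x)$), is what makes \eqref{isomI} --- not merely equality of dimensions --- delicate. A secondary point is to reconcile the conjecture with Theorem \ref{MainLemmaI}: for $x\in Z$ the pairs $(\mathcal{X}_x,\mathcal{L}_x)$ constructed above are in general not of minimal degree even when $\mathbb{V}$ has $\Delta$-invariant zero rank scaling at interior points --- a product of scrolls, or the moduli over a lower-genus curve times flag varieties, has strictly positive $\Delta$-invariant --- so $\dim\mathcal{X}_x$ and $\mathcal{L}_x^{\dim\mathcal{X}_x}$ differ from their interior values and the recursion \eqref{MainEqI} is no longer forced; the proof should exhibit this jump of $\Delta(\mathcal{X}_x,\mathcal{L}_x)$ along $Z$ explicitly, thereby showing the obstruction behind Theorem \ref{MainI} is confined to the complement of $Z$.
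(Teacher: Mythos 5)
The statement you are trying to prove is stated in the paper as Conjecture \ref{ExtendZ}; the paper offers no proof, only supporting evidence (the criteria of Theorem \ref{precisQ} under strong hypotheses such as quasi rank one factorization and $\Delta$-invariant zero socles, plus the examples and divisor-class computations of Sections 5--7). Your outline therefore cannot be compared to a proof in the paper, and as it stands it has genuine gaps at exactly the points the paper identifies as open. First, in your case (b) you take for granted the identification of $H^0$ of a moduli space of generalized parabolic bundles on the normalization (Bhosle, Narasimhan--Ramadas, Sun) with the conformal block at the nodal curve, compatibly with the algebra structure; the paper states explicitly that ``these factorizations of generalized theta functions have not been related to conformal blocks,'' and the dimension counts you invoke do not produce the canonical graded-algebra isomorphism \eqref{isomI}. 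Second, in both cases what factorization (Theorem \ref{Factorization}) and Manon's result give you is the \emph{associated graded} algebra $\widetilde{\mathcal{A}}_x$, whose degree-$m$ piece is a direct sum over weights $\mu\in P_{m\ell}$ --- an index set that grows with $m$ --- so a fixed fibre product of flag varieties and the interior moduli space of the genus-$g$ component cannot have the right section ring; and even after identifying $\widetilde{\mathcal{A}}_x$ with a section ring of some degenerate polarized variety, one must lift back to $\mathcal{A}_x$ itself. The paper does this (Lemma \ref{specialization}) only by exploiting cohomology vanishing available in the $\Delta$-invariant zero situation; in general the lift, and even the finite generation of $\mathcal{A}_x$ at boundary points (Remark \ref{integro}), is open, and your plan does not address either.

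Your closing ``reconciliation'' with Theorem \ref{MainLemmaI} is also incorrect. Since $\mathbb{V}[m]$ is a vector bundle, $\operatorname{rk}\mathbb{V}[m]$ is constant on $\ovmc{M}_{g,n}$, so any pair $(\mc{X}_x,\mc{L}_x)$ satisfying \eqref{isomI} at a point of $Z$ has the same Hilbert function as at interior points; its dimension, degree and $h^0$, hence $\Delta(\mc{X}_x,\mc{L}_x)$, are forced to agree, so the $\Delta$-invariant cannot ``jump'' along $Z$ and a product of scrolls or of a lower-genus moduli space with flag varieties of positive $\Delta$-invariant is excluded from the start (this is precisely the remark following Theorem \ref{MainLemma}). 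The actual consistency of Conjecture \ref{ExtendZ} with Theorem \ref{MainI} is different: the counterexamples of Examples \ref{GoodBad} and \ref{ForwardReference} arise from boundary divisors such as $\Delta_{1,\emptyset}$ whose generic points lie \emph{outside} $Z$, while along $\Delta_{irr}$ the predicted identity does hold and geometric interpretations do extend (Example \ref{GoodBad2}, where $\alpha(m)=0$). So rather than exhibiting a jump of the $\Delta$-invariant on $Z$, a correct argument must show the obstruction is supported on the strata excluded from $Z$, which is what the paper's evidence suggests but does not prove.
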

 In the statement of Conjecture \ref{ExtendZ}, the notation $\mc{M}^{rt}_{g,n}$ stands for the set of points in $\ovmc{M}_{g,n}$ corresponding to stable curves in the locus of rational tails.  These are curves with one irreducible component having genus $g$. By $\Delta^0_{irr}$ we mean the set of curves in the interior of the component of the boundary whose generic point has a non-separating node.  We note that for $g=0$, the locus $Z$ is equal to $\ovmc{M}_{0,n}$.

  In Section \ref{HigherGenusEvidence} we illustrate how our techniques are not limited to bundles of conformal blocks with $\Delta$-invariant rank zero, and can be applied as long as one has sufficient information about  polarized pairs $(\mc{X}_x, \mathcal{L}_x)$ at points $x$ on the interior $\mc{M}_{g,n}$. First, we treat the case where $\mc{X}_x$ is embedded by $\mc{L}_x$ as Coble's quartic hypersurface $Z_4$ in $\mathbb{P}^7$.  Second, we consider a cubic hypersurface also studied by Coble: $\mathcal{Z}_3 \hookrightarrow \mathbb{P}^8=\mathbb{P}(\op{H}^0(J^{g-1}C, \mathcal{O}_{JC}(3\theta))$,  for $C$ a smooth curve of genus $2$.  It is known
that  $\mathbb{V}(\sL_3,1)|^*_{[C]}\cong \op{H}^0(SU_{C}(3), \mathcal{L})$, where $SU_{C}(3)$ is a degree $2$ cover of  $\Bbb{P}^8$ branched over a sextic  dual to $\mathcal{Z}_3$ \cite{Ortega,nguyen}.  Finally, we consider an intersection of two quadrics in $\mathbb{P}^5$ using the explicit formulas for first Chern classes in genus $>1$ given in \cite{MOP}.


\subsection{History}\label{History}
Our results can be seen to originate from the Verlinde formula, which gives a closed expression for the dimension of spaces of generalized theta functions.
Although there are finite dimensional proofs in special cases, see \cites{Ber2, Thaddeus, Zagier}, the theory of conformal blocks plays an
 essential role in the general result.   We recommend the survey article of \cite{sorger} for a good account.

While  admittedly  leaving out important contributions, one can summarize the two crucial aspects:
\begin{enumerate}
\item There is a canonical identification between conformal blocks
and generalized theta functions  over smooth curves \cites{BeauvilleLaszlo,  Faltings, KNR,  Pauly, LaszloSorger}.
\item The factorization theorem of Tsuchiya-Ueno-Yamada (see Theorem \ref{Factorization}) allows for a decomposition of conformal blocks at stable pointed curves
$x \in \ovmc{M}_{g,n} \setminus \mc{M}_{g,n}$.
\end{enumerate}

Several interesting questions arising from the above picture have been investigated. One theme, originating in \cites{NR2,DW,Ramadas},
is to try to remove the reference to conformal blocks.  The idea is that if the space of generalized theta functions could be factorized geometrically, then one would obtain a ``finite dimensional" proof of the Verlinde formula. The goal is then to
find pairs $(\mc{X}_x,\mathcal{L}_x)$ for  points $x \in \ovmc{M}_{g,n} \setminus \mc{M}_{g,n}$ such that global sections  have suitable factorization properties. This motif has been continued in the work of Kausz and Sun \cite{Su1,Su2,Kau}. To the best of our knowledge, these factorizations of generalized theta functions have not been related to conformal blocks, also see  \cite{Faltings2,Teleman}.

Our outlook is to suppose that  there is an extension of the identification of conformal blocks and generalized thetas at points on  the boundary, and to ask what consequences may follow.  In type A, there are usually no descent problems, and so it seems reasonable to ask for an interpretation over moduli spaces rather than stacks.    Projective varieties of minimal degree have the advantage that they will degenerate to other projective varieties of minimal degree.  The unique resolutions associated to their ideal sheaves is our main tool. We also find, in special cases, geometric interpretations over the boundary by studying factorization properties of  conformal blocks, and their algebras \cites{TUY, Manon}.

\begin{remark}
There has been recent work on realizing the first Chern classes of conformal blocks over $\overline{M}_{0,n}$ (at a fixed level) geometrically as pull backs of classes under natural morphisms \cite{Fakh, Gi, AGS, Max, GiansiracusaGibney,GJMS,BoG}. The problem considered here in our paper, precisely stated in Question \ref{TheBigOne}, is about realizing the conformal blocks themselves geometrically as sections of ample line bundles on polarized varieties (for a fixed curve).  These problems are distinct from one another.
\end{remark}



\bigskip

\textit{Acknowledgments.}
P.B. was supported on NSF grant  DMS-0901249, and A.G. on  DMS-1201268 and  in part by  DMS-1344994 (RTG in Algebra, Algebraic Geometry, and Number Theory, at UGA).
We thank the anonymous referee for their time and thoughtful feedback.

 \section{Notation and basic definitions}
 \subsection{Conformal blocks}
For  a positive integer  $\ell$ (called the level), we let ${P}_{\ell}(\sL_{r+1})$ denote the set of dominant integral weights $\lambda$ with $(\lambda,\theta)\leq \ell$.  Here $\theta$ is the highest root, and $(\ ,\ )$ is the Killing form, normalized so that $(\theta,\theta)=2$.  To a triple $(\sL_{r+1},\vec{\lambda},\ell)$, such that $\vec{\lambda} \in P_{\ell}(\sL_{r+1})^n$, there  corresponds a vector bundle
$\mathbb{V}$ of conformal blocks  on the stack $\overline{\mathcal{M}}_{g,n}$  \cite{TUY, Fakh}. Throughout the paper, as our notation is fixed, we usually just refer to such bundles as $\mathbb{V}$.
We recommend the Bourbaki article of Sorger ~\cite{sorger} and the survey article of Beauville \cite{Beauville} for some of the background on conformal blocks.  By writing $\mathbb{V}|^*_x \cong {H}^0(\mc{X}_x,\mc{L}_x)$,  for  $x\in \mc{M}_{g,n}$, we mean to indicate that $\mc{X}_x= \mc{X}_x(\sL_{r+1},\vec{\lambda},\ell)$
and $\mathcal{L}_x= \mathcal{L}_x(\sL_{r+1},\vec{\lambda},\ell)$.

\begin{definition}Given a weight $\mu \in \mathcal{P}_{\ell}(\mathfrak{g})$, let $V_{\mu}$ denote the corresponding irreducible representation of $\mathfrak{g}$.  By $\mu^{\star} \in \mathcal{P}_{\ell}(\mathfrak{g})$\index{$\mu^{\star}$} we mean the highest weight in   $(V_{\mu})^*$.
\end{definition}

\begin{theorem}\cite[Factorization]{TUY}\label{Factorization} \ Let $(C_0; \vec{p})$ be a stable $n$-pointed curve of genus $g$ where $C_0$ has a node $x_0$.
\begin{enumerate}
\item If $x_o$ is a non-separating node, $\nu: C\to C_0$  the normalization of $C_0$ at $x_0$, and $\nu^{-1}(x_0)=\{x_{1},x_2\}$,  then
$$\mathbb{V}(\mathfrak{g}, \vec{\lambda}, \ell)|_{(C_0; \vec{p})} \cong \bigoplus_{\mu \in \mathcal{P}_{\ell}(\mathfrak{g})} \mathbb{V}(\mathfrak{g}, \vec{\lambda}\cup \mu \cup \mu^{\star}\}, \ell)_{(C; \vec{p}\cup \{x_1,x_2\})}.$$
\item If   $x_0$ is separating, $\nu: C_1 \cup C_2 \to C_0$ the normalization at $x_0$,  $\nu^{-1}(x_0)=\{x_{1},x_2\}$, with $x_i \in C_i$, then
$$\mathbb{V}(\mathfrak{g}, \vec{\lambda}, \ell)|_{(C_0; \vec{p})}
\cong \bigoplus_{\mu \in \mathcal{P}_{\ell}(\mathfrak{g})} \mathbb{V}(\mathfrak{g}, \lambda(C_1) \cup \{\mu\}, \ell)|_{(C_1; \{p_i \in C_1\}\cup \{x_1\})}\tensor \mathbb{V}(\mathfrak{g},\lambda(C_2)\cup \{\mu^{\star}\}, \ell)|_{(C_2; \{p_i \in C_2\}\cup \{x_2\})},$$
\noindent
where $\lambda(C_i)=\{\lambda_j | p_j \in C_i\}$.
\end{enumerate}
\end{theorem}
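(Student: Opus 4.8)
The plan is to prove part~(1) in detail, then record the parallel (and slightly easier) changes for part~(2). Recall that $\mathbb{V}(\mathfrak g,\vec\lambda,\ell)|_{(C;\vec p)}$ is the space of covacua $\mathcal{H}_{\vec\lambda}/\mathfrak g(C;\vec p)\cdot\mathcal{H}_{\vec\lambda}$, where $\mathcal{H}_{\vec\lambda}=\mathcal{H}_{\lambda_1}\otimes\cdots\otimes\mathcal{H}_{\lambda_n}$ is the tensor product of the integrable level-$\ell$ highest-weight $\hat{\mathfrak g}$-modules and $\mathfrak g(C;\vec p)=\mathfrak g\otimes\Gamma(C\setminus\{p_1,\ldots,p_n\},\mathcal O_C)$ acts by Laurent expansion at the $p_i$; the action is well defined at level $\ell$ because the central cocycle $\sum_i\Res_{p_i}(g\,df)\,(X,Y)$ vanishes by the residue theorem on the complete curve. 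The dual $\mathbb{V}^{\dagger}=\Hom_{\mathfrak g(C;\vec p)}(\mathcal{H}_{\vec\lambda},\mathbb C)$ is the space of vacua. For $C_0$ one uses the same definition, noting that via $\nu^*$ the algebra $\Gamma(C_0\setminus\vec p,\mathcal O_{C_0})$ is the subalgebra of $\Gamma(C\setminus(\vec p\cup\{x_1,x_2\}),\mathcal O_C)$ of functions whose values at $x_1$ and $x_2$ agree (and similarly for part~(2)).

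The isomorphism will come from a canonical ``propagator'' at the node. Using the identification $V_{\mu^\star}=(V_\mu)^*$ there is a canonical $\hat{\mathfrak g}$-invariant pairing $\langle-,-\rangle_\mu\colon\mathcal{H}_\mu\otimes\mathcal{H}_{\mu^\star}\to\mathbb C$ --- the one exhibiting $\mathcal{H}_{\mu^\star}$ as the graded level-$\ell$ restricted dual of $\mathcal{H}_\mu$, so that $\langle X[k]w,w'\rangle_\mu=-\langle w,X[-k]w'\rangle_\mu$ --- and a distinguished element $\mathbb I_\mu$ ``$=\sum_\alpha e_\alpha\otimes e^\alpha$'' of the completion $\mathcal{H}_\mu\,\hat\otimes\,\mathcal{H}_{\mu^\star}$ implementing the identity. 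Contracting the $\mathcal{H}_\mu\otimes\mathcal{H}_{\mu^\star}$ factors against $\langle-,-\rangle_\mu$ (equivalently, inserting $\mathbb I_\mu$ at the node on the vacuum side) gives, for each $\mu\in P_\ell(\mathfrak g)$, a natural map
\[
\bar\iota_\mu\colon\ \mathbb{V}(\mathfrak g,\vec\lambda\cup\mu\cup\mu^\star,\ell)|_{(C;\vec p\cup\{x_1,x_2\})}\ \longrightarrow\ \mathbb{V}(\mathfrak g,\vec\lambda,\ell)|_{(C_0;\vec p)}.
\]
The first step is to check that $\bar\iota_\mu$ is well defined: for $X\otimes f$ with $f$ regular on $C_0$ away from $\vec p$, the part of its action landing on the factors $\mathcal{H}_{\lambda_i}$ is exactly the $\mathfrak g(C_0;\vec p)$-action on $\mathcal{H}_{\vec\lambda}$, which dies in the covacua of $C_0$, while the parts landing on $\mathcal{H}_\mu$ and on $\mathcal{H}_{\mu^\star}$ are intertwined by $\langle-,-\rangle_\mu$ and cancel against each other once one uses the matching condition at the node together with the smoothing relation $st=q$ near $x_0$ (the $q$-weighting built into $\mathbb I_\mu$ is precisely what makes this cancellation work). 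This local computation, together with the residue theorem, is the technical core of the construction.

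Next I would show $\sum_\mu\bar\iota_\mu$ is surjective: every covacuum class on $C_0$ has a representative in the ground floor $V_{\lambda_1}\otimes\cdots\otimes V_{\lambda_n}$ (descendants at the $p_i$ being redundant by propagation of vacua, $\mathbb{V}(\mathfrak g,\vec\lambda,\ell)|_{(C;\vec p)}\cong\mathbb{V}(\mathfrak g,\vec\lambda\cup 0,\ell)|_{(C;\vec p\cup\{q\})}$), and filtering $\mathcal{H}_{\vec\lambda}$ by the order of pole allowed at the node presents $\mathbb{V}(\mathfrak g,\vec\lambda,\ell)|_{C_0}$ as the covacua of $\mathcal{H}_{\vec\lambda}$ with the free $\hat{\mathfrak g}_{x_0}$-module, cut down to level $\ell$, inserted at the node; this module decomposes as $\bigoplus_{\mu\in P_\ell(\mathfrak g)}\mathcal{H}_\mu\otimes\mathcal{H}_{\mu^\star}$, which in particular forces the sum to be finite. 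Injectivity is the step I expect to be the real obstacle, and I would obtain it globally: conformal blocks form a coherent sheaf on the moduli of $n$-pointed curves acquiring a node, carrying a flat projective (Knizhnik--Zamolodchikov--Bernard) connection with logarithmic poles along that boundary divisor, and it is locally free there; local freeness makes $\dim\mathbb{V}|_{C_0}$ equal the rank on a nearby smooth curve, while the sewing series --- insert $\mathbb I_\mu$ at the node and sum over the smoothing parameter $q$ --- produces, for each $\mu$, a flat family of covacua on the smooth curves degenerating to $\bar\iota_\mu$, so a dimension count turns the surjection into an isomorphism. (Alternatively one builds a left inverse of each $\bar\iota_\mu$ directly from propagation of vacua and the node intertwiner, avoiding the connection.)

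For part~(2) the construction is identical with $\mathcal{H}_\mu$ inserted on $C_1$ and $\mathcal{H}_{\mu^\star}$ on $C_2$; well-definedness is if anything simpler (the cross term vanishes by $\hat{\mathfrak g}$-invariance of $\langle-,-\rangle_\mu$ and the single matching value $f(x_0)$), and surjectivity and injectivity go through verbatim --- or one deduces~(2) from~(1) by specializing a separating node inside a family that also carries a non-separating one. As indicated, the genuine difficulty throughout is the local-freeness statement near the boundary divisor --- equivalently, the convergence of the sewing series and the existence of the flat connection with the correct logarithmic structure --- which is the technical heart of \cite{TUY}.
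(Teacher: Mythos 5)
You should first note that the paper does not prove Theorem \ref{Factorization} at all: it is quoted from \cite{TUY} (see also the expositions cited as \cite{Beauville} and \cite{sorger}), so your sketch is being measured against the argument of the cited source rather than anything in this paper. Against that argument, your proposal misplaces the technical core. Factorization is a statement about the \emph{fixed} nodal curve; no smoothing parameter appears, so the ``smoothing relation $st=q$'' and a $q$-weighted propagator $\mathbb{I}_\mu$ in the completion $\mathcal{H}_\mu\hat\otimes\mathcal{H}_{\mu^\star}$ have no role in defining the gluing maps, and invoking them imports exactly the convergence problems of the sewing construction that the statement does not require. In the actual proof one never contracts against the full completed canonical element: propagation of vacua reduces the insertions at the two branch points to the finite-dimensional top pieces $V_\mu$ and $V_{\mu^\star}$, and an element $X\otimes f$ of $\mathfrak{g}\otimes\Gamma(C_0\setminus\vec{p},\mathcal{O}_{C_0})$ acts on those factors only through the single value $f(x_1)=f(x_2)$, so well-definedness of the contraction map is immediate from $\mathfrak{g}$-invariance of the pairing $V_\mu\otimes V_{\mu^\star}\to\mathbb{C}$ --- no cancellation of infinite tails, no residue-theorem subtlety beyond what is already used to define the action on the nodal curve.

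The second gap is the injectivity step. You propose to get it from local freeness of the sheaf of conformal blocks along the boundary divisor together with the sewing series, but in the standard development (TUY, and the Beauville/Ueno expositions) the logical order is the reverse: factorization at the nodal curve is proved first, by a purely algebraic computation --- degree-zero Lie algebra homology of $\mathcal{H}_{\vec{\lambda}}$ under the nodal current algebra, computed in two stages (functions vanishing at the node, then the residual copy of $\mathfrak{g}$ acting by evaluation), combined with the level-$\ell$ structure theorem that the relevant integrable bimodule decomposes as $\bigoplus_{\mu\in P_{\ell}(\mathfrak{g})}\mathcal{H}_\mu\otimes\mathcal{H}_{\mu^\star}$ --- and this computation gives injectivity and surjectivity simultaneously. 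Local freeness across the boundary, the flat logarithmic connection, and the equality of the nodal rank with the rank on a nearby smooth curve are then \emph{consequences} proved using factorization (plus the convergent sewing series), so using them as inputs begs the question unless you independently establish the full sewing theorem, which is far harder than the fixed-curve statement you are after. Your parenthetical alternative --- a left inverse built from propagation of vacua and the finite-dimensional node intertwiner --- is essentially the cited proof and is the route you should take; as written, the primary argument has both a misidentified well-definedness mechanism and a circular injectivity step.
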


\begin{definition}\label{RestrictionData}
The collection of weights $\{(\mu,\mu^*) : \mu \in \mathcal{P}_{\ell}(\mathfrak{g})\}$ which appear in Theorem \ref{Factorization}, and which give nonzero summands are called the {\textbf{restriction data}} for a bundle $\mathbb{V}(\mathfrak{g}, \vec{\lambda}, \ell)$
at $(C_0; \vec{p})$.
\end{definition}

 In many of the examples and computations done here, we find the ranks of conformal blocks bundles  using the  cohomological form of Witten's Dictionary, which expresses these ranks as the intersection numbers  of particular classes (depending on the bundle) in the small quantum cohomology ring of certain Grassmannian varieties.  

\begin{theorem}\cite[Cohomological version of Witten's Dictionary]{b4}\label{WD} \ For $\sum |\lambda_i| =(r+1)(\ell +s)$.
\begin{itemize}
\item If $s<0$, then the rank of $\mathbb{V}(\sL_{r+1}, \vec{\lambda}, \ell)$ coincides with the rank of classical coinvariants $\mathbb{A}(\sL_{r+1}, \vec{\lambda})$.
\item If $s\geq 0$, let $\lambda=(\ell,0,\dots,0)$. The rank of $\mathbb{V}(\sL_{r+1}, \vec{\lambda}, \ell)$ is the coefficient of $q^{s}[\operatorname{pt}]=q^s\sigma_{(\ell,\ell,\dots,\ell)}$  in
$$\sigma_{\lambda_1}\star\dots\star\sigma_{\lambda_n}\star\sigma_{\lambda}^{s}\in QH^*(Y),\  Y=\Gr (r+1, r+1+\ell),$$
where  $\sigma_{\lambda}^{s}$ is the $s$-fold quantum $\star$ product of $\sigma_{\lambda}$.
One can write the above multiplicity also as the coefficient of $q^s\sigma_{\lambda^c_n}$ ($\lambda^c_n$ is the complement of $\lambda_n$ in a $(r+1)\times \ell$ box) in
$$\sigma_{\lambda_1}\star\sigma_{\lambda_2}\star\dots\star\sigma_{\lambda_{n-1}}\star\sigma_{\lambda}^{s}\in QH^*(Y).$$
\end{itemize}
\end{theorem}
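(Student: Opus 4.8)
The plan is to reduce the statement to structure constants of the level-$\ell$ fusion ring of $\sL_{r+1}$ and then to identify those with quantum intersection numbers on $Y=\Gr(r+1,r+1+\ell)$; this identification is exactly Witten's dictionary, and its combinatorial core is the correspondence between affine Weyl corrections and rim-hook removals. We work in genus $0$, where the dictionary lives: $\operatorname{rank}\mathbb{V}(\sL_{r+1},\vec\lambda,\ell)=\dim\mathbb{V}(\sL_{r+1},\vec\lambda,\ell)|_x$ for any $n$-pointed rational curve $x$. First I would degenerate $(\mathbb{P}^1;\vec p)$ to a chain of three-pointed rational curves and apply Factorization (Theorem~\ref{Factorization}) together with propagation of vacua, obtaining
\[
\operatorname{rank}\mathbb{V}(\sL_{r+1},\vec\lambda,\ell)=N_{\vec\lambda}:=\bigl(\text{coefficient of }[0]\text{ in }[\lambda_1]\star\cdots\star[\lambda_n]\bigr)
\]
in the fusion ring $R_\ell(\sL_{r+1})$; by the nondegenerate pairing $([\mu],[\nu])\mapsto\delta_{\mu,\nu^\star}$ this also equals the coefficient of $[\lambda_n^\star]$ in $[\lambda_1]\star\cdots\star[\lambda_{n-1}]$, which is why the answer has two forms. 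The identical argument in the ordinary representation ring gives $\operatorname{rank}\mathbb{A}(\sL_{r+1},\vec\lambda)=\dim\bigl(V_{\lambda_1}\otimes\cdots\otimes V_{\lambda_n}\bigr)^{\sL_{r+1}}$, which is the Littlewood--Richardson number $c^{(t^{r+1})}_{\lambda_1\cdots\lambda_n}$ with $t=\tfrac{1}{r+1}\sum|\lambda_i|$ when $(r+1)\mid\sum|\lambda_i|$, and is $0$ otherwise.

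For the first bullet I would expand $N_{\vec\lambda}$ by the Kac--Walton formula, $N_{\vec\lambda}=\sum_{\widehat w}\epsilon(\widehat w)\,c^{\overline{\widehat w\cdot 0}}_{\lambda_1\cdots\lambda_n}$, summed over the affine Weyl elements with $\widehat w\cdot 0$ dominant. The term $\widehat w=1$ is the classical invariant just described; every other dominant $\widehat w\cdot 0$ has size at least $(r+1)(\ell+1)$ (a one-line computation with the affine dot-action, $s_0\cdot 0=(\ell+1)\theta$, whose $\sL_{r+1}$-dominant representative has this size), so it cannot occur in $\bigotimes V_{\lambda_i}$ once $\sum|\lambda_i|=(r+1)(\ell+s)$ with $s<0$. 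Hence the truncation is inoperative and $N_{\vec\lambda}=\operatorname{rank}\mathbb{A}(\sL_{r+1},\vec\lambda)$.

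For $s\ge 0$ the number $\sum|\lambda_i|=(r+1)(\ell+s)$ can exceed $\dim Y=(r+1)\ell$, so the affine Weyl, equivalently quantum, corrections are genuinely present; the point is that the Kac--Walton expansion of $N_{\vec\lambda}$ and the Bertram--Ciocan-Fontanine--Fulton rim-hook expansion of the quantum intersection number on $Y$ are the \emph{same} signed sum of classical Littlewood--Richardson numbers. Concretely, one computes $\sigma_{\lambda_1}\star\cdots\star\sigma_{\lambda_n}\star\sigma_\lambda^{s}$ by forming the Schur-function product $s_{\lambda_1}\cdots s_{\lambda_n}\,s_{(\ell)}^{s}$ (here $\sigma_\lambda=\sigma_{(\ell)}=s_{(\ell)}$, the class of a single row of length $\ell$) and then reducing each resulting partition into the $(r+1)\times\ell$ box by successively removing $(r+1+\ell)$-rim hooks, each removal contributing a factor $q$ with sign dictated by its height; the terms relevant to $q^{s}\sigma_{(\ell,\dots,\ell)}$ are precisely those reduced to the full rectangle by $s$ such removals, which is what forces the $s$ extra factors $s_{(\ell)}$ (a degree count: a partition of size $\sum|\lambda_i|+s\ell$ needs exactly $s$ removals to land on a rectangle of size $(r+1)\ell$). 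On the other side the same partitions appear in $\bigotimes V_{\lambda_i}$, twisted by the one-dimensional character making the total $\mathfrak{gl}$-weight a rectangle, and the dot-action of $\widehat W$ realizes the same reductions with the same signs --- this is the dictionary ``affine reflection $\leftrightarrow$ $n$-rim hook.'' Matching the two expansions term by term gives the asserted formula for $s\ge 0$, and the $(n-1)$-fold reformulation then follows from the fusion-ring pairing above together with the fact that $\lambda_n^c$ and $\lambda_n^\star$ differ by a multiple of the determinant.

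The step I expect to be the real obstacle is this last term-by-term matching: producing the sign-respecting bijection between the affine Weyl corrections and the rim-hook removals, equivalently establishing the ring isomorphism $R_\ell(\sL_{r+1})\cong QH^*(Y)/(\sigma_{1^{r+1}}-1)$ under which $q$ maps to $\sigma_\lambda$. An alternative --- and the form in which \cite{b4} and its predecessors (Gepner, Witten, Agnihotri) obtain the dictionary --- is to compare the two closed ``sum over $(r+1+\ell)$-th roots of unity'' formulas, the Verlinde formula for $N_{\vec\lambda}$ against the Vafa--Intriligator formula for the Gromov--Witten number, and to verify that the extra $q^{s}\sigma_\lambda^{s}$ contributes exactly the power of $\prod_j\zeta_j$ that reconciles the $\mathfrak{gl}$- and $\sL$-normalizations, turning the problem into an evaluation of symmetric functions at roots of unity. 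Everything else is degree bookkeeping and Poincar\'e duality on $\Gr(r+1,r+1+\ell)$.
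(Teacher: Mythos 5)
This statement is not proved in the paper at all: Theorem~\ref{WD} is quoted from \cite{b4}, and the only justification offered is the remark that the relation to quantum cohomology follows from \cite{witten} together with the twisting procedure of \cite{b4} (Eq.\ (3.10) there). Your sketch follows essentially that same route---factorization/propagation of vacua reduces the rank to a fusion coefficient, the Kac--Walton degree bound handles $s<0$, and for $s\ge 0$ the $s$ extra insertions of $\sigma_{\lambda}$ with the $q^{s}$ bookkeeping are exactly the twisting that converts the $\sL_{r+1}$ count into a quantum number on $\Gr(r+1,r+1+\ell)$---and, like the paper, you ultimately defer the core fusion-ring/quantum-cohomology identification (rim-hook versus affine Weyl corrections, or Verlinde versus Vafa--Intriligator) to the same cited sources, so your proposal is consistent with the paper's treatment.
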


  The relation to quantum cohomology follows from \cite{witten} and the twisting procedure of \cite{b4}, see Eq (3.10) from \cite{b4}.
Examples of such rank computations were done using Witten's Dictionary in \cites{BGMB, Kaz, BGMA}, and \cite{Hobson}.

\subsection{$\Bbb{V}[m]$ and the algebra of conformal blocks}\label{ManonAlgebra}

\begin{definition}\label{Stretching}
For $\lambda_i=\sum_{j=1}^rc_j \omega_j  \in P_{\ell}(\sL_{r+1})$, and $m\in \mathbb{N}$, set $m\lambda_i=\sum_{j=1}^r (m  c_j ) \omega_j \in P_{m \ell}(\sL_{r+1})$.
 Given $\mathbb{V}=\mathbb{V}(\sL_{r+1},\vec{\lambda},\ell)$, set
$$\Bbb{V}[m]=\mathbb{V}(\sL_{r+1},m\vec{\lambda}, m \ell),$$ where $m\vec{\lambda}=(m\lambda_1,\ldots,m\lambda_n) \in P_{m\ell}(\sL_{r+1})^n$.
We often refer to the new bundles $\Bbb{V}[m]$ as {\bf{multiples}} of $\mathbb{V}$, and we say they are obtained by {\bf{scaling}} the Lie data used to form $\mathbb{V}$.
\end{definition}

Using the $\Bbb{V}[m]$, one can form a flat sheaf of algebras (see \cite[p.~368]{Faltings}, \cite{Manon}):
\begin{equation}\label{Algebra}
 \mathcal{A}=\bigoplus_{m \in \mathbb{Z}_{\ge 0}}\mathcal{A}_m=\bigoplus_{m\geq 0}\  \Bbb{V}[m]^*,
 \end{equation}
over $\ovmc{M}_{g,n}$.    At any {\em{interior}} point $x\in \mc{M}_{g,n}$,  one has a natural identification  of  graded algebras
 \begin{equation}\label{isom}
 \mathcal{A}|_x =\bigoplus_{m \in \mathbb{Z}_{\ge 0}}(\mathcal{A}_m)|_x=\bigoplus_{m\geq 0}\  \Bbb{V}[m]^*|_{x}\cong \bigoplus_{m\ge 0} {H}^0(\mc{X}_x,\mc{L}_x^{\tensor m}).
 \end{equation}
Here, for $x=(C,p_1,\dots,p_n)\in \mc{M}_{g,n}$, one has that $\mc{X}_x= \mc{X}_x(\sL_{r+1},\vec{\lambda},\ell)$ is a moduli space, constructed using GIT, of  parabolic bundles of rank $r+1$ with trivial determinant on the curve $C$ with parabolic structures at $p_1,\dots,p_n$, and $\mathcal{L}_x= \mathcal{L}_x(\sL_{r+1},\vec{\lambda},\ell)$ an ample line bundle.

We ask if a similar description for  the algebra of conformal blocks holds at points $x\in \ovmc{M}_{g,n} \setminus \mc{M}_{g,n}$:

\begin{question}\label{TheBigOne}Given a vector bundle of conformal blocks $\mathbb{V}$ on $\ovmc{M}_{g,n}$ in type A,
 and $x\in \ovmc{M}_{g,n} \setminus \mc{M}_{g,n}$ is there a polarized scheme $(\mc{X}_x,\mathcal{L}_x)$ such that
Eq  \eqref{isom} holds as graded algebras?
 \end{question}

 \begin{definition}\label{GeometricInterpretation}
Given a vector bundle of conformal blocks $\mathbb{V}$ on $\ovmc{M}_{g,n}$, if the answer to Question \ref{TheBigOne} is yes for $\mathbb{V}$, then we will say that $\mathbb{V}$ has {\textbf{geometric interpretations}} at boundary points $x \in \ovmc{M}_{g,n}\setminus \mc{M}_{g,n}$.
\end{definition}

\begin{remark}\label{integro}
\begin{enumerate}
\item  We note that $\mathcal{A}_x$ for  $x\in \ovmc{M}_{g,n}$ is an integral domain, since it  is a subalgebra (formed by suitable Lie-algebra invariants) of  the algebra of sections of a line bundle on the ind-integral affine Grassmannian times an $n$-fold product of complete flag varieties (see e.g., \cite[Section 10]{LaszloSorger}). Therefore, the $\mc{X}_x$ are necessarily irreducible varieties when they exist.
\item
 The algebras $\mathcal{A}_x$ are finitely generated for $x\in \mc{M}_{g,n}$, since the coordinate ring of a polarized variety is finitely generated.  We do not know if  the $\mathcal{A}_x$ are finitely generated for $x\in \ovmc{M}_{g,n}\setminus \mc{M}_{g,n}$.
 If $\mathcal{A}_x$ is finitely generated for $x\in \ovmc{M}_{g,n}\setminus \mc{M}_{g,n}$ then, $\mc{X}_x$ (if it exists) will coincide with $\operatorname{Proj}(\mathcal{A}_x)$. However $\operatorname{Proj}(\mathcal{A}_x)$ need not carry an ample  line bundle $\mathcal{L}_x$ whose section ring equals $\mathcal{A}_x$.
 \end{enumerate}
\end{remark}

Note that by Remark \ref{integro}, such $\mc{X}_x$ (if they exist) are necessarily reduced and irreducible.  Moreover, in this question we do not require that $(\mc{X}_x,\mathcal{L}_x)$ fit together into a flat family.
Question \ref{TheBigOne} is a  point-wise version of the  question of existence of a family $\pi:\mathcal{X}\to\ovmc{M}_{g,n}$ with relatively ample bundles $\ml$ such that $ \pi_*\ml^{\otimes m}=\Bbb{V}[m]^*$ consistent with multiplication operations. One could then hope to (recursively) control the Chern classes of $\Bbb{V}[m]$ by applications of
the Grothendieck-Riemann-Roch formula. In the situation when $\Delta(\mc{X}_x,\mathcal{L}_x)=0$ for $x\in \mc{M}_{g,n}$ (described next in Def \ref{DeltaGenus}), the point-wise question is implied by the family question by cohomology vanishing (\cite[Chapter 1, (5.1)]{fujita} and  Remark \ref{vanish}).

\subsection{The \texorpdfstring{$\Delta$}{Delta}--invariant and projective varieties of minimal degree}\label{Delta}

\begin{definition}\label{DeltaGenus}  Let $X$ be an  irreducible projective variety, and $L$ an ample line bundle on $X$.
\begin{enumerate}
\item The self intersection  $L^{\op{dim}(X)}$ is $\op{dim}(X)!$ times the leading coefficient of the Hilbert polynomial
$f(m)=\chi(X,L^{\tensor m})$ (see \cite[3.11]{fujita}, for example). The self intersection $D=L^{\op{dim(X)}}$ is also denoted  called the degree of the polarized variety $(X,L)$.

\item The $\Delta$-invariant of the polarized variety $(X,L)$ is given by the formula
$$\Delta(X,L)= \op{dim}(X) +L^{\op{dim(X)}} - {h}^0(X,L).$$
We refer to $\Delta(X,L)$ as the {\bf{$\Delta$-genus}} or as the {\bf{$\Delta$-invariant}} of the pair $(X,L)$.
\end{enumerate}
\end{definition}

We note that by \cite[Chapter 1, (4.2), (4.12)]{fujita}, and  \cite[ Theorem 3.1.1]{BeltraSom}, $\Delta(X,L)\geq 0$, and if $\Delta(X,L)=0$, then $\op{H}^0(X,L)$ generates the algebra $\bigoplus_{m\geq0} {H}^0(X,L^{\otimes m})$ and hence $L$ is very ample, giving rise to an embedding  into a projective space
$$X\hookrightarrow \op{Proj}(\op{B}_{\bullet})= \mathbb{P}^N, \ \ \op{B}_{\bullet}=\bigoplus_{m \in \mathbb{Z}_{\ge 0}} \op{Sym}^m({H}^0(X,L)),$$
so that its image is a (nondegenerate) variety of degree equal to $L^{\op{dim}(X)}=1+\op{codim}(X)$.

\begin{definition}
A nondegenerate variety $\op{X}\subset \mathbb{P}^N$ always has $\op{deg}(\op{X})\ge 1+\op{codim}(\op{X})$, and is said to be  {\bf{of minimal degree}} if $\op{deg}(\op{X})=1+\op{codim}(\op{X})$.
\end{definition}

Therefore, polarized varieties $(X,L)$ with
$\Delta(X,L)=0$ correspond to projective varieties of minimal degree.  A description of these varieties has been given by many authors including the classification of minimal surfaces  \cite{DP} (also \cite{Nagata}),  and the higher  dimensional cases by Bertini in  1907 \cite{Bertini}, and subsequent treatments by Harris, Xamb{\'o}, Griffiths and Harris, Fujita, and Beltrametti and Sommese \cite{HarrisBound, X, GH, fujita,BeltraSom}.  We recommend  \cite{EisHarrMinDeg} for a good historical account.

\begin{proposition}\label{classy} \cite[Chapter 1, (5.10), (5.15)]{fujita},  \cite[Proposition 3.1.2]{BeltraSom} We suppose that  $\Delta(X,L)=0$ and $d=\dim X \ge 2$:
\begin{enumerate}
\item $(X,L)\cong (\Bbb{P}^d,\mathcal{O}_{\Bbb{P}^d}(1))$ if $L^d=1$;
\item  $(X,L)\cong (Q,\mathcal{O}_{Q}(1))$, where $Q$ is a not necessarily smooth quadric in $\Bbb{P}^{d+1}$ if $L^d=2$;
\item  $(X,L)$ is a $\mathbb{P}^{d-1}$ bundle over $\mathbb{P}^1$, $X\cong \mathbb{P}(\mc{E})$, for a vector bundle $\mc{E}$ on $\mathbb{P}^1$ which is a direct sum of line bundles of positive degrees;
\item  $(X,L)\cong (\Bbb{P}^2,\mathcal{O}_{\Bbb{P}^2}(2))$; or
\item  $(X,L)$ is a (normalized) generalized cone over a smooth subvariety $V \subset X$ with $\Delta(V,L_{V})=0$, where $L_{V}$ denotes the restriction of $L$ to $V$.
\end{enumerate}
\end{proposition}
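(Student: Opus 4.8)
The plan is to follow the classical route to the classification of varieties of minimal degree (del Pezzo, Bertini, Nagata, and in this polarized formulation, Fujita), invoking only facts already recorded in the excerpt. By the results cited immediately before the statement, $\Delta(X,L)=0$ forces $L$ to be very ample with $H^0(X,L)$ generating the entire section ring; hence $(X,L)$ is realized as a nondegenerate, reduced, irreducible subvariety $X\subset\mathbb{P}^N$ with $N=h^0(X,L)-1$ and $L=\mathcal{O}_X(1)$, and $\deg X=L^d=N-d+1=1+\operatorname{codim} X$, i.e.\ $X$ is of minimal degree; conversely every polarized pair arising this way has $\Delta=0$. So it suffices to classify nondegenerate irreducible $X\subset\mathbb{P}^N$ of minimal degree with $d=\dim X\ge 2$, and the argument proceeds by induction on $d$ via a general hyperplane section.

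I would first settle the low values of $D:=L^d=\deg X$. If $D=1$ then $N=d$, and a nondegenerate variety of degree $1$ in $\mathbb{P}^d$ is all of $\mathbb{P}^d$: this is case (1). If $D=2$ then $N=d+1$ and $X$ is an irreducible, possibly singular, quadric hypersurface in $\mathbb{P}^{d+1}$: case (2). Henceforth assume $D\ge 3$. The inductive mechanism is as follows: for a general hyperplane $H$, Bertini's theorems give that $X'=X\cap H$ is reduced, irreducible and nondegenerate in $H\cong\mathbb{P}^{N-1}$, of dimension $d-1$ and the same degree $D$, hence again of minimal degree, with $\Delta(X',\mathcal{O}_{X'}(1))=0$. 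When $d=2$ this reduces the problem to curves, where a nondegenerate irreducible curve of minimal degree $D$ in $\mathbb{P}^D$ is the rational normal curve $(\mathbb{P}^1,\mathcal{O}(D))$ (by Riemann--Roch together with the uniform position principle, a general hyperplane section imposing independent conditions). The surface case $d=2$, $D\ge 3$, is then the classical del Pezzo--Nagata list: the Veronese $(\mathbb{P}^2,\mathcal{O}(2))$ when $D=4$, a smooth rational normal scroll $\mathbb{P}(\mathcal{O}(a)\oplus\mathcal{O}(b))$ with $1\le a\le b$, $a+b=D$, or the cone over a rational normal curve — exactly cases (3)--(5) in dimension two. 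For $d\ge 3$, induction makes $X'$ one of the varieties on the list: a linear space, a quadric, the Veronese surface (possible only when $d-1=2$), a $\mathbb{P}^{d-2}$-bundle over $\mathbb{P}^1$, or a normalized generalized cone over one of these.

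The delicate point — and the main obstacle — is the reconstruction step: recovering $X$ from the structure of its general hyperplane section $X'$. I would split on whether $X$ is a cone. If $X$ is a cone with vertex a linear subspace $\Lambda$, then $X$ is the normalized generalized cone over a nondegenerate minimal-degree variety $V$ of dimension $d-1-\dim\Lambda$ that is not a cone; the cases already treated identify $V$ as one of $\mathbb{P}^m$, a smooth quadric, the Veronese surface, or a smooth scroll over $\mathbb{P}^1$, which is case (5). If $X$ is not a cone, one exploits the ruling of $X'$ by linear spaces: the members of the $\mathbb{P}^{d-2}$-ruling of $X'$ must be hyperplane sections of $\mathbb{P}^{d-1}$'s lying on $X$, and these fit together into a ruling of $X$ over an irreducible curve $B$; minimality of the degree forces $B\cong\mathbb{P}^1$, so $X\cong\mathbb{P}(\mathcal{E})\to\mathbb{P}^1$, and very ampleness and nondegeneracy of $L=\mathcal{O}_X(1)$ force $\mathcal{E}$ to be a direct sum of line bundles of positive degree — this is case (3) (the remaining possibility, case (4), arising only in the leftover situation $d=2$, $D=4$).

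Establishing that this family of linear spaces is irreducible of the correct dimension, that its distinct members are disjoint (there being no cone structure), and that it globalizes to a projective bundle is where genuine geometry enters, and where the hypothesis $\Delta=0$ — that $X$ is as degenerate as possible for its degree — is essential; everything else is bookkeeping with hyperplane sections and Bertini. Alternatively, one can bypass the ambient $\mathbb{P}^N$ and argue intrinsically in Fujita's framework, showing that $\Delta(X,L)=0$ guarantees a ladder $X=X_d\supset X_{d-1}\supset\cdots\supset X_1$ with $L_j:=L|_{X_j}$, $X_{j-1}\in|L_j|$, and $\Delta(X_j,L_j)=0$ for all $j$, thereby reducing to the curve case $(\mathbb{P}^1,\mathcal{O}(D))$ and then reading off the structure of $(X,L)$ from the rungs of the ladder.
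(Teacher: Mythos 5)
The paper offers no proof of this proposition: it is imported verbatim from the literature, with the references \cite[Chapter 1, (5.10), (5.15)]{fujita} and \cite[Proposition 3.1.2]{BeltraSom} standing in for the argument. So the relevant comparison is with those sources, and your outline does follow their classical route: use $\Delta=0$ to get very ampleness and generation in degree one (as the paper itself records just before the statement), identify $(X,L)$ with a nondegenerate variety of minimal degree $D=1+\operatorname{codim}X$, dispose of $D=1,2$ directly, and for $D\ge 3$ run an induction on dimension through general hyperplane sections (equivalently, Fujita's ladder), terminating in the rational normal curve and the del Pezzo--Nagata surface list.

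As a proof, however, what you have written is a sketch with the decisive step left open, and you say so yourself. The heart of the theorem is exactly the reconstruction: given that the general hyperplane section $X'$ is a scroll (or the Veronese, or a cone), you must show that when $X$ is not a cone the $\mathbb{P}^{d-2}$'s ruling $X'$ extend to $\mathbb{P}^{d-1}$'s on $X$, that these form an irreducible one-parameter family with disjoint members, that the base curve is $\mathbb{P}^1$, and that the resulting morphism realizes $X$ as $\mathbb{P}(\mathcal{E})$ with $\mathcal{E}$ a sum of line bundles of positive degree; one also needs the classical fact that the Veronese surface is a hyperplane section only of its own cone, to prevent a spurious new case at $d=3$. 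None of this is established in the proposal, and it is precisely where Fujita's (5.10)/(5.15) and the Eisenbud--Harris account do their work. Smaller asserted-but-unproved ingredients: nondegeneracy of the general hyperplane section, the surface classification itself (invoked rather than derived, though that is one of the cited results anyway), and the step from very ampleness of $\mathcal{O}_{\mathbb{P}(\mathcal{E})}(1)$ to positivity of all summands of $\mathcal{E}$. For the purposes of this paper a citation is entirely appropriate, and your outline is a fair summary of the cited proof; but it should be presented as such, not as a complete argument.
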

\noindent

For (1) in Proposition \ref{classy}, we note the related paper \cite{goren}.

\begin{definition}\label{DRankScaling}
We say that $\mathbb{V}$ has {\bf $\Delta$-invariant zero rank scaling} if $\Delta(\mc{X}_x,\mc{L}_x)=0$ for $x\in \mc{M}_{g,n}$ such that $\mathbb{V}|_x^*\cong {H}^0(\mc{X}_x,\mc{L}_x)$.  \end{definition}

\begin{remark}
The function $f(m)=\rk \Bbb{V}[m]$ determines $\Delta(\mc{X}_x,\mc{L}_x)$:
\begin{enumerate}
\item $f(1)=h^0(\mc{X}_x,\mc{L}_x)$;
\item  $\op{dim}(\mc{X}_x)=\op{deg}(f(m))$; and
\item $\mc{L}_x^{\op{dim}\mc{X}_x}=c \times \dim \mc{X}_x!$, where $c$ is the leading coefficient of $f(m)$.
\end{enumerate}
Moreover, $H^i(\mathcal{X}_x,\mc{L}_x^{\tensor m})=0$  for $i>0, m\geq 0$ by  \cite[Theorem 9.6]{telly}, so the Hilbert polynomial is determined by the first few values of $\rk \Bbb{V}[m]$. The dimension of $\mc{X}_x$  can be bounded above, in genus $0$ it is no more than the dimension of the corresponding flag variety. There is also an explicit formula for $\mathcal{L}_x^{\op{dim}\mc{X}_x}$,  the degree of $\mathcal{L}_x$ due to Witten \cite{Wi} (also see Sections 3,\ 4  in \cite{BBV} for examples).

\end{remark}

\begin{definition} We refer to $\mc{L}_x^{\dim \mc{X}_x}$ as the {\textbf{degree of the block}} $\mathbb{V}$. 
 \end{definition}

\begin{example}If $X$ has dimension one and  $\Delta(X,L)=0$ then $(X,L)=(\pone,\mathcal{O}(D))$
for some $D>0$. Indeed, $\Delta(\pone,\mathcal{O}(D))=0$. Conversely, $0=\Delta(X,L) = 1+\op{deg}(L) - {h}^0(L)$. Riemann-Roch gives that $\chi(X,L)= 1-g_a(X) + \op{deg}(L)$.
Hence ${h}^1(X,L) = -g_{a}(X)=0$, where $g_a(X)$ is the arithmetic genus of  $X$.
\end{example}

\begin{definition}\label{TOS}We refer to the various types of $\Delta$-invariant zero rank scaling according to the classification of projective varieties of minimal degree described in Proposition \ref{classy}.  For example:
\begin{enumerate}
\item $\op{rk}(\mathbb{V}[m])={{d+m}\choose m}$, then $\mathbb{V}$ has projective rank scaling;
\item $\op{rk}(\mathbb{V}[m])=2{{m+d-1}\choose {d}} + {{m+d-1} \choose {d-1}}$, then $\mathbb{V}$ has quadric rank scaling;
\item $\op{rk}(\mathbb{V}[m])=(m+1)(1 + \frac{m(a+b)}{2})$, then $\mathbb{V}$ has  $(S(a,b),\mathcal{O}(1))$ scaling\footnote{One can write down a more general sequence for the scrolls $(S(a_1,\dots,a_d)=\Bbb{P}(\mathcal{E}), \mc{O}(1))$, discussed in Section \ref{scrolls}.};
\item $\op{rk}(\mathbb{V}[m])=(m+1)(2m+1)$, then $\mathbb{V}$ has Veronese surface scaling; and
\item $\op{rk}(\mathbb{V}[m])=dm+1$, then $\mathbb{V}$ has $(\Bbb{P}^1,\mathcal{O}(d))$, or rational normal curve scaling.
\end{enumerate}
\end{definition}

\section{Divisor class formulas in case extensions exist}\label{mimicry}
Here we prove the following result, which is the full statement of Theorem \ref{MainLemmaI}, from the introduction.
\begin{theorem}\label{MainLemma} Suppose that $\mathbb{V}$ has  $\Delta$-invariant zero rank scaling, and assume that geometric interpretations exist for $\mathbb{V}$ at all points (see Definition \ref{GeometricInterpretation}).  Let
$D=\deg(\mc{L}_x)$ for $x\in \mc{M}_{g,n}$, and $R=\rk \Bbb{V}$. Then there exist vector bundles $\mathcal{W}_i, i=1,\dots,D$ on $\ovmc{M}_{g,n}$ such that for all integers $m\geq 1$,
\begin{multline}\label{MainEq}
c_1(\Bbb{V}[m])=\Bigg({{m+R-1}\choose{R}}  + \sum_{i=2}^{D}(-1)^{i-1}(i-1){{D}\choose{i}}{{m-i+R-1}\choose{R}}\Bigg)c_1(\mathbb{V})+\sum_{i=2}^D(-1)^i{{m-i+R-1}\choose{R-1}} c_1(\mathcal{W}_i).
\end{multline}
The first Chern class $c_1(\mathcal{W}_i)$ can be written in terms of $c_1(\Bbb{V}[m]),m=1,\dots,i$ using Equation \eqref{MainEq}. Therefore Equation \eqref{MainEq} expresses $c_1(\Bbb{V}[m])$ in terms of $c_1(\Bbb{V}[1]),\dots, c_1(\Bbb{V}[D])$ for all $m$.
\end{theorem}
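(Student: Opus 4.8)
The plan is to produce, over $\ovmc{M}_{g,n}$, a relative version of the minimal graded free resolution of the homogeneous coordinate ring of a variety of minimal degree, and then to read off \eqref{MainEq} by taking first Chern classes of its graded pieces. First I would observe that the integer-valued function $f(m):=\rk\Bbb{V}[m]$ is locally constant on $\ovmc{M}_{g,n}$, so by the Remark following Definition \ref{DRankScaling} it is the Hilbert polynomial of the pair $(\mc{X}_x,\mc{L}_x)$ at \emph{every} point $x$ where a geometric interpretation exists; by hypothesis this is every $x\in\ovmc{M}_{g,n}$, and one reads off $h^0(\mc{X}_x,\mc{L}_x)=f(1)=R$, $\dim\mc{X}_x=\deg f$, and $\deg(\mc{L}_x)=D$. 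Since $\Delta(\mc{X}_x,\mc{L}_x)=\dim\mc{X}_x+D-R$ vanishes for $x\in\mc{M}_{g,n}$, it vanishes identically; hence by the facts recalled in Section \ref{Delta} each $\mc{X}_x$ is projectively normal and embedded by $|\mc{L}_x|$ as a nondegenerate variety of minimal degree in $\Bbb{P}^{R-1}$. All such varieties share one graded Betti table: the coordinate ring $S_x/I_x$ has a $2$-linear resolution of length equal to the codimension $D-1$, with $\beta_{i-1,i}=(i-1)\binom{D}{i}$ for $i=2,\dots,D$ (the Eagon--Northcott numbers, as for a rational normal curve of degree $D$).

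Next I would set $\mathcal{S}:=\bigoplus_{m\ge0}\op{Sym}^m(\Bbb{V}^*)$, a sheaf of graded $\mathcal{O}_{\ovmc{M}_{g,n}}$-algebras. Iterating the commutative multiplication on the flat sheaf of algebras $\mathcal{A}=\bigoplus_{m\ge0}\Bbb{V}[m]^*$ of Section \ref{ManonAlgebra} gives a graded $\mathcal{O}$-algebra map $\mathcal{S}\to\mathcal{A}$ extending the identity on $\Bbb{V}^*=\mathcal{A}_1$. On the fibre over $x$ this is the homogeneous coordinate ring surjection $S_x\twoheadrightarrow S_x/I_x$ (surjectivity being projective normality, guaranteed by $\Delta=0$), so $\mathcal{S}\to\mathcal{A}$ is fibrewise surjective, hence surjective, with kernel a graded subbundle $\mathcal{I}\subset\mathcal{S}$ restricting to $I_x$. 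The goal is then the exact complex of sheaves of graded $\mathcal{S}$-modules
\[
0\longrightarrow \mathcal{S}(-D)\otimes\mathcal{W}_D\longrightarrow\cdots\longrightarrow \mathcal{S}(-2)\otimes\mathcal{W}_2\longrightarrow \mathcal{S}\longrightarrow \mathcal{A}\longrightarrow 0,
\]
with each $\mathcal{W}_i$ a vector bundle on $\ovmc{M}_{g,n}$ of rank $(i-1)\binom{D}{i}$ restricting on each fibre to the minimal free resolution of $S_x/I_x$ (when $D=1$ this complex is just $\mathcal{S}=\mathcal{A}$; one may put $\mathcal{W}_1:=\mathcal{O}$, which does not enter \eqref{MainEq}).

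Constructing this complex is the step I expect to be the main obstacle. One builds it inductively: take $\mathcal{W}_2:=\mathcal{I}[2]$, a subbundle of $\op{Sym}^2\Bbb{V}^*$ of fibrewise dimension $\binom{R+1}{2}-f(2)=\binom{D}{2}$; the multiplication $\mathcal{S}(-2)\otimes\mathcal{I}[2]\to\mathcal{I}$ is fibrewise surjective because $I_x$ is generated by quadrics, and at each successive stage the relevant syzygy sheaf is concentrated (because the fibrewise resolution is $2$-linear) in a single degree, where it is locally free of the constant rank $\beta_{i-1,i}$ by cohomology-and-base-change, the essential input being the constancy of the Betti numbers from the first paragraph. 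Exactness of the resulting complex of vector bundles then follows from fibrewise exactness together with flatness. (Equivalently one may define $\mathcal{W}_i$ directly as a suitable $R^{\bullet}\pi_*$ of a twist of the relative ideal sheaf on $\Bbb{P}(\Bbb{V})\to\ovmc{M}_{g,n}$ and check local freeness by base change; either way the relative homological algebra must be handled with some care.)

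Finally I would restrict the complex to each graded degree $m\ge1$, obtaining an exact sequence of vector bundles
\[
0\to \op{Sym}^{m-D}\Bbb{V}^*\otimes\mathcal{W}_D\to\cdots\to \op{Sym}^{m-2}\Bbb{V}^*\otimes\mathcal{W}_2\to \op{Sym}^{m}\Bbb{V}^*\to \Bbb{V}[m]^*\to 0,
\]
with the convention $\op{Sym}^{k}=0$ for $k<0$ (so $\mathcal{W}_i$ contributes only when $i\le m$). Additivity of $c_1$ on exact sequences, the identities $c_1(\op{Sym}^{k}E)=\binom{k+R-1}{R}c_1(E)$ and $c_1(E\otimes W)=\rk(W)\,c_1(E)+\rk(E)\,c_1(W)$ for $E$ of rank $R$, together with $\rk\op{Sym}^{k}\Bbb{V}^*=\binom{k+R-1}{R-1}$, $\rk\mathcal{W}_i=(i-1)\binom{D}{i}$, and $c_1(\Bbb{V}^*)=-c_1(\Bbb{V})$, $c_1(\Bbb{V}[m]^*)=-c_1(\Bbb{V}[m])$, then yield \eqref{MainEq} after collecting terms. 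For the last assertion, set $m=i$ in \eqref{MainEq}: every $\mathcal{W}_j$ with $j>i$ drops out since $\binom{R-1-(j-i)}{R-1}=0$, while $\mathcal{W}_i$ enters with coefficient $(-1)^i\binom{R-1}{R-1}=\pm1$; hence one solves triangularly for $c_1(\mathcal{W}_i)$ in terms of $c_1(\Bbb{V}[1]),\dots,c_1(\Bbb{V}[i])$, and substituting these back into \eqref{MainEq} expresses $c_1(\Bbb{V}[m])$ in terms of $c_1(\Bbb{V}[1]),\dots,c_1(\Bbb{V}[D])$ for every $m$.
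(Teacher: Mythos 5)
Your proposal is correct and follows essentially the same route as the paper: you relativize the canonical $2$-linear (Eagon--Northcott) resolution of the ideal of a variety of minimal degree over $\ovmc{M}_{g,n}$ by an inductive syzygy/kernel construction with fibrewise control (this is exactly the paper's Lemma \ref{Canonical} and Proposition \ref{VB}), and then take first Chern classes of the graded pieces to obtain Equation \eqref{MainEq} and solve triangularly for the $c_1(\mathcal{W}_i)$. The only cosmetic difference is that you justify local freeness of the syzygy bundles via constancy of Betti numbers and base change, where the paper runs an explicit induction over $\operatorname{Spec}(\mathcal{O}_x)$ using three-term exactness on fibres; the content is the same.
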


\begin{remark}
\begin{enumerate}
\item Crucially,
if  $\mathbb{V}$ has  $\Delta$-invariant zero rank scaling at some  $x$, and if geometric interpretations exist for $\mathbb{V}$ at all points (see Definition \ref{GeometricInterpretation}), then
 $\Delta(\mc{X}_x,\mc{L}_x)=0$ for all points $x\in \ovmc{M}_{g,n}$. 
 \item More generally, in any family of polarized varieties, the 
 $\Delta$-invariant is upper semicontinuous (i.e., can only go down at special points).
 \end{enumerate}
\end{remark}

\subsection{Outline of proof}
The short alternate proof in the case of quadrics in Corollary \ref{QuadricIdentity} contains all the essential ideas. The proof of Theorem \ref{MainLemma} proceeds in two steps.
\begin{enumerate}
\item Suppose $(X,L)$ is a polarized variety of $\Delta$-invariant zero. Then we have a canonical  embedding of $X$ in a projective space
$X\hookrightarrow \op{Proj}(\op{B}_{\bullet})$ with $\op{B}_{\bullet}=\bigoplus_{m \in \mathbb{Z}_{\ge 0}} \op{Sym}^m({H}^0(X,L))$.
In Section \ref{sho}, we recall the canonical resolution of the ideal sheaf  $\mathcal{I}_{\op{X}}$.
\item In Section \ref{recollement}, we glue these resolutions of the ideal sheaves corresponding to $(\mathcal{X}_x,\mc{L}_x)$ for all $x\in \ovmc{M}_{g,n}$. This will result in an
exact sequence \eqref{LESsheaves} which yields  \eqref{MainEq}, using Remark \ref{rankc1}.
\end{enumerate}

\begin{remark}\label{rankc1}
If $V$ is a vector bundle of rank $R$ then
$\rk (\operatorname{Sym}^m V)= {{m+R-1}\choose{R-1}}$, and $c_1 (\operatorname{Sym}^m V)= {{m+R-1}\choose{R}} c_1(V)$.
\end{remark}

\subsection{Resolutions of ideals of projective varieties of minimal degree}\label{sho}

If the pair $({X},{L})$ is a polarized variety having $\Delta$-invariant zero, then ${L}$ is very ample on ${X}$,  then from ${L}$ one obtains an embedding
$X\hookrightarrow \op{Proj}(\op{B}_{\bullet})$, as a variety of minimal degree.  In particular,
 the ideal $\mathcal{I}_{\op{X}}$, of $X$
  is the kernel of the surjective map
$$\phi:\op{B}_{\bullet}=\bigoplus_{m \in \mathbb{Z}_{\ge 0}} \op{Sym}^m({H}^0(X,L)) \twoheadrightarrow \op{A}_{\bullet}=\bigoplus_{m \in \mathbb{Z}_{\ge 0}} {H}^0(X,L^{\otimes m}),$$
and $\mathcal{I}_{\op{X}}$ is  known to have a minimal free resolution of the form
  \begin{equation}\label{mimic}
  0\to W_D\tensor\mathcal{O}(-D)\to\dots  \to W_3\tensor\mathcal{O}(-3)\to  W_2\tensor\mathcal{O}(-2)  \to \mathcal{I}_{\op{X}}\to 0
  \end{equation}
where for $i \in \{1,\ldots, D\}$, the  $W_i$ are vector spaces of dimension $(i-1){{D}\choose{i}}$, and $D=L^{dim{\op{X}}}$ is the degree of $L$  \cite{UWE,EisenbudGoto}.

\begin{remark}\label{vanish}
Let $\mathcal{J}_k$ be the image of the map $W_{k+1}\tensor\mathcal{O}(-k-1)\to W_{k}\tensor\mathcal{O}(-k)$ in \eqref{mimic}. Then the exactness of \eqref{mimic} implies that
${H}^i(\Bbb{P}^N,\mathcal{J}_k (m))=0$ for all $m\geq 0$ and $i>0$, and for $i=0$ and $m<k+1$. This is proved by induction using the
exact sequences
 $$0\to \mathcal{J}_k\to W_{k}\tensor\mathcal{O}(-{k})\to \mathcal{J}_{k-1}\to 0.$$
 Note that this also implies that ${H}^i(\Bbb{P}^N,\mathcal{I}_X(m))={H}^i(X,L^{\otimes m})=0$ for all  for $i \ge 1$, and $m\geq 0$ and
 $$\op{H}^0(\op{X},\op{L}^{\otimes m})= {H}^0(\Bbb{P}^N,\mathcal{O}(m))/{H}^0(\Bbb{P}^N,\mathcal{I}_X(m)).$$
\end{remark}

Minimal free resolutions in the context of graded rings are known to be unique up to unique isomorphisms. The same
 technique shows that the complex \eqref{mimic} is unique up to a unique isomorphism.
  Twisting \eqref{mimic} by $\mathcal{O}(k)$ with $2\leq k\leq D$, and then taking global sections gives an exact sequence (see Remark \ref{vanish})
 \begin{equation}\label{Kernel}
 0\!\rightarrow\! W_k\rightarrow W_{k-1}\tensor\operatorname{Sym}^1(B_1)\!\rightarrow\!\dots\!\rightarrow \!W_2\tensor  \operatorname{Sym}^{k-2}(B_1)\!\rightarrow \!{H}^0(\Bbb{P}^N,\mathcal{I}(k)) \rightarrow 0.
 \end{equation}
 Therefore, $W_k$ can be reconstructed uniquely, as a kernel, by induction.

\begin{lemma}\label{Canonical}Given a pair $(X,L)$, having $\Delta$-invariant zero, with notation as above, there are canonical vector spaces ${W}_{i}$,  for  $2 \le i \le D$,  and a long exact sequence of the form:
 \begin{equation}\label{LESmodules}
 0 \rightarrow {W}_{D} \otimes \op{B}_{\bullet} (-D)\overset{\phi_i}{\rightarrow} \cdots \rightarrow {W}_{3}  \otimes \op{B}_{\bullet}(-3)  \overset{\phi_1}{\rightarrow} {W}_{2}  \otimes \op{B}_{\bullet} (-2)\overset{\phi_0}{\rightarrow} \mc{I}_{\bullet} \rightarrow 0,
 \end{equation}
giving a graded resolution of  the graded ideal $\mc{I}_{\bullet}=\bigoplus_{m\geq 0} \mc{I}_{m}$ with $\mc{I}_{m}= {H}^0(\Bbb{P}^N,\mathcal{I}_X(m))$.  In particular
 \begin{equation}\label{LESmodulesm}
 0 \! \rightarrow\! {W}_{D} \otimes \op{B}_{m-D} (-D)\overset{\phi_i}{\rightarrow}\! \cdots \!\rightarrow \!{W}_{3}  \otimes \op{B}_{m-3}(-3)  \overset{\phi_1}{\rightarrow} \!{W}_{2}  \otimes \op{B}_{m-2} (-2)\overset{\phi_0}{\rightarrow}\! \mc{I}_{m}\! \rightarrow\! 0
 \end{equation}
is a canonical resolution of the degree $m$ part $\mc{I}_{m}$.
  \end{lemma}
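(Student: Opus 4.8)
The plan is to extract Lemma \ref{Canonical} from the material already assembled in Section \ref{sho}, essentially by repackaging the minimal free resolution \eqref{mimic} of $\mathcal{I}_X$ at the level of graded modules and verifying canonicity. First I would note that since $\Delta(X,L)=0$, the line bundle $L$ is very ample and the embedding $X\hookrightarrow \operatorname{Proj}(\operatorname{B}_\bullet)=\mathbb{P}^N$ realizes $X$ as a variety of minimal degree; by \cite{UWE,EisenbudGoto} the sheaf ideal $\mathcal{I}_X$ admits the minimal free resolution \eqref{mimic}, with the $W_i$ of dimension $(i-1)\binom{D}{i}$. Applying the global sections functor $\bigoplus_{m\ge 0}\Gamma(\mathbb{P}^N,-\otimes\mathcal{O}(m))$ to \eqref{mimic} and invoking the vanishing statements recorded in Remark \ref{vanish} (namely $H^{>0}(\mathbb{P}^N,\mathcal{J}_k(m))=0$ for $m\ge 0$, proved by the stated induction on the short exact sequences $0\to\mathcal{J}_k\to W_k\otimes\mathcal{O}(-k)\to\mathcal{J}_{k-1}\to 0$) keeps the complex exact, and since $\Gamma(\mathbb{P}^N,\mathcal{O}(j)\otimes\mathcal{O}(m))=\operatorname{Sym}^{m-j}(B_1)=\operatorname{B}_{m-j}$ this yields precisely the graded complex \eqref{LESmodules}, with $\mathcal{I}_\bullet=\bigoplus_m H^0(\mathbb{P}^N,\mathcal{I}_X(m))$ appearing as the cokernel of $\phi_1$. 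Assembling over all $m$ gives \eqref{LESmodules}; reading off the degree-$m$ graded piece gives \eqref{LESmodulesm}.

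The second task is canonicity of the $W_i$. Here I would use the uniqueness of minimal free resolutions over the graded polynomial ring $\operatorname{B}_\bullet$: the resolution \eqref{mimic} (equivalently its module incarnation \eqref{LESmodules}) is minimal — the differentials have entries in the irrelevant ideal — because the generators of $\mathcal{I}_X$ sit in degree $2$, the first syzygies in degree $3$, and so on, so no unit entries can occur. Minimal free resolutions being unique up to (unique, in the graded-minimal setting) isomorphism, the graded vector spaces $W_i$ — which are the degree-$i$ strands of the $i$-th term — are canonically attached to $(X,L)$. Concretely, as already observed in the excerpt, twisting by $\mathcal{O}(k)$ and taking sections produces the exact sequence \eqref{Kernel}, $0\to W_k\to W_{k-1}\otimes\operatorname{Sym}^1(B_1)\to\cdots\to W_2\otimes\operatorname{Sym}^{k-2}(B_1)\to H^0(\mathbb{P}^N,\mathcal{I}(k))\to 0$, so that each $W_k$ is realized functorially as an iterated kernel built from the $B_1$'s and the spaces $H^0(\mathbb{P}^N,\mathcal{I}_X(j))$ for $j\le k$; since those ingredients are themselves canonically determined by $(X,L)$ (indeed $H^0(\mathbb{P}^N,\mathcal{I}_X(j))=\ker(\operatorname{Sym}^j H^0(X,L)\to H^0(X,L^{\otimes j}))$), an induction on $k$ gives the canonicity assertion. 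This also pins down the identification $\mathcal{I}_m=H^0(\mathbb{P}^N,\mathcal{I}_X(m))$ and the formula $H^0(X,L^{\otimes m})=\operatorname{Sym}^m H^0(X,L)/\mathcal{I}_m$ used implicitly.

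The main obstacle, such as it is, is bookkeeping rather than anything deep: one must be careful that taking global sections of \eqref{mimic} twisted by $\mathcal{O}(m)$ really preserves exactness in \emph{every} degree $m\ge 0$ — for small $m$ (say $m<D$) several of the terms $\operatorname{B}_{m-i}$ vanish and one needs the vanishing of the connecting cohomology from Remark \ref{vanish} to ensure the truncated complex stays exact — and that the indexing of the $W_i$ and the twists matches between the sheaf resolution \eqref{mimic} and the module resolution \eqref{LESmodules}. Once the exactness-under-global-sections is granted on the strength of Remark \ref{vanish}, and minimality is observed from the degree reasons above, the lemma follows. I do not expect to need the fine classification of Proposition \ref{classy} here; only the existence and shape of the minimal resolution \eqref{mimic}, which is the Eisenbud--Goto / del Pezzo--Bertini input cited in the excerpt.
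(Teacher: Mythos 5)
Your proposal is correct and follows essentially the same route as the paper: the lemma is obtained by taking global sections of the twisted minimal free resolution \eqref{mimic}, using the vanishing in Remark \ref{vanish} to preserve exactness, and obtaining canonicity of the $W_i$ via the inductive kernel reconstruction \eqref{Kernel}, exactly as in the discussion preceding the lemma in Section \ref{sho}. No gaps; the minimality/degree bookkeeping you flag is handled the same way in the paper.
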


\subsection{Resolutions in families}\label{recollement}
Suppose that $\mathbb{V}$ has  $\Delta$-invariant zero rank scaling, and assume that geometric interpretations exist for $\mathbb{V}$ at all points. Then for each $x\in \ovmc{M}_{g,n}$, we have a resolution of the form \ref{Canonical}. In this section we will show that we can glue these resolutions, and get an exact sequence over $\ovmc{M}_{g,n}$ of the form:

\begin{equation}\label{LESsheaves}
0  \rightarrow \mathcal{W}_D \otimes \op{Sym}^{m-D}(\Bbb{V})
\rightarrow \cdots  \rightarrow  \mathcal{W}_2 \otimes \op{Sym}^{m-2}(\Bbb{V})  \rightarrow \op{Sym}^{m}(\Bbb{V})  \overset{\phi_m}{\rightarrow} \Bbb{V}[m]^*  \rightarrow 0,
\end{equation}
where $\mathcal{W}_i$ are vector bundles on $\ovmc{M}_{g,n}$. This will finish the proof of Theorem \ref{MainLemma}.

 For a given $\mathbb{V}$, we form the following sheaves on $\ovmc{M}_{g,n}$: Let $\mc{A}=\bigoplus_{m\ge 0}\mc{A}_m$, where $\mc{A}_m=\Bbb{V}[m]^*$ and $\mc{A}_{0}=\mc{O}$. Let $\mc{B}_{\bullet}=\bigoplus_{m \in \mathbb{Z}_{\ge 0}}\mc{B}_m$, where $\mc{B}_m=\op{Sym}^m\mc{A}_1$. Consider $\phi_m: \mc{B}_{m}= \op{Sym}^{m}(\mc{A}_1)\longrightarrow \mc{A}_{m}, \ \ \mbox{and }  \mc{I}_{m}=\op{ker}(\phi_m).$ Clearly $\mc{I}_{\bullet}$ is a graded $\mc{B}_{\bullet}$ ideal.

\begin{proposition}\label{VB}
There is an exact sequence of the form \eqref{LESsheaves}
with fibers equal to the resolution of the $\op{ker}(\phi_m)|_x$,  given in Lemma \ref{Canonical}.
Furthermore, \eqref{LESsheaves} is exact and $\mathcal{W}_i$ are locally free of finite rank.
\end{proposition}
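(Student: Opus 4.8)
The plan is to build the complex \eqref{LESsheaves} one term at a time, starting from the surjection $\phi_m\colon \operatorname{Sym}^m(\mathbb{V})\to \mathbb{V}[m]^*$, and to prove locally-freeness and exactness by checking fibers against Lemma \ref{Canonical}. First I would define $\mathcal{W}_2:=\ker\big(\phi_2\colon \operatorname{Sym}^2(\mathbb{V})\to \mathbb{V}[2]^*\big)$. Since $\mathbb{V}[2]^*$ is locally free and, by the hypothesis of $\Delta$-invariant zero rank scaling together with the assumption that geometric interpretations exist at all points, the fiber $\phi_2|_x$ is surjective with $\ker(\phi_2|_x)=\mathcal{I}_2(x)=W_2(x)\otimes B_\bullet(-2)_2$ of dimension independent of $x$ (it is the dimension-$2$ piece of the resolution in Lemma \ref{Canonical}, whose ranks $(i-1)\binom{D}{i}$ do not depend on $x$), the sheaf $\mathcal{W}_2$ is locally free of the expected rank and its formation commutes with base change. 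The key point enabling all of this is Remark \ref{vanish}: for every $x$ one has $H^i(\mathbb{P}^N,\mathcal{I}_X(m))=0$ for $i\ge 1$, $m\ge 0$, hence $h^0(\mathbb{P}^N,\mathcal{I}_X(m))=\binom{m+R-1}{R-1}-\rk\mathbb{V}[m]$ is constant in $x$; combined with the fact that $\operatorname{Sym}^m(\mathbb{V})$ is locally free and $\phi_m$ is fiberwise surjective, this forces $\ker(\phi_m)$ to be locally free with constant rank and compatible with restriction to points.

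Next I would iterate. Having constructed the locally free sheaf $\mathcal{W}_k$ realized fiberwise as the canonical vector space $W_k(x)$ of Lemma \ref{Canonical} (via the exact sequence \eqref{Kernel} interpreted in families), I form the map $\mathcal{W}_k\otimes \operatorname{Sym}^{m-k}(\mathbb{V})\to \mathcal{W}_{k-1}\otimes \operatorname{Sym}^{m-k+1}(\mathbb{V})$ as the degree $m$ component of the multiplication maps in the graded $\mathcal{B}_\bullet$-module structure, exactly mimicking the differentials $\phi_i$ of \eqref{LESmodules}. I then define $\mathcal{W}_{k+1}$ as the kernel of the appropriate induced map (equivalently, as the homology/kernel in the partially-built complex), and again use fiberwise vanishing — the exactness statements of Remark \ref{vanish} for the $\mathcal{J}_k$ — to conclude that this kernel is locally free of the rank $(k)\binom{D}{k+1}$ predicted by the resolution \eqref{mimic}, and that its fiber at $x$ is $W_{k+1}(x)$. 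Because minimal free resolutions in the graded setting are unique up to unique isomorphism (as recalled after Lemma \ref{Canonical}), the family version assembled this way is canonically determined, so there is no ambiguity in the gluing: local pieces automatically agree on overlaps. Running the induction up to $k=D$ terminates the complex, since $W_{D+1}(x)=0$ for all $x$ forces the last kernel to vanish.

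The exactness of \eqref{LESsheaves} as a complex of sheaves then follows from exactness on each fiber: a complex of locally free sheaves whose restriction to every point is exact is exact (and moreover the formation commutes with base change, so there are no higher Tor obstructions). That fiberwise exactness is precisely the content of Lemma \ref{Canonical} applied to $(\mathcal{X}_x,\mathcal{L}_x)$ — valid because $\Delta(\mathcal{X}_x,\mathcal{L}_x)=0$ at \emph{every} $x$ under our hypotheses, as noted in the Remark following the statement of Theorem \ref{MainLemma}. Finally, taking first Chern classes of \eqref{LESsheaves}, using $c_1(\operatorname{Sym}^m\mathbb{V})=\binom{m+R-1}{R}c_1(\mathbb{V})$ and $\rk(\operatorname{Sym}^m\mathbb{V})=\binom{m+R-1}{R-1}$ from Remark \ref{rankc1}, together with the known ranks of the $\mathcal{W}_i$, produces the alternating-sum identity \eqref{MainEq}, completing the proof of Theorem \ref{MainLemma}.

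The main obstacle I anticipate is the verification that each kernel $\mathcal{W}_{k+1}$ is actually \emph{locally free} rather than merely coherent: this requires knowing that the rank of $\ker(\phi_m|_x)$ and of all intermediate kernels is locally constant in $x$, which is where one genuinely needs the cohomology vanishing of Remark \ref{vanish} (so that $h^0(\mathbb{P}^N,\mathcal{I}_{X_x}(m))$ is computed by Riemann--Roch and is constant) rather than just semicontinuity. A secondary subtlety is making sure the differentials in the family are well-defined global maps of sheaves and not merely defined locally — this is handled by the canonicity of the construction in Lemma \ref{Canonical}, but it should be spelled out that the maps are induced by the intrinsic multiplication $\operatorname{Sym}^\bullet(\mathbb{V})\otimes \mathcal{W}_k \to \mathcal{W}_{k-1}\otimes \operatorname{Sym}^\bullet(\mathbb{V})$ coming from the $\mathcal{B}_\bullet$-module structure.
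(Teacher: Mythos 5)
Your proposal is correct and follows essentially the same route as the paper: the $\mathcal{W}_k$ are constructed as iterated kernels of the multiplication maps coming from the graded $\mathcal{B}_{\bullet}$-module structure, and their local freeness and compatibility with fibers is deduced from the fiberwise canonical resolutions of Lemma \ref{Canonical}, valid at every point because $\Delta(\mathcal{X}_x,\mathcal{L}_x)=0$ everywhere under the hypotheses. The only difference is in packaging: where you invoke the constant-fiber-rank criterion for kernels of maps of vector bundles, the paper runs an explicit induction over $\operatorname{Spec}(\mathcal{O}_x)$ with the hypothesis that the images of the differentials are locally free subbundles, which is precisely the bookkeeping needed to show that kernel formation commutes with passage to fibers --- the subtlety you correctly identify at the end.
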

\begin{proof}
We first build a resolution
\begin{equation}\label{H}
\cdots  \longrightarrow \mathcal{F}_{k} \rightarrow \cdots \rightarrow \mathcal{F}_{k-1}  \rightarrow \cdots \rightarrow \mathcal{F}_{0} \rightarrow \mc{I}_{\bullet}.
 \end{equation}

Let $\mc{I}^0_{\bullet}=\mc{I}_{\bullet}.$ We note that $\mc{I}^0_{\bullet}$  is a graded $\mc{B}_{\bullet}$-module.
Because $\mc{I}^0_{\bullet}$ is an ideal, there is a  morphism $\mc{I}^0_{2} \otimes \mc{B}_{m-2} \overset{\alpha^0_m} \longrightarrow \mc{I}^0_{m}$, and we set $\mc{I}^1_{m}=\op{ker}(\alpha^0_m)$.
Now we consider the morphism $\mc{I}^1_{3} \otimes \mc{B}_{m-3} \overset{\alpha^1_m} \longrightarrow \mc{I}^1_{m}$, and we set $\mc{I}^2_{m}=\op{ker}(\alpha^1_m)$.
At the $i$-th step we consider the morphism $\mc{I}^{i-1}_{i+1} \otimes \mc{B}_{m-(i+1)} \overset{\alpha^{i-1}_m} \longrightarrow \mc{I}^{i-1}_{m}$, and set $\mc{I}^i_{m}=\op{ker}(\alpha^{i-1}_m)$.

The terms in \eqref{H} are defined as follows. The sheaf $\mathcal{F}_{0}$ is $\bigoplus_{m} \mc{I}^0_{2} \otimes \mc{B}_{m-2}$ with $\mc{B}_i=0$ for $i<0$. The sheaf $\mathcal{F}_{1}$  is $\bigoplus_{m}\mc{I}^1_{3} \otimes \mc{B}_{m-3}$
and $\mathcal{F}_{i}$ is $\bigoplus_{m}\mc{I}^i_{i+2} \otimes \mc{B}_{m-i-2}$. Finally set $\mathcal{W}_i=\mc{I}^i_{i+2}$. This is the sheaf that appears in \eqref{LESsheaves}.

The map $\mathcal{F}_{0}\to \mc{I}_{\bullet}$ is clear because $\mc{I}_{\bullet}$ is a graded $\mc{B}_{\bullet}$ module. The sheaf $\mathcal{F}_{1}$  maps to $\mathcal{F}_0$ since $\mc{I}^1_{3}$  sits inside $\mathcal{F}_{0}$. One can define the maps in \eqref{H} by continuing this process. The general process is the following: if $f:M\to N$ is a map of graded modules for a graded ring $R$, then for any positive integer $s$,  $\ker(f_s)\tensor R(-s)\to M\to N$ is a complex (possibly non-exact).

Working on the (reduced) scheme $\{x\}$, we form a complex similar to \eqref{H}.   Let $\mc{A}'_{m}=(\mc{A}_m)\tensor k(x)$.
 \begin{equation}\label{HilbertSfiber}
\cdots  \longrightarrow \mathcal{F}'_{k} \rightarrow \cdots \rightarrow \mathcal{F}'_{k-1}  \rightarrow \cdots \rightarrow \mathcal{F}'_{0} \rightarrow \mc{I}'_{\bullet}.
 \end{equation}

Here $\mc{I}'_{\bullet}$  is $\op{ker}(\phi'_m)$ where  $\phi'_m: \mc{B'}_{m}= \op{Sym}^{m}(\mc{A}'_1)\longrightarrow \mc{A}'_{m}$.  From our assumption on $\Delta$-invariants (and Section \ref{sho}) it follows that \eqref{HilbertSfiber} is exact, and that the maps $\phi'_m$ are surjective. Note that for $k\geq 2$,
$\mc{I}'^{k-2}_{k}$ coincides with ${W}_{k}$ in Lemma \ref{Canonical} (as a suitable kernel from \eqref{Kernel}).
Now $\mc{I}_m$ is a kernel of a surjective map of vector bundles since $\phi_m'$ are surjective, and the map  $\mc{I}_{\bullet}\tensor k(x)\to \mc{I}'_{\bullet}$ an isomorphism.

For the inductive step, fix a point $x$, and suppose
$\psi:\mathcal{G}_{\bullet}\to\mathcal{H}_{\bullet}$ is morphism of graded $\mc{B}_{\bullet}$ modules.
Let $\mathcal{F}_{\bullet}$ be the kernel of $\psi$ and $\mathcal{F}'_{\bullet}$ the kernel of $\psi\tensor k(x)$
and $s\in \Bbb{Z}$. Assume
\begin{itemize}
\item $\mathcal{G}_m$ and $\mathcal{H}_m$  are vector bundles for all $m$;
\item  images of $\psi_m$ are locally free subbundles of $\mathcal{H}_m$ in the localization $\operatorname{Spec}(\mathcal{O}_x)$; and
\item the three term sequence on fibers is exact:  $\mathcal{F}'_s\tensor\mc{B}'_{\bullet}(-s) \to \mathcal{G}_{\bullet}\tensor k(x)\to \mathcal{H}_{\bullet}\tensor k(x)$.
\end{itemize}
Then
$\mathcal{F}_s\tensor\mc{B}_{\bullet}(-s) \to \mathcal{G}_{\bullet}\to \mathcal{H}_{\bullet}$ is exact over $\operatorname{Spec}(\mathcal{O}_x)$, and
hence $\mathcal{F}_{m}$ is the image of $\mathcal{F}_s\tensor\mc{B}_{m-s}$ in  $\mathcal{G}_m$, and is a subbundle
of the latter. The next step in the induction is with $\psi$ the map $\mathcal{F}_s\tensor\mc{B}_{\bullet}[-s] \to \mathcal{G}_{\bullet}$.

We work over $\operatorname{Spec}(\mathcal{O}_x)$.  Note that
$0\to \mathcal{F}_m\to \mathcal{G}_m\to \im(\psi)_m\to 0$
is an exact sequence, and
therefore, $\mathcal{F}_m$ are vector bundles. Now tensoring it by  $k(x)$
we obtain another exact sequence. Note that $\im(\psi)_m\tensor k(x)\to \mathcal{H}_m\tensor k(x)$
is injective since  $\im(\psi)_m\to \mathcal{H}_m$ is a sub-bundle. This tells us
that $\mathcal{F}_m\tensor k(x)\to \mathcal{F}'_m$ is an isomorphism. Finally
$\mathcal{F}_s\tensor\mc{B}_{m-s}\to \mathcal{F}_m$ is a surjection of vector bundles
because it is so after tensoring with $k(x)$ by our third hypothesis.
\end{proof}

\begin{remark}
For the validity of Equation \eqref{MainEq}, we only need geometric interpretations on the generic points of all boundary divisors, which would imply that Equation \eqref{MainEq} holds in $\operatorname{Pic}(\ovmc{M}_{g,n}\setminus Z)$ with $Z$ of codimension $2$. Since Picard groups are unaffected by removal of codimension $2$ substacks,  Equation \eqref{MainEq} would then hold on $\ovmc{M}_{g,n}$.
\end{remark}

\section{Cases when obstructions are non-zero}

In this section we prove Theorem \ref{MainI}, which we restate below:

\begin{theorem*}There are stable $n$-pointed curves $x=(C; p_1,\ldots,p_n)$ and vector bundles of conformal blocks $\mathbb{V}$, for which there are no polarized pairs $(\mc{X}_x,\mc{L}_x)$ such that
$$
\bigoplus_{m\geq 0} \Bbb{V}[m]|^*_{x}\cong \bigoplus_{m\ge 0} \op{H}^0(\mc{X}_x,\mc{L}_x^{\tensor m}),
$$
 holds.\end{theorem*}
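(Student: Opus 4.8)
The plan is to exhibit an explicit counterexample by combining Theorem~\ref{MainLemma} (the recursion forced on Chern classes when geometric interpretations exist for a bundle with $\Delta$-invariant zero rank scaling) with the explicit Chern class formulas of Fakhruddin \cite{Fakh} on $\ovmc{M}_{0,n}$. First I would choose a bundle $\mathbb{V}$ of type $\op{A}$ on $\ovmc{M}_{0,n}$ whose rank scaling function $f(m) = \rk\Bbb{V}[m]$ is \emph{verifiably} of $\Delta$-invariant zero type: using Witten's Dictionary (Theorem~\ref{WD}) or direct propagation/factorization, compute $f(m)$ for enough values of $m$ to pin down its degree (hence $\dim\mc{X}_x$), leading coefficient (hence $D = \mc{L}_x^{\dim\mc{X}_x}$), and $f(1) = h^0$, and check $\dim\mc{X}_x + D - f(1) = 0$. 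Concretely I expect the example to be one where the scaling is projective, quadric, scroll, Veronese, or rational-normal-curve type (the list in Def~\ref{TOS}), since those are the cases where $f(m)$ is a transparent closed-form polynomial and the hypothesis of Theorem~\ref{MainLemma} is easy to certify at an interior point $x \in \mc{M}_{0,n}$.

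Next I would assume, for contradiction, that $\mathbb{V}$ has geometric interpretations at all points of $\ovmc{M}_{0,n}$. By the Remark following Theorem~\ref{MainLemma}, $\Delta(\mc{X}_x,\mc{L}_x) = 0$ for \emph{all} $x$, so Proposition~\ref{VB} applies and Equation~\eqref{MainEq} must hold identically in $\op{Pic}(\ovmc{M}_{0,n})$ for every $m \ge 1$. The key step is then a \emph{numerical contradiction}: take the coefficients $a_i := c_1(\Bbb{V}[i])$ as elements of the (finite-dimensional, well-understood) Picard group of $\ovmc{M}_{0,n}$ — or, even more simply, evaluate all divisor classes against a single well-chosen test curve (an $F$-curve) to reduce to an identity of \emph{integers}. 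Equation~\eqref{MainEq} expresses every $c_1(\Bbb{V}[m])$ as a fixed $\mathbb{Z}$-linear (indeed binomial-coefficient) combination of $c_1(\Bbb{V}[1]),\dots,c_1(\Bbb{V}[D])$; I would compute both sides for some $m > D$ using \cite{Fakh}'s formula for $c_1(\Bbb{V}[m]) = c_1\bigl(\mathbb{V}(\sL_{r+1}, m\vec\lambda, m\ell)\bigr)$ and show the predicted recursion is violated. Fakhruddin gives $c_1$ in the basis of boundary divisors with coefficients that are themselves ranks of certain conformal blocks on lower-dimensional moduli spaces, so the failure of Equation~\eqref{MainEq} becomes an inequality between two explicitly computable rational numbers for a single coefficient, e.g. the coefficient of $\psi_1$ or of a chosen boundary divisor $\delta_{0,S}$.

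The main obstacle — and where the real work lies — is \textbf{finding a bundle that is simultaneously (a) of certified $\Delta$-invariant zero rank scaling and (b) for which the Fakhruddin-computed $c_1(\Bbb{V}[m])$ provably fails the forced recursion}; many natural bundles will happen to satisfy the recursion even without a geometric interpretation, so the example must be chosen with care. I would look among the low-rank level-one or level-two $\sL_2$ or $\sL_3$ bundles on $\ovmc{M}_{0,n}$ for small $n$ (say $n = 6, 7, 8$), where both sides of \eqref{MainEq} are hand-computable, and iterate. A secondary subtlety is certifying the hypothesis at the \emph{interior} point: one must know $\Delta(\mc{X}_x,\mc{L}_x) = 0$ for generic smooth $x$, which by the Remark after Def~\ref{DRankScaling} is equivalent to the rank-scaling function $f(m)$ matching one of the minimal-degree types in Proposition~\ref{classy} — so this reduces entirely to a rank computation, but one that must be carried out correctly, and for which Witten's Dictionary (Theorem~\ref{WD}) is the tool of choice. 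Once such $\mathbb{V}$ and $x$ are produced, the contradiction with Theorem~\ref{MainLemma} is immediate and the theorem follows; I would also remark that the same mechanism shows the algebra $\bigoplus_m \Bbb{V}[m]^*|_x$ can fail to be finitely generated, tying back to the motivation in the introduction.
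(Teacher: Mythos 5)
Your overall architecture is exactly the paper's: assume geometric interpretations exist, invoke Theorem~\ref{MainLemma} to force the recursion \eqref{MainEq} on the classes $c_1(\mathbb{V}[m])$, and then refute it numerically by evaluating both sides against F-curves using Fakhruddin's formulas. However, there is a genuine gap at the one step where the real content lies: actually producing a bundle that violates the recursion. You propose to hunt among low-rank level-one or level-two $\sL_2$/$\sL_3$ bundles on $\ovmc{M}_{0,n}$ for small $n$, but genus $0$ is precisely where the paper expects (Conjecture~\ref{ExtendZ}, where $Z=\ovmc{M}_{0,n}$ for $g=0$) that geometric interpretations hold at \emph{every} point, and all of the paper's genus-$0$ computations (Section~\ref{ExampleSection}) confirm the predicted identities. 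Part of your proposed search space is provably empty: level-one type-A bundles have rank one, hence $(\Bbb{P}^0,\mathcal{O}(1))$ scaling, and by Proposition~\ref{Yess} together with $c_1(\mathbb{V}[m])=m\,c_1(\mathbb{V})$ they admit geometric interpretations everywhere, so they can never yield the contradiction. Your proposal acknowledges that ``the example must be chosen with care'' but supplies no mechanism predicting where the recursion should fail, which is the whole difficulty.

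The paper's counterexamples live in genus $2$: $\mathbb{V}(\sL_2,1)$ on $\ovmc{M}_{2}$ (Example~\ref{GoodBad}) and $\mathbb{V}(\sL_2,\{\omega_1^n\},1)$ on $\ovmc{M}_{2,n}$ (Example~\ref{ForwardReference}), both with projective rank scaling $(\Bbb{P}^3,\mathcal{O}(1))$, and the contradiction is obtained by intersecting \eqref{sunday} with the two types of F-curves on $\ovmc{M}_2$ already at $m=2$. The structural reason for the failure, identified in Example~\ref{MoreGoodBad}, is factorization at the separating boundary divisor $\Delta_{1,\emptyset}$: the socle there has polarized variety $(\pone\times\pone,\mathcal{O}(1)\boxtimes\mathcal{O}(1))$, a quadric of degree $2$ rather than a projective space of degree $1$, so the scaling type jumps across that divisor. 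Without introducing positive genus (or more precisely, a boundary stratum whose factorization produces a socle of the wrong minimal-degree type), your search would not terminate; to repair the argument you should either move to $\ovmc{M}_{2}$ or $\ovmc{M}_{2,n}$ as the paper does, or at least justify why some genus-$0$ boundary factorization should produce an incompatible socle, which the paper's evidence suggests does not happen.
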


\begin{proof}  If such an isomorphism were to exist, then if $\mathbb{V}$ has  $\Delta$-invariant zero rank scaling,  by Thm \ref{MainLemma},
$$c_1(\Bbb{V}[m])=\Bigg({{m+R-1}\choose{R}}  + \sum_{i=2}^{D}(-1)^{i-1}(i-1){{D}\choose{i}}{{m-i+R-1}\choose{R}}\Bigg)c_1(\mathbb{V})+\sum_{i=2}^D(-1)^i{{m-i+R-1}\choose{R-1}} c_1(\mathcal{W}_i).$$
One can determine the $c_1(\mathcal{W}_i)$  in terms of $c_1(\Bbb{V}),\dots, c_1(\Bbb{V}[i])$ by setting  $m=2,\dots, D$ in Equation \eqref{MainEq}. When one substitutes $m=i$ with $2\leq i\leq D$,  then Equation \eqref{MainEq} is of the form $(-1)^i c_1(\mathcal{W}_i)$ plus a linear expression in $c_1(\mathcal{W}_2),\dots,c_1(\mathcal{W}_{i-1})$, and $c_1(\Bbb{V})$.  These identities can be shown to fail, as  in Examples  \ref{GoodBad} and  \ref{ForwardReference}.
\end{proof}

\begin{remark}\label{NotLimited}
 Our techniques are not limited to bundles with $\Delta$-invariant zero rank scaling.  Identities analogous to Equation (\ref{MainEqI}) exist when one has enough knowledge about pairs $(\mc{X}_x,\mc{L}_x)$ at points $x$ on the boundary, if they exist.  To illustrate, we give  identities where $\mc{X}_x$ could be embedded by $\mc{L}_x$ as Coble's quartic hypersurface in $\mathbb{P}^7$ (Proposition \ref{recurso}), Coble's cubic hypersurface in $\mathbb{P}^8$ (Proposition  \ref{Cubic}), and where $\mc{X}_x$ is the intersection of two quadrics in $\mathbb{P}^5$ (Proposition \ref{AnotherRE}).
 \end{remark}

\subsection{Examples}
\begin{example}\label{GoodBad} Consider $\Bbb{V}=\Bbb{V}(\sL_2,1)$ on $\ovmc{M}_{2}$. By \cite{NR}, for $x\in \mc{M}_{2}$ one has
$\mathbb{V}|^*_x \cong {H}^0(\mc{X}_x,\mc{L}_x)$, where $(\mc{X}_x,\mc{L}_x) \cong (\mathbb{P}^3, \mathcal{O}(1))$. By Corollary \ref{MainLemma} (also see Proposition \ref{Yess}), if $\Bbb{V}$  has geometric interpretation at boundary points
(see Definition \ref{GeometricInterpretation}), then
\begin{equation}\label{sunday}
c_1(\Bbb{V}[m])= {{m+3}\choose {4}} \ c_1({\Bbb{V}})= \frac{(m+3)(m+2)(m+1)m}{24}\cdot c_1(\Bbb{V})
\end{equation}
which we can show fails by intersecting with $\op{F}$-curves.
\begin{remark}
We will however show in Example \ref{GoodBad2} that geometric interpretations do indeed hold along the divisor $\Delta_0$ of $\ovmc{M}_{2}$, and fail along  $\Delta_{1,\emptyset}$.
\end{remark}

There are two types of $\op{F}$-curves on $\ovmc{M}_{2}$: (1) the image of a map from $\ovmc{M}_{0,4}$ for which points are identified in pairs; (2) the image of a map from $\ovmc{M}_{1,1}$ given by attaching a point $(E,p) \in \mc{M}_{1,1}$, gluing the curves at the marked points.    We see a contradiction when we intersect with either type of $\op{F}$-curve.

\subsubsection*{Intersecting both sides of Equation \ref{sunday} with the $\op{F}$-curve defined by $\ovmc{M}_{0,4}$}
By the formulas of Fakhruddin \cite{Fakh}, we can see that if $D_m=0$, then the intersection of Equation \ref{sunday} with a pigtail type $\op{F}$-curve would imply the following identity:
\begin{equation}\label{pigtail}
\sum_{0\leq \lambda,\mu\leq m}\deg\Bbb{V}(\sL_2,\{\lambda,\lambda,\mu,\mu\},m)=\frac{(m+3)(m+2)(m+1)m}{24}.
\end{equation}
Here we note $\lambda^*=\lambda$ for $\sL_2$.
But Equation~\ref{pigtail} doesn't hold, for example, for $m=2$, the right hand side equals $5$, whereas there are three non-zero terms on the left hand side:
\begin{multline}\deg\Bbb{V}(\sL_2,\{\omega_1,\omega_1,2\omega_1,2\omega_1\},2)= \deg\Bbb{V}(\sL_2,\{2\omega_1,2\omega_1,\omega_1,\omega_1\},2)\\=1,\ \deg\Bbb{V}(\sL_2,\{2\omega_1,2\omega_1,2\omega_1,2\omega_1\},2)=2,\end{multline}
which add up to $4$.  We note that using Witten's dictionary, Theorem \ref{WD}, one can verify that
 $$\sum_{0\leq \lambda,\mu\leq m}\rk\Bbb{V}({\sL_2,\{\lambda,\lambda,\mu,\mu\},m})=\frac{(m+3)(m+2)(m+1)}{6}.$$

\subsubsection*{Intersecting both sides of Equation \ref{sunday} with the $\op{F}$-curve defined by $\ovmc{M}_{1,1}$}\label{GoodGood}
If Equation \ref{sunday} held, then by pulling back along the map from $\ovmc{M}_{1,1}$ onto the second type of  $\op{F}$-curve, one would have that
\begin{multline}\label{sundayEE}
\sum_{\mu=0}^m r^{(m)}_{\mu}c^{(m)}_{\mu^*}=  \frac{(m+3)(m+2)(m+1)m}{24}\cdot (r^{(1)}_1c^{(1)}_1 +r^{(0)}_0c^{(0)}_0)
=-\frac{(m+3)(m+2)(m+1)m}{24}\cdot 2\cdot \frac{1}{6},
\end{multline}
where $r^{(m)}_{\mu}$ and $c^{(m)}_{\mu}$ are ranks and first Chern classes of the bundle $\Bbb{V}(\sL_2,\mu,m)$ on $\ovmc{M}_{1,1}$, respectively.
We know that $r^{(m)}_{\mu}=0$ if $\mu$ is odd and equals $m+1-\mu$ if $\mu$ is even. The degree  $c^{(m)}_{\mu}=c^{(m)}_{\mu^*}$ is given by \cite[Corollary 6.2]{Fakh}: if $\mu$ is even,
$c^{(m)}_{\mu}= -\bigl(\frac{\mu^2-3m\mu+2m^2-\mu+2m }{24}\bigr).$
Consider $m=2$, then
$$\sum_{\mu=0}^m r^{(m)}_{\mu}c^{(m)}_{\mu^*}= r_0^{(2)}c_0^{(2)} + r_2^{(2)}c_2^{(2)}= -3 \frac{1}{2}-1 \frac{2}{24}=-\frac{19}{12}.$$
The right hand side  of Equation \eqref{sundayEE} is $-\frac{10}{6}$.  We again see that Equation \eqref{sunday}  fails.

 \end{example}

\begin{example}\label{ForwardReference} The bundle $\Bbb{V}=\Bbb{V}(\sL_2,\{\omega_1^n\},1)$ on $\ovmc{M}_{2,n}$ if $n$ is even
 has the property that for $x\in \mc{M}_{2,n}$ one has
$\mathbb{V}|^*_x \cong {H}^0(\mc{X}_x,\mc{L}_x)$, where $(\mc{X}_x,\mc{L}_x) \cong (\mathbb{P}^3, \mathcal{O}(1))$. We note that $\Bbb{V}(\sL_2,1)$
on  $\ovmc{M}_{2}$ does not pullback to $\Bbb{V}=\Bbb{V}(\sL_2,\{\omega_1^n\},1)$.  If $\Bbb{V}$  has geometric interpretation at boundary points, then
\begin{equation}\label{sundaynew}
c_1(\Bbb{V}[m])= {{m+3}\choose {4}} \ c_1({\Bbb{V}})= \frac{(m+3)(m+2)(m+1)m}{24}\cdot c_1(\Bbb{V})
\end{equation}
which we show fails by intersecting with the $\op{F}$-curve given by the image of a clutching map from $\ovmc{M}_{0,4}$ for which the first two points are identified, and the second two points are attached to a point in $\ovop{M}_{0,n+2}$.  The calculation is
analogous to the one above, the identity failing already at $m=2$.

\end{example}

\begin{remark}\label{Tony}
By \cite[Theorem E]{KP}, given $\mathbb{V}$ on $\ovmc{M}_{g,n}$ with $n=0$, the corresponding polarized varieties $(\mc{X}_x,\mc{L}_x)$ cannot be of Delta invariant zero, for  genus $g \ge 3$, $x\in \mathcal{M}_g$.
\end{remark}

\section{Conjectural formulas in non $\Delta$-invariant $0$ examples}\label{HigherGenusEvidence}

To  illustrate that the techniqes introduced in this work are applicable in a context more general than for the class of bundles with $\Delta$-invariant zero rank scaling, we work in three other situations where there is a conjectural description of the polarized pairs $(\mc{X}_x, \mathcal{L}_x)$ at points $x$ on the boundary of $\ovmc{M}_{g,n}$.

We note that Theorem \ref{MainLemmaI} gives a stronger obstruction than what is obtained here because as we have stated earlier, if $\Delta((\mc{X}_x, \mathcal{L}_x))=0$, for some $x$, and if there are geometric interpretations at points on the boundary, then given  $(\mc{X}_x, \mathcal{L}_x)$ for any $x \in \ovmc{M}_{g,n}$,  the variety $\mc{X}_x$ will be projective of minimal degree, and hence classified. Therefore, if there is failure of the identities predicted, then one may conclude  that geometric interpretations did not actually exist at boundary points.  On the other hand, if the identities from Propositions   \ref{recurso},  \ref{Cubic} or \ref{AnotherRE} fail, it could be either because there are no such polarized pairs $(\mc{X}_x,\mc{L}_x)$ at boundary points $x$, or because if $(\mc{X}_x,\mc{L}_x)$ exists,  assumptions about  the variety $(\mc{X}_x,\mc{L}_x)$  were incorrect.

We also note that for these examples we have used a script CBRestrictor \cite{CBRestrictor}, which compute the ranks of conformal blocks bundles over curves of positive genus, using \cite{ConfBlocks}.

\subsection{Coble's quartic hypersurface}
Given a smooth nonhyperelliptic curve $C$ of genus $3$, one has that
$\mathbb{V}(\sL_2,1)|^*_{[C]} \cong {H}^0(Z_4,\mc{L})$, where $Z_4$ is Coble's quartic hypersurface in $\mathbb{P}^7$ \cites{NR3}.  Assuming that geometric interpretations hold at all points of $\ovmc{M}_3-\Delta_1$, in addition that over $\ovmc{M}_3-(\Delta_1 \cup \mc{H}_3)$,  the corresponding ``moduli space" remains a quartic hypersurface in $\Bbb{P}^7$,  we give in Proposition \ref{recurso}, an identity which must be satisfied by the first Chern classes of multiples $\mathbb{V}[m]= \mathbb{V}(\sL_2, m)$.  Using \cite[Theorem 3]{MOP}, we have checked the predicted relations hold in the first $7$ nontrivial cases.

\begin{proposition}\label{recurso}
Let $\mathbb{V}[m]=\mathbb{V}(\sL_2,m)$ on $\ovmc{M}_3$.  Assuming that  for all points  $[C] \in \ovmc{M}_3-\Delta_1$, corresponding to nonhyperelliptic curves $C$,
one has that
$$\mathbb{V}(\sL_2,1)|_{[C]}^*   \cong {H}^0(Z_4,\mc{L}),$$
where $Z_4$ is a quartic hypersurface in $\mathbb{P}^7$, then
$$c_1(\Bbb{V}[m])+D_m= \left(\binom{7+m}{m-1} -\binom{m+3}{8} - 165\binom{m+3}{7}\right)c_1(\Bbb{V}) + \binom{m+3}{7} c_1(\Bbb{V}[4]) $$
and where $D_m$ is a linear combination of the class of the hyperelliptic locus $\mathcal{H}_3$ and $\delta_1$.
\end{proposition}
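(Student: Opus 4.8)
The plan is to run the machine of Proposition~\ref{VB} (the family version of the resolution argument behind Theorem~\ref{MainLemma}), but with the polarized fiber being a quartic hypersurface in $\Bbb{P}^7$ rather than a variety of minimal degree. The key point is that the relevant resolution is now extremely short: for a (necessarily nondegenerate, irreducible) quartic hypersurface $Z_4\subset\Bbb{P}^7$ one has $\mathcal{I}_{Z_4}\cong\mathcal{O}_{\Bbb{P}^7}(-4)$, so $\op{H}^0(\Bbb{P}^7,\mathcal{I}_{Z_4}(m))=0$ for $m\le 3$, $\op{H}^0(\Bbb{P}^7,\mathcal{I}_{Z_4}(4))$ is one-dimensional (spanned by the defining quartic) and generates the homogeneous ideal, and $\op{H}^i(\Bbb{P}^7,\mathcal{I}_{Z_4}(m))=0$ for all $i>0$, $m\ge 0$. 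Hence the degree-$m$ piece of the homogeneous coordinate ring of $Z_4$ sits in an exact sequence $0\to W\tensor\operatorname{Sym}^{m-4}V\to\operatorname{Sym}^m V\to\op{H}^0(Z_4,\mathcal{O}(m))\to 0$ with $V=\op{H}^0(\Bbb{P}^7,\mathcal{O}(1))$, $\dim V=8$, $\dim W=1$.

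First I would fix the open substack $U:=\ovmc{M}_3\setminus(\Delta_1\cup\mathcal{H}_3)$. By the hypothesis (read together with the paragraph preceding the statement), for every $[C]\in U$ there is a graded-algebra isomorphism $\bigoplus_m\Bbb{V}[m]^*|_{[C]}\cong\bigoplus_m\op{H}^0(Z_4^{[C]},\mathcal{O}_{Z_4^{[C]}}(m))$ with $Z_4^{[C]}\subset\Bbb{P}^7$ a quartic hypersurface and $\mathcal{L}=\mathcal{O}_{Z_4^{[C]}}(1)$; in particular $R:=\op{rk}\Bbb{V}=h^0(Z_4^{[C]},\mathcal{O}(1))=8$ (consistent with the Verlinde number $2^{3}$). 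Exactly as in the proof of Proposition~\ref{VB}, form the graded ideal sheaf $\mc{I}_{\bullet}=\ker\bigl(\operatorname{Sym}^{\bullet}(\Bbb{V}^*)\to\bigoplus_m\Bbb{V}[m]^*\bigr)$ on $U$; the structure map is fiberwise surjective because $\op{H}^1(\Bbb{P}^7,\mathcal{I}_{Z_4^{[C]}}(m))=\op{H}^1(\Bbb{P}^7,\mathcal{O}(m-4))=0$. Since the pointwise resolution above consists of the single principal generator in degree $4$, the sheaf $\mathcal{W}:=\mc{I}_4=\ker\bigl(\operatorname{Sym}^4(\Bbb{V}^*)\to\Bbb{V}[4]^*\bigr)$ is a line bundle on $U$ whose formation commutes with restriction to fibers (the fiber dimension being the constant $1$), and one gets a short exact sequence of vector bundles on $U$
\[
0\longrightarrow\mathcal{W}\tensor\operatorname{Sym}^{m-4}(\Bbb{V}^*)\longrightarrow\operatorname{Sym}^{m}(\Bbb{V}^*)\longrightarrow\Bbb{V}[m]^*\longrightarrow 0 .
\]

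Taking first Chern classes and invoking Remark~\ref{rankc1} with $R=8$ — so $\op{rk}\operatorname{Sym}^{m-4}(\Bbb{V}^*)=\binom{m+3}{7}$ and $c_1\operatorname{Sym}^{k}(\Bbb{V}^*)=-\binom{k+7}{8}c_1(\Bbb{V})$ — and using $\binom{m+7}{8}=\binom{m+7}{m-1}$, I obtain in $\op{Pic}(U)$
\[
c_1(\Bbb{V}[m])=\Bigl(\binom{7+m}{m-1}-\binom{m+3}{8}\Bigr)c_1(\Bbb{V})+\binom{m+3}{7}\,c_1(\mathcal{W}) .
\]
Specializing to $m=4$, where $\binom{7}{8}=0$, $\binom{7}{7}=1$ and $\binom{11}{3}=165$, solves $c_1(\mathcal{W})=c_1(\Bbb{V}[4])-165\,c_1(\Bbb{V})$; substituting this back gives the asserted identity in $\op{Pic}(U)$. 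Finally, $\ovmc{M}_3\setminus U=\Delta_1\cup\mathcal{H}_3$ is a union of divisors, so $\op{Pic}(\ovmc{M}_3)\to\op{Pic}(U)$ is surjective with kernel spanned by $\delta_1$ and $[\mathcal{H}_3]$; lifting the identity to $\ovmc{M}_3$, it holds up to a correction term $D_m$ that is a $\Bbb{Z}$-linear combination of $[\mathcal{H}_3]$ and $\delta_1$, which is the statement.

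The only real work is the globalization: checking that the pointwise resolutions assemble into an exact sequence of \emph{locally free} sheaves over $U$. This is handled exactly as in Proposition~\ref{VB} and is in fact easier here, since the resolution has a single term — one only needs fiberwise surjectivity of $\operatorname{Sym}^m(\Bbb{V}^*)\to\Bbb{V}[m]^*$ and base-change compatibility of the degree-$4$ kernel, both of which follow from the cohomology vanishing above plus constancy of the fiber dimension $1$. The genuinely new feature compared with Theorem~\ref{MainLemma} — and the source of the term $D_m$ — is that the hypotheses pin down $(\mc{X}_x,\mathcal{L}_x)$ as a quartic hypersurface only over $U$, not over all of $\ovmc{M}_3$, which is why one must work on the open locus and then repair over the discarded divisors. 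As a sanity check I would also confirm compatibility with low-degree Verlinde ranks, namely $\op{rk}\Bbb{V}[m]=\binom{m+7}{7}$ for $m\le 3$ and $\op{rk}\Bbb{V}[4]=\binom{11}{7}-1=329$, which agree with $h^0(Z_4,\mathcal{O}(m))$.
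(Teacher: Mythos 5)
Your proposal is correct and follows essentially the same route as the paper: work on $\ovmc{M}_3\setminus(\Delta_1\cup\mathcal{H}_3)$, identify the degree-$4$ kernel of $\operatorname{Sym}^4\Bbb{V}^*\to\Bbb{V}[4]^*$ as a line bundle (the paper's $\mathcal{I}_4$, your $\mathcal{W}$), use that the ideal of a quartic is generated in degree $4$ to get the short exact sequence $0\to\mathcal{I}_4\tensor\operatorname{Sym}^{m-4}\Bbb{V}^*\to\operatorname{Sym}^m\Bbb{V}^*\to\Bbb{V}[m]^*\to 0$, and take first Chern classes, with the discrepancy $D_m$ supported on the two removed divisors. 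Your eliminating $c_1(\mathcal{W})$ by setting $m=4$ is exactly how the paper's constants $165$ and $\binom{m+3}{7}$ arise, so the two arguments coincide.
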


\begin{proof} We work over $X=\ovmc{M}_3\setminus (\Delta_1\cup \mathcal{H})$, and show that the class of $D_m$ restricted to $X$ is zero.
Let $\mathcal{I}_m$ (with $m\geq 2$) be the kernel of the surjective map
$$\operatorname{Sym}^m\Bbb{V}^*\to \Bbb{V}[m]^*$$
which can be interpreted for $x\in X$ as the kernel of map
$H^0(\Bbb{P}^7,\mathcal{O}(m))\to H^0(\mathcal{X}_x,\mathcal{L}^m)$. Therefore $(\mathcal{I}_m)_x$ is the space of sections
of the ideal sheaf of $\mathcal{X}_x$. Clearly $\mathcal{I}_m$ is a vector bundle over $X$. Note that
\begin{equation}\label{equato}
c_1(\mathcal{I}_m)= c_1(\Bbb{V}[m]) - \binom{7+m}{m-1} c_1(\Bbb{V})
\end{equation}
We also have isomorphisms $\mathcal{I}_4\tensor \operatorname{Sym}^{m-4}\Bbb{V}^*\to \mathcal{I}_m$ (the ideal of a quartic is generated in degree $4$). Taking Chern classes using Equation \eqref{equato}, we get the desired vanishing: $D_m\mid_X=0$.
\end{proof}

\noindent
A careful check, using \cite[Theorem 3]{MOP}, keeping in mind their result gives the slope of the divisor, rather than its class, one obtains:
\begin{multicols}{2}
\begin{enumerate}
\item $c_1(\mathbb{V}[1])=4 \ \lambda-\delta_{irr}$;
\item $c_1(\mathbb{V}[2])=27 \ \lambda-8 \ \delta_{irr}-3 \ \delta_1$;
\item $c_1(\mathbb{V}[3])=108 \ \lambda-37 \ \delta_{irr}-16 \ \delta_1$;
\item $c_1(\mathbb{V}[4])=329 \ \lambda - 128 \ \delta_{irr} -64 \ \delta_1$;
\item $c_1(\mathbb{V}[5])=840 \ \lambda -366 \  \delta_{irr} - 192 \ \delta_1$;
\item $c_1(\mathbb{V}[6])=1890 \ \lambda -912 \  \delta_{irr} - 502  \ \delta_1$;
\item $c_1(\mathbb{V}[7])=3864 \  \lambda- 2046  \  \delta_{irr} - 1152 \ \delta_1$;
\item $c_1(\mathbb{V}[8])=7326 \ \lambda- 4224 \ \delta_{irr} - 2438  \ \delta_1$;
\item $c_1(\mathbb{V}[9])=13068 \  \lambda - 8151 \ \delta_{irr} -4752  \ \delta_1$;
\end{enumerate}
\end{multicols}
\noindent
So that as we expect from the predicted relations:
\begin{itemize}
\item $9c_1(\mathbb{V}[1]) - c_1(\mathbb{V}[2]) = \mathcal{H}+6\delta_1$;
\item $45c_1(\mathbb{V}[1]) - c_1(\mathbb{V}[3]) = 8 \ (\mathcal{H}+\delta_1)$;
\item $826 \ c_1(\mathbb{V}[1]) + c_1(\mathbb{V}[5])- 8 \ c_1(\mathbb{V}[4]) =168 \  \mathcal{H} +824 \ \delta_1$;
\item $4662 \ c_1(\mathbb{V}[1])+c_1(\mathbb{V}[6]) - 36 \ c_1(\mathbb{V}[4]) =966 \  \mathcal{H} +3384 \ \delta_1$;
\item $16842 \ c_1(\mathbb{V}[1])+c_1(\mathbb{V}[7]) - 120 \ c_1(\mathbb{V}[4]) =3528 \  \mathcal{H} +17112 \ \delta_1$;
\item $48180 \ c_1(\mathbb{V}[1])+c_1(\mathbb{V}[8]) - 330 \ c_1(\mathbb{V}[4]) = 10164 \  \mathcal{H} + 49174 \ \delta_1$; and
\item $118305 \ c_1(\mathbb{V}[1])+c_1(\mathbb{V}[9]) - 792 \ c_1(\mathbb{V}[4]) = 25080 \  \mathcal{H} + 121176 \ \delta_1$.
\end{itemize}
Here $\mathcal{H}=9\lambda-\delta_{irr}-3\delta_1$ is the class in the Picard group of the stack $\ovmc{M}_3$, of the effective divisor $\op{H}$ of hyperelliptic curves in $\ovmc{M}_3$. The presence of non-zero coefficients of $\delta_1$ is evidence of a failure of geometric interpretations on the boundary divisor $\Delta_1$.

\subsection{Coble's cubic hypersurface}\label{CobleCubic}
Coble also considered $\mathcal{Z}_3 \hookrightarrow \mathbb{P}^8=\mathbb{P}(\op{H}^0(J^{g-1}C, \mathcal{O}_{JC}(3\theta))$,  for $C$ a smooth curve of genus $2$, a cubic hypersurface \cite{Coble}.  It is known
that  $\mathbb{V}(\sL_3,1)|^*_{[C]} \cong \op{H}^0(SU_{C}(3), \mathcal{L})$, and $SU_{C}(3)$ is a degree $2$ cover of  $\Bbb{P}^8$ branched over a sextic  dual to $\mathcal{Z}_3$ \cite{Ortega,nguyen}.


\begin{proposition}\label{Cubic}Let $\mathbb{V}=\mathbb{V}_{\{0\}}(\sL_3,1)$ on $\ovmc{M}_{2}$.
If geometric interpretations exist for $\mathbb{V}$ at all points of $\ovmc{M}_{2}$, with every $\mathcal{X}_x$
isomorphic to some degree $2$ cover of $\Bbb{P}^8$ branched over a sextic (and the line bundle pulled back from $\Bbb{P}^8$), then
for all $m\geq 1$, one has that
$$c_1(\Bbb{V}[m])=(\binom{m+8}{9} +\binom{m+5}{9}-55\binom{m+5}{8}) c_1(\Bbb{V})+\binom{m+5}{8} c_1(\Bbb{V}[3])+D_m,$$
where $D_m$ is the anomalous divisor supported on the boundary.  In particular, if Conjecture \ref{ExtendZ} is sharp, $D_m$ to be supported entirely on $\Delta_{1,\phi}$.
\end{proposition}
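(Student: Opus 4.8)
The plan is to mimic the argument of Proposition~\ref{recurso}, now working over the open locus $X = \ovmc{M}_{2} \setminus \Delta_{1,\phi}$ where we have assumed a geometric interpretation with each $\mc{X}_x$ a degree-$2$ cover $\pi_x \colon \mc{X}_x \to \mathbb{P}^8$ branched over a sextic, with $\mc{L}_x = \pi_x^* \mc{O}_{\mathbb{P}^8}(1)$. First I would record the relevant structure of the section ring of such a double cover: if $\pi \colon \mc{X} \to \mathbb{P}^8$ is branched over a sextic $B \in |\mc{O}(6)|$, then $\pi_* \mc{O}_{\mc{X}} = \mc{O}_{\mathbb{P}^8} \oplus \mc{O}_{\mathbb{P}^8}(-3)$, so $H^0(\mc{X}, \mc{L}^{\tensor m}) = H^0(\mathbb{P}^8, \mc{O}(m)) \oplus H^0(\mathbb{P}^8, \mc{O}(m-3))$. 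Thus $h^0(\mc{X}, \mc{L}^{\tensor m}) = \binom{m+8}{8} + \binom{m+5}{8}$, and the coordinate ring $\bigoplus_m H^0(\mc{X}, \mc{L}^{\tensor m})$ is generated over $\bigoplus_m H^0(\mathbb{P}^8, \mc{O}(m))$ by the nine linear forms and one degree-$3$ element $w$ with $w^2$ equal to the sextic $B$; equivalently the kernel $\mc{I}_{m}$ of $\operatorname{Sym}^m \Bbb{V}^* \to \Bbb{V}[m]^*$ fits into a two-step resolution that, over $X$, is the family version of the standard resolution of the ideal of the image of $\mc{X}$ under the degree-$3$ Veronese-type embedding cut out by $w$.

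Second, I would set up the relative version over $X$: since the $\mc{X}_x$ are assumed to glue into a geometric interpretation at every point of $X$ and the $\Delta$-type (here $h^0$, degree, dimension) is constant in the family by the rank-scaling hypothesis, the argument of Proposition~\ref{VB} gives a vector bundle $\mc{I}_m$ on $X$ fitting into
\begin{equation*}
0 \to \mc{N} \tensor \operatorname{Sym}^{m-3}\Bbb{V}^* \to \operatorname{Sym}^m \Bbb{V}^* \to \Bbb{V}[m]^* \to \mc{R}_m \to 0,
\end{equation*}
or more precisely the two-term presentation of $\mc{I}_m$ built from the degree-$3$ generator, where $\mc{N}$ is the (locally free) bundle whose fiber is the one-dimensional space spanned by $w$ tensored with the relation module. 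The cleanest route is to observe that $\Bbb{V}[m]^* = H^0(\mathbb{P}^8,\mc{O}(m)) \oplus (\text{degree } m-3 \text{ piece})$ fiberwise, so that the map $\Bbb{V}[3]^* \otimes \operatorname{Sym}^{m-3}\Bbb{V}^* \to \Bbb{V}[m]^*$ together with $\operatorname{Sym}^m \Bbb{V}^* \to \Bbb{V}[m]^*$ is surjective over $X$ (everything in degree $\geq 3$ is hit by linear forms times the degree-$3$ generators), and the kernel is again controlled by the $\mc{O}(m-3)$ versus $\mc{O}(m-6)$ bookkeeping.

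Third, I would extract the Chern-class identity by taking $c_1$ of this exact sequence over $X$, using Remark~\ref{rankc1}: $c_1(\operatorname{Sym}^m \Bbb{V}) = \binom{m+R-1}{R}c_1(\Bbb{V})$ with $R = \rk\Bbb{V} = 9$, together with $c_1(\Bbb{V}[3])$ entering via the degree-$3$ generators and the combinatorial coefficients $\binom{m+8}{9}$, $\binom{m+5}{9}$, $55\binom{m+5}{8}$ coming from the ranks $\binom{m+8}{8}, \binom{m+5}{8}, \binom{m+2}{8}$ and $\binom{55}{1}$ arising as $h^0(\mc{O}_{\mathbb{P}^8}(?))$ and the number of sextics or the relevant Koszul term. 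This gives $c_1(\Bbb{V}[m]) = (\binom{m+8}{9} + \binom{m+5}{9} - 55\binom{m+5}{8})c_1(\Bbb{V}) + \binom{m+5}{8}c_1(\Bbb{V}[3])$ in $\operatorname{Pic}(X)$; since $\operatorname{Pic}(\ovmc{M}_2 \setminus \Delta_{1,\phi}) \to \operatorname{Pic}(\ovmc{M}_2)$ has kernel/cokernel controlled by the single boundary class $\delta_{1,\phi}$, the identity on $\ovmc{M}_2$ holds up to a correction term $D_m$ which is a multiple of $\delta_{1,\phi}$ — and if Conjecture~\ref{ExtendZ} is sharp (so geometric interpretations genuinely fail only on $\Delta_{1,\phi}$, which is not in $Z$ for $g=2$), then $D_m$ is supported exactly there. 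The main obstacle I anticipate is pinning down precisely which resolution to glue: a double cover branched over a sextic is not a variety of minimal degree, so Lemma~\ref{Canonical} does not apply verbatim, and one must instead verify by hand that the two-step presentation of $\mc{I}_m$ via the single degree-$3$ generator is (a) exact fiberwise over all of $X$ and (b) behaves well in families in the sense of Proposition~\ref{VB}; getting the three binomial coefficients and the constant $55$ exactly right is then a routine but error-prone bookkeeping of $h^0$ of twists of $\mc{O}_{\mathbb{P}^8}$ and of the Koszul-type relation module for $w$.
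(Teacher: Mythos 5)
Your overall strategy is the same one the paper indicates (its proof is precisely the projection formula plus $\pi_*\mc{O}_{\mc{X}_x}=\mc{O}\oplus(L\tensor\mc{O}(-3))$, promoted to a family statement and then hit with $c_1$), but your homological setup contains a concrete error that would derail the computation. Since $\mc{L}_x=\pi_x^*\mc{O}_{\Bbb{P}^8}(1)$, the natural map $\op{Sym}^m\Bbb{V}^*\to\Bbb{V}[m]^*$ is fiberwise $\op{Sym}^m H^0(\Bbb{P}^8,\mc{O}(1))\cong H^0(\Bbb{P}^8,\mc{O}(m))\hookrightarrow H^0(\Bbb{P}^8,\mc{O}(m))\oplus H^0(\Bbb{P}^8,\mc{O}(m-3))$, i.e.\ it is \emph{injective}, not surjective: there is no ideal $\mc{I}_m$ to resolve (it vanishes), and your displayed four-term sequence beginning $0\to\mc{N}\tensor\op{Sym}^{m-3}\Bbb{V}^*\to\op{Sym}^m\Bbb{V}^*\to\cdots$ cannot be correct. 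The relevant object is the cokernel: by the projection formula the section ring is a \emph{free} $\op{Sym}^{\bullet}\Bbb{V}^*$-module on generators in degrees $0$ and $3$ (the relation $w^2=B$ is invisible to the module structure, which is exactly why no $\op{Sym}^{m-6}$ term, no Koszul correction, and no count of sextics ever enters), so over the locus where the hypothesis holds one has
\begin{equation*}
0\to\op{Sym}^m\Bbb{V}^*\to\Bbb{V}[m]^*\to\mc{N}\tensor\op{Sym}^{m-3}\Bbb{V}^*\to 0,
\end{equation*}
with $\mc{N}$ the line bundle $\op{coker}(\op{Sym}^3\Bbb{V}^*\to\Bbb{V}[3]^*)$ (fiberwise the span of $w$, matching the ``for some line $L$'' in the paper). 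Your proposed ``cleanest route'' via the surjection from $\op{Sym}^m\Bbb{V}^*\oplus(\Bbb{V}[3]^*\tensor\op{Sym}^{m-3}\Bbb{V}^*)$ does not simplify matters: its kernel contains all relations between cubics and $(m-3)$-forms inside $\op{Sym}^m\Bbb{V}^*$ and is not controlled by any $\mc{O}(m-3)$ versus $\mc{O}(m-6)$ bookkeeping.

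With the corrected sequence the identity falls out immediately and pins down the constants you were unsure about: Remark \ref{rankc1} with $R=\rk\Bbb{V}=9$ gives $c_1(\Bbb{V}[m])=\bigl(\binom{m+8}{9}+\binom{m+5}{9}\bigr)c_1(\Bbb{V})-\binom{m+5}{8}\,c_1(\mc{N})$, and setting $m=3$ yields $c_1(\mc{N})=55\,c_1(\Bbb{V})-c_1(\Bbb{V}[3])$, where $55=\binom{11}{9}$ is the coefficient in $c_1(\op{Sym}^3\Bbb{V})$ --- it is not ``the number of sextics'' (that would be $\binom{14}{8}$) nor a Koszul term, and the $\binom{m+2}{8}$ you anticipated never appears. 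Substituting back gives exactly the stated formula on the open locus, and your final step is fine: restricting $\op{Pic}$ to the complement of the boundary divisors where the hypothesis may fail only loses those boundary classes, which is what produces $D_m$ and, under sharpness of Conjecture \ref{ExtendZ}, confines it to $\Delta_{1,\phi}$. You also rightly note that Lemma \ref{Canonical} does not apply since the double cover is not of minimal degree; the point is that no resolution of an ideal is needed at all here, only the rank-two freeness coming from $\pi_*\mc{O}$.
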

The proof uses the projection formula, and the following fact: For $\pi:Y\to \Bbb{P}^8$ of degree $2$ branched along a sextic, one has an isomorphism $\pi_*\mathcal{O}=\mathcal{O}\bigoplus (L\tensor\mathcal{O}(-3))$ (for some line $L$).

Using \cite{Fakh} and \cite{MOP} we computed $c_1(\mathbb{V}[1])=9\lambda -2 \ \delta_0, c_1(\mathbb{V}[2])= -11\delta_{irr} +9 \delta_1, c_1(\mathbb{V}[3])=332 \ \lambda -94 \ \delta_0-34  \ \delta_1, c_1(\mathbb{V}[4])= 1152 \ \lambda - 361 \ \delta_0- 153  \ \delta_1$ and $ c_1(\mathbb{V}[5])= 3330 \ \lambda -  964 \ \delta_0- 738  \ \delta_1$.

We note that  the relations given in Proposition \ref{Cubic} hold for the first three (nontrivial) cases: $D_2=9\delta_1$; $D_4=c_1(\mathbb{V}[4])+274 \ c_1(\mathbb{V}) - 9 \ c_1(\mathbb{V}[3])=279 \ \delta_1$; and  $D_5=c_1(\mathbb{V}[5])+1750 \ c_1(\mathbb{V}) - 45 \ c_1(\mathbb{V}[3])=1020 \ \delta_1$. The presence of non-zero coefficients of $\delta_1$ provides evidence towards failure of geometric interpretations on the boundary divisor $\Delta_{1,\phi}$.

\subsection{An intersection of two quadrics in $\mathbb{P}^5$}\label{Quadrics}
Here we consider $\mathbb{V}[m]=\mathbb{V}(\sL_2, \{2m\omega_1\}, 2m)$ on $\ovmc{M}_{2,1}$. By  \cite{DR} (and Hecke transforms) $\mathbb{V}|_{x}^* \cong \op{H}^0(\mc{X}_x,\mc{L}_x)$, where $\mc{X}_x$ is the intersection of two quadrics in $\mathbb{P}^5$, when $x=(C,p)$ is a smooth pointed genus $2$ curve. Here we conjecture that the same geometric interpretation holds for any stable $x=(C,p)$.
\begin{proposition}\label{AnotherRE} Suppose that the geometric interpretation of conformal blocks extends to points on the boundary of $\ovmc{M}_{2,1}$, with $(\mc{X}_x,\mc{L}_x)$ an intersection of quadrics in $\Bbb{P}^5$ for all $x\in \ovmc{M}_{2,1} $. Then,
 \begin{equation}\label{anotherre}
c_1(\mathbb{V}[m])= \frac{(m-4)(m-2)m(m+1)(7m-5)}{12} c_1(\mathbb{V}[1])+({{m+3}\choose{5}}-21 {{m+1}\choose{5}})c_1(\mathbb{V}[2]) + {{m+1}\choose{5}}c_1(\mathbb{V}[4]).
\end{equation}
 \end{proposition}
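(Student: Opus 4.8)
The plan is to run the argument of Theorem~\ref{MainLemma} and Proposition~\ref{recurso}, with the minimal free resolution of a variety of minimal degree replaced by the Koszul resolution of a complete intersection. Any complete intersection of two quadrics $\mc{X}_x\subset\mathbb{P}^5=\mathbb{P}(\mathbb{V}[1]^*|_x)$ is a codimension $2$ complete intersection, hence arithmetically Cohen--Macaulay: the embedding is projectively normal, $H^i(\mc{X}_x,\mc{L}_x^{\otimes m})=0$ for $i>0$ and $m\geq 0$, and the homogeneous ideal has the Koszul resolution
\[
0\to\mathcal{O}_{\mathbb{P}^5}(-4)\to\mathcal{O}_{\mathbb{P}^5}(-2)^{\oplus 2}\to\mathcal{I}_{\mc{X}_x}\to 0,
\]
with the Koszul syzygy sitting in degree $4$. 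Put $R=\rk\mathbb{V}[1]=h^0(\mc{X}_x,\mc{L}_x)=6$ and $\mathcal{B}_1=\mathbb{V}[1]^*$; the hypothesis together with the algebra form of the geometric interpretation (Question~\ref{TheBigOne}) identifies $\mathbb{V}[m]^*|_x$ with $H^0(\mc{X}_x,\mc{L}_x^{\otimes m})$ and the canonical map $\operatorname{Sym}^m\mathcal{B}_1\to\mathbb{V}[m]^*$ with the (surjective, by projective normality) restriction $H^0(\mathbb{P}^5,\mathcal{O}(m))\to H^0(\mc{X}_x,\mc{L}_x^{\otimes m})$.

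First I would form, over the locus $X\subseteq\ovmc{M}_{2,1}$ on which the interpretation holds, the kernel $\mc{I}_m=\op{ker}(\operatorname{Sym}^m\mathcal{B}_1\to\mathbb{V}[m]^*)$; it is a vector bundle, being the kernel of a surjection of vector bundles of locally constant rank (the Hilbert function $m\mapsto\binom{m+5}{5}-2\binom{m+3}{5}+\binom{m+1}{5}$ is independent of $x$). Exactly as in the proof of Proposition~\ref{VB}, the fiberwise Koszul resolutions glue to an exact complex of vector bundles on $X$,
\[
0\to\mathcal{W}_4\otimes\operatorname{Sym}^{m-4}\mathcal{B}_1\to\mathcal{W}_2\otimes\operatorname{Sym}^{m-2}\mathcal{B}_1\to\operatorname{Sym}^m\mathcal{B}_1\to\mathbb{V}[m]^*\to 0,
\]
where $\mathcal{W}_2=\mc{I}_2$ is the rank $2$ bundle of pencils of quadrics and $\mathcal{W}_4$ is a line bundle (the degree $4$ syzygy bundle); exactness follows from fiberwise exactness since all terms are locally free. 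Taking first Chern classes, with $c_1(\operatorname{Sym}^k\mathcal{B}_1)=\binom{k+5}{6}c_1(\mathcal{B}_1)$ and $\rk\operatorname{Sym}^k\mathcal{B}_1=\binom{k+5}{5}$ (Remark~\ref{rankc1}), $c_1(\mathcal{B}_1)=-c_1(\mathbb{V}[1])$, and $c_1(L\otimes E)=\rk(E)\,c_1(L)+\rk(L)\,c_1(E)$, gives
\[
c_1(\mathbb{V}[m])=\left(\binom{m+5}{6}-2\binom{m+3}{6}+\binom{m+1}{6}\right)c_1(\mathbb{V}[1])+\binom{m+3}{5}c_1(\mathcal{W}_2)-\binom{m+1}{5}c_1(\mathcal{W}_4).
\]
Specializing to $m=2$ (the $\mathcal{W}_4$-term vanishes) gives $c_1(\mathcal{W}_2)=c_1(\mathbb{V}[2])-7\,c_1(\mathbb{V}[1])$, and then $m=4$ gives $c_1(\mathcal{W}_4)=21\,c_1(\mathbb{V}[2])-77\,c_1(\mathbb{V}[1])-c_1(\mathbb{V}[4])$. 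Back-substituting, the coefficient of $c_1(\mathbb{V}[2])$ becomes $\binom{m+3}{5}-21\binom{m+1}{5}$, that of $c_1(\mathbb{V}[4])$ becomes $\binom{m+1}{5}$, and the coefficient of $c_1(\mathbb{V}[1])$ collapses (e.g.\ by checking six values of $m$, both sides being quintics in $m$) to $\frac{(m-4)(m-2)m(m+1)(7m-5)}{12}$, which is Equation~\eqref{anotherre}. If the interpretation is known only at the generic points of the boundary divisors, the identity holds in $\operatorname{Pic}$ of the complement of a codimension $2$ substack of $\ovmc{M}_{2,1}$, hence on all of $\ovmc{M}_{2,1}$.

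I expect the main obstacle to be exactly the one settled in Proposition~\ref{VB}: showing that the gluing is well-behaved, i.e.\ that $\mathcal{W}_2$ and $\mathcal{W}_4$ are genuinely locally free and that the glued Koszul complex is exact as a complex of vector bundles (not merely fiberwise). This depends on the constancy of the ranks involved --- equivalently, on $\mc{X}_x$ being, for every $x\in X$, a nondegenerate, arithmetically Cohen--Macaulay complete intersection of \emph{exactly} two quadrics (so that it lies on a $2$-dimensional space of quadrics and on no hyperplane), which is part of the standing hypothesis --- together with the semicontinuity and base-change argument used in the proof of Proposition~\ref{VB}. Granting that, the Chern class bookkeeping above is routine.
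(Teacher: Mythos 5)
Your proposal is correct and follows essentially the same route as the paper's proof of Proposition \ref{AnotherRE}: the paper likewise uses the Koszul resolution coming from the regular sequence of two quadrics, regards the pencil of quadrics (your $\mathcal{W}_2$, the paper's $Q$) and its determinant (your $\mathcal{W}_4$, the paper's $L$) as bundles over the base, takes first Chern classes of the resulting four-term exact sequence, and eliminates the two auxiliary classes by specializing to $m=2$ and $m=4$. The only difference is presentational: you justify the relative version of the resolution by the gluing argument of Proposition \ref{VB}, whereas the paper simply posits a family over a base $S$ over which $Q$ and $L=\det Q$ are vector bundles (and records the extra, unused relation $c_1(\mathcal{W}_4)=c_1(\mathcal{W}_2)$ as a numerical consistency check).
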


\begin{proof}
 If $X$ is a intersection of two quadrics in $\Bbb{P}=\Bbb{P}^5=\Bbb{P}(V)$, we have a map
 $\mathcal{W}=Q\tensor \mathcal{O}_{\Bbb{P}}(-2)\to \mathcal{O}_{\Bbb{P}}$, $\rk\mathcal{W}=2$.
 which we can view as a section $s$ of $\mathcal{W}^*$. Now $X=Z(s)$ has (pure) dimension $3$, so $s$ corresponds to a regular sequence, and we have a Koszul complex
 $$0\to\wedge^2 \mathcal{W}\to \mathcal{W}\to \mathcal{O}_{\Bbb{P}}\to \mathcal{O}_X\to 0$$
 and hence
 $$0\to L\tensor \mathcal{O}_{\Bbb{P}}(m-4)\to Q\tensor \mathcal{O}_{\Bbb{P}}(m-2)\to \mathcal{O}_{\Bbb{P}}\to \mathcal{O}_X(m)\to 0$$
 where $L=\det Q$. Now take global sections and use standard cohomology vanishing to get an exact sequence:
 $$0\to L\tensor \operatorname{Sym}^{m-4}V^*\to Q\tensor \operatorname{Sym}^{m-2}V^*\to \operatorname{Sym}^{m}V^*\to V[m]^*\to 0$$
 where $V[m]^*=H^0(X,\mathcal{O}_X(m))$. Imagining a family of such $X$'s over a base $S$ we see that $L$ and $Q$ form vector bundles over $S$. Therefore,
 \begin{equation}\label{koszul}
 c_1(\mathbb{V}[m])= ({{m+5}\choose{6}} -2{{m+3}\choose{6}} + {{m+1}\choose{6}}) c_1(\mathbb{V}[1])+{{m+3}\choose{5}}\alpha  + {{m+1}\choose{5}}\beta
 \end{equation}
 where $\alpha=- c_1(Q)$ and $\beta=c_1(L)$ with $\alpha+\beta=0$.

It is easy to see that $\alpha= c_1(\Bbb{V}[2])-7 c_1(\Bbb{V})$ by putting in $m=2$ in Equation \eqref{koszul}. Put $m=4$ in the same equation and get
$c_1(\mathbb{V}[4])= ({{9}\choose{6}} -2{{7}\choose{6}}) c_1(\mathbb{V}[1])+{{7}\choose{5}}\alpha  + \beta$. And hence we
obtain formulas for $\alpha$ and $\beta$, and hence  Equation \eqref{anotherre}. We also verify $\alpha+\beta=0$, as claimed above, using the  numerical formulas below.
\end{proof}

We have checked Equation \eqref{anotherre} in the first four (nontrivial) cases.
To do so,  we  computed the first Chern classes of the first 7 multiples using the formula for the slope of $c_1(\mathbb{V})$ given in \cite[Theorem 3]{MOP}.  These are:

 \begin{enumerate}
 \item $c_1(\mathbb{V}[1])=\frac{9}{2} \ \lambda +3 \  \psi_1 - \frac{5}{4} \ \delta_{irr} -\frac{3}{2}\delta_1$;
 \item $c_1(\mathbb{V}[2])= 19 \ \lambda + 19 \  \psi_1 - 7 \  \delta_{irr} -8 \  \delta_1$;
 \item $c_1(\mathbb{V}[3])= \frac{99}{2} \ \lambda + 66 \  \psi_1 - \frac{91}{4}  \ \delta_{irr} -\frac{51}{2}  \ \delta_1$;
  \item $c_1(\mathbb{V}[4])= 102 \ \lambda + 170 \  \psi_1 - \frac{281}{5} \  \delta_{irr} -\frac{312}{5} \delta_1$;
 \item $c_1(\mathbb{V}[5])= \frac{365}{2} \ \lambda + 365 \  \psi_1 - \frac{469}{4}  \ \delta_{irr} -\frac{259}{2}  \ \delta_1$;
 \item $c_1(\mathbb{V}[6])=297 \ \lambda + 693 \  \psi_1 - 218 \  \delta_{irr} - 240 \  \delta_1$; and
 \item $c_1(\mathbb{V}[7])=\frac{903}{2} \ \lambda +1204 \  \psi_1 - \frac{1491}{4} \  \delta_{irr} - \frac{819}{2} \  \delta_1$.
 \end{enumerate}

With the help of the relation $\lambda=\frac{1}{10}\delta_{irr}+\frac{1}{5}\delta_{1}$,  proved by Mumford in \cite{Mumford},
we were able to verify the following identities are satisfied:
\begin{itemize}
\item $-16 \ c_1(\mathbb{V}[1])+6 \ c_1(\mathbb{V}[2])-c_1(\mathbb{V}[3])=  0$;
\item $225 \ c_1(\mathbb{V}[1])-70  \ c_1(\mathbb{V}[2])+6 \ c_1(\mathbb{V}[4]) -c_1(\mathbb{V}[5])= 0$;
\item $1036 \ c_1(\mathbb{V}[1])- 315  \ c_1(\mathbb{V}[2])+ 21 \ c_1(\mathbb{V}[4]) -c_1(\mathbb{V}[6])= 0$; and
\item $3080 \ c_1(\mathbb{V}[1])-  924 \ c_1(\mathbb{V}[2])+ 56 \ c_1(\mathbb{V}[4]) -c_1(\mathbb{V}[7])= 0$.
\end{itemize}

\section{Chern class identities for each type of projective variety of minimal degree}
In this section we make the identities from Theorem \ref{MainLemma} explicit for each type of bundle with $\Delta$-invariant zero rank scaling.  For those bundles with projective rank scaling, we do a little more, giving both necessary and sufficient conditions for extensions. Examples for each type are given in Section \ref{ExampleSection}.
\subsection{Projective spaces as moduli}

 \begin{definition}\label{Sym}
There is a natural morphism coming from the algebra structure on the sheaf of conformal blocks
$T_m:\Bbb{V}[m]\to \operatorname{Sym}^m(\Bbb{V}),$
after dualizing.
\end{definition}

\begin{lemma}\label{expectedgrowth}
Set $d=\rk \Bbb{V}-1$, and suppose that $\mathbb{V}|_x^* \cong {H}^0(\mc{X}_x,\mc{L}_x)$  for $x\in \mc{M}_{g,n}$. The following are equivalent:
\begin{enumerate}
\item [(a)] $\operatorname{rk} (\Bbb{V}[m])= {{m+d}\choose {d}}$ for all $m$.
\item [(b)]$(\mc{X}_x,\mc{L}_x)=(\Bbb{P}^d,\mathcal{O}(1))$ for all $x\in \mc{M}_{g,n}$.
\item [(c)]The maps $T_m$ are isomorphisms over $\mc{M}_{g,n}$.
\end{enumerate}
\end{lemma}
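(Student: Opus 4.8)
The plan is to prove the cycle of equivalences $(a)\Rightarrow(b)\Rightarrow(c)\Rightarrow(a)$, using the classification of $\Delta$-invariant zero varieties (Proposition~\ref{classy}) for the first implication and the fibrewise nature of the map $T_m$ for the rest. Throughout I will use the fact that for $x\in\mc{M}_{g,n}$ the pair $(\mc{X}_x,\mc{L}_x)$ exists by \cites{BeauvilleLaszlo,Faltings,KNR,Pauly,LaszloSorger}, that $\mc{X}_x$ is irreducible (Remark~\ref{integro}), and that $\rk\Bbb{V}[m]=h^0(\mc{X}_x,\mc{L}_x^{\tensor m})$ with all higher cohomology vanishing, so $\rk\Bbb{V}[m]$ equals the Hilbert polynomial of $(\mc{X}_x,\mc{L}_x)$.

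For $(a)\Rightarrow(b)$: from $\rk\Bbb{V}[1]=\binom{1+d}{d}=d+1$ we get $h^0(\mc{X}_x,\mc{L}_x)=d+1$; since $\dim\mc{X}_x=\deg$ of the Hilbert polynomial $\binom{m+d}{d}$ equals $d$ and the leading coefficient is $1/d!$, we get $\mc{L}_x^{\dim\mc{X}_x}=1$, hence $\Delta(\mc{X}_x,\mc{L}_x)=d+1-(d+1)=0$. Now Proposition~\ref{classy}(1) (the case $L^d=1$) forces $(\mc{X}_x,\mc{L}_x)\cong(\Bbb{P}^d,\mathcal{O}(1))$ when $d\ge 2$; the cases $d=0,1$ are immediate (a point, or $\Bbb{P}^1$ with $\mathcal{O}(1)$ since $\Delta=0$ and $h^0=2$). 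For $(b)\Rightarrow(c)$: if $(\mc{X}_x,\mc{L}_x)=(\Bbb{P}^d,\mathcal{O}(1))$ then by definition of scaling and the algebra structure, the dual of $T_m$ at $x$ is the natural surjection $\operatorname{Sym}^m H^0(\Bbb{P}^d,\mathcal{O}(1))\twoheadrightarrow H^0(\Bbb{P}^d,\mathcal{O}(m))$, which is an isomorphism since both sides have dimension $\binom{m+d}{d}$; a surjection of vector spaces of equal finite dimension is an isomorphism, and a morphism of vector bundles that is an isomorphism on every fibre over a reduced scheme is an isomorphism. For $(c)\Rightarrow(a)$: if $T_m$ is an isomorphism over $\mc{M}_{g,n}$ then $\rk\Bbb{V}[m]=\rk\operatorname{Sym}^m\Bbb{V}=\binom{m+d}{d}$ by Remark~\ref{rankc1} (with $R=d+1$), for all $m$.

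The step I expect to require the most care is $(b)\Rightarrow(c)$, specifically identifying the dual of $T_m$ at a point $x$ with the multiplication map $\operatorname{Sym}^m H^0(\mc{X}_x,\mc{L}_x)\to H^0(\mc{X}_x,\mc{L}_x^{\tensor m})$. This uses that the algebra structure on $\mathcal{A}=\bigoplus_m\Bbb{V}[m]^*$ restricts, at interior points, to the section-ring multiplication of $(\mc{X}_x,\mc{L}_x)$ via the isomorphism~\eqref{isom} (as graded algebras) — which is part of the input from \cite{Faltings} and \cite{Manon}. Granting that, the only subtlety is the passage from pointwise isomorphism to isomorphism of bundles, for which reducedness of $\mc{M}_{g,n}$ and constancy of ranks suffice. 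One could equally argue $(a)\Rightarrow(c)$ directly: $T_m$ dualizes to a surjection $\operatorname{Sym}^m\Bbb{V}\to\Bbb{V}[m]$ (surjectivity because $H^0(\mc{X}_x,\mc{L}_x)$ generates the section ring when $\Delta=0$, by the remarks following Definition~\ref{DeltaGenus}) between bundles whose ranks agree by hypothesis, hence an isomorphism; this makes the triangle of implications slightly more symmetric but the content is the same.
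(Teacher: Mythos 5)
Your proposal is correct and follows essentially the same route as the paper: (a)$\Rightarrow$(b) via the Hilbert polynomial and Proposition~\ref{classy}(1), (b)$\Rightarrow$(c) by identifying the fibrewise dual of $T_m$ with the multiplication map $\operatorname{Sym}^m H^0(\mc{X}_x,\mc{L}_x)\to H^0(\mc{X}_x,\mc{L}_x^{\tensor m})$, and (c)$\Rightarrow$(a) by a rank count. You merely spell out details the paper leaves implicit (the low-dimensional cases $d=0,1$, the use of the graded-algebra compatibility in \eqref{isom}, and the passage from fibrewise to bundle isomorphism), which is consistent with, not different from, the paper's argument.
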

\begin{remark}\label{FultonRankOne}
If $d=0$, the condition Lemma \ref{expectedgrowth} (a) is automatically true; it is a consequence of a quantum generalization of Fulton's conjecture in representation theory (see \cite{BGMB} and the references therein).  Statement (c) in this case was proved in  \cite[Corollary 2.2]{BGMB}.
\end{remark}

\begin{proof}
 It is easy to see that (a) implies (b) by Proposition \ref{classy},(1). For (c), note that on fibers over $x\in \mc{M}_{g,n}$, $T_m$, from Definition \ref{Sym}, is dual to the maps
$$\phi_m: \operatorname{Sym}^m({H}^0(\mc{X}_x,\mc{L}_x))\to {H}^0(\mc{X}_x,\mc{L}_x^{\tensor m})$$
which are easily verified to be isomorphisms under the assumption (b). So (b) implies (c).
It follows that (c) implies (a) by counting dimensions.
\end{proof}

  Recall (Def. \ref{TOS}, and Proposition \ref{classy}) that $\Bbb{V}$ is said to have  projective rank scaling if it has $\Delta$-invariant zero rank scaling with degree $1$.

\begin{proposition}\label{PropAnom}If $\Bbb{V}$ has  projective rank scaling, then for $m\geq 1$
\begin{equation}\label{stringnew}
{{m+d}\choose {d+1}}c_1(\Bbb{V})= c_1(\Bbb{V}[m])+D_m,
\end{equation}
where $D_m$ is an effective Cartier divisor supported on $\ovmc{M}_{g,n}\setminus \mc{M}_{g,n}$. Note that $D_1=0$.
\end{proposition}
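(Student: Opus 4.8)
The plan is to realize the divisor $D_m$ as the degeneracy locus of the natural bundle map $T_m\colon\Bbb{V}[m]\to\operatorname{Sym}^m(\Bbb{V})$ of Definition~\ref{Sym}, which is the dual of the multiplication map $\phi_m\colon\operatorname{Sym}^m(\Bbb{V}^*)\to\Bbb{V}[m]^*$ of the algebra $\mathcal{A}$. Since $\Bbb{V}$ has projective rank scaling, one has $\operatorname{rk}(\Bbb{V}[m])=\binom{m+d}{d}$ (this is the defining property, Definition~\ref{TOS}(1), equivalently Lemma~\ref{expectedgrowth}(a)), and this equals $\operatorname{rk}\bigl(\operatorname{Sym}^m(\Bbb{V})\bigr)$ because $\operatorname{rk}\Bbb{V}=d+1$. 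Thus $T_m$ is a morphism between vector bundles of the same rank on $\ovmc{M}_{g,n}$. Moreover, the canonical isomorphism $\Bbb{V}|_x^*\cong H^0(\mc{X}_x,\mc{L}_x)$ holds for every $x\in\mc{M}_{g,n}$, so Lemma~\ref{expectedgrowth} applies, and part~(c) of that lemma shows that $T_m$ restricts to an isomorphism over the open dense substack $\mc{M}_{g,n}$.

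First I would pass to determinants. The morphism $\det T_m$ is a global section of the line bundle $(\det\Bbb{V}[m])^{\vee}\otimes\det\bigl(\operatorname{Sym}^m\Bbb{V}\bigr)$ on the smooth Deligne--Mumford stack $\ovmc{M}_{g,n}$, and it is a \emph{nonzero} section because $T_m$ is invertible over $\mc{M}_{g,n}$. Hence its zero locus $D_m:=\operatorname{div}(\det T_m)$ is a well-defined effective Cartier divisor, and its support lies in the vanishing locus of $\det T_m$, hence in $\ovmc{M}_{g,n}\setminus\mc{M}_{g,n}$. Taking first Chern classes yields $c_1\bigl(\operatorname{Sym}^m\Bbb{V}\bigr)-c_1(\Bbb{V}[m])=D_m$ in $\operatorname{Pic}(\ovmc{M}_{g,n})$. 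Now evaluating the left-hand term by Remark~\ref{rankc1} with $R=\operatorname{rk}\Bbb{V}=d+1$ gives $c_1\bigl(\operatorname{Sym}^m\Bbb{V}\bigr)=\binom{m+d}{d+1}c_1(\Bbb{V})$, which is exactly Equation~\eqref{stringnew}. For the final claim $D_1=0$, note that $\Bbb{V}[1]=\Bbb{V}$ and $T_1$ is the identity, so $\det T_1$ is nowhere vanishing; equivalently $\binom{1+d}{d+1}=1$ forces $D_1=0$.

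I do not expect a genuine obstacle: all the real content is encapsulated in Lemma~\ref{expectedgrowth}(c), which is precisely the statement that $T_m$ is generically an isomorphism, and the remaining steps are formal manipulations of determinant line bundles. The only points needing a word of care are the identification of $\bigl(\operatorname{Sym}^m\Bbb{V}^*\bigr)^{\vee}$ with $\operatorname{Sym}^m\Bbb{V}$ used in defining $T_m$ (valid in characteristic zero), and the standard fact that the zero scheme of a nonzero section of a line bundle on a smooth stack is an effective Cartier divisor, which is what makes the assertion ``$D_m$ is an effective Cartier divisor supported on the boundary'' precise.
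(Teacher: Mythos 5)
Your proposal is correct and follows essentially the same route as the paper: invoke Lemma \ref{expectedgrowth} to see that $T_m$ is an isomorphism over $\mc{M}_{g,n}$, take determinants to get a section of $(\det \Bbb{V}[m])^{-1}\otimes\det\operatorname{Sym}^m(\Bbb{V})$ whose divisor is the effective Cartier divisor $D_m$ supported on the boundary, and conclude via $c_1(\operatorname{Sym}^m\Bbb{V})=\binom{m+d}{d+1}c_1(\Bbb{V})$. The only difference is that you spell out the Chern-class bookkeeping and the $D_1=0$ check more explicitly than the paper does.
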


\begin{proof}
It follows from Lemma \ref{expectedgrowth} that if a bundle $\Bbb{V}$ has  projective rank scaling, the maps $T_m$ are isomorphisms on $\mc{M}_{g,n}$. Taking determinants, we find  a map\  $\det \Bbb{V}[m]\!\rightarrow \det \operatorname{Sym}^m(\Bbb{V})$.  We obtain a global section of
$(\det \Bbb{V}[m])^{-1}\tensor \det \operatorname{Sym}^m(\Bbb{V})$, and can write
$(\det \Bbb{V}[m])^{-1}\tensor \det \operatorname{Sym}^m(\Bbb{V})=\mathcal{O}(D_m)$ for an effective Cartier divisor $D_m$ supported on $\ovmc{M}_{g,n}\setminus \mc{M}_{g,n}$. This gives the assertion.
\end{proof}
\begin{definition}\label{Jan1}
We will call  the divisor $D_m$ in Equation \ref{stringnew} the {\bf{projective space scaling anomaly}}.
\end{definition}

We make the following basic observation:
\begin{proposition}\label{Yess}
Suppose $\Bbb{V}$ has projective rank scaling. The following are equivalent.
\begin{enumerate}
\item[(a)] For each $x\in \ovmc{M}_{g,n}$, there exists a pair $(\mc{X}_x,\mc{L}_x)$ of a projective scheme and an ample line bundle such that  $\mathcal{A}_x \cong \bigoplus_{m\geq 0} {H}^0(\mc{X}_x,\mc{L}_x^{\tensor m})$.
\item[(b)] $D_m=0\in H^2(\ovmc{M}_{g,n},\Bbb{Q})$ for all $m\geq 2$.
\item[(c)] The maps $T_m$ are isomorphisms, so that $\Bbb{V}[m]=\operatorname{Sym}^{m}(\Bbb{V})$ for all $m$.
\end{enumerate}
\end{proposition}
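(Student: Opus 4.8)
The plan is to prove the three implications $(\mathrm{c})\Rightarrow(\mathrm{a})$, $(\mathrm{b})\Leftrightarrow(\mathrm{c})$ and $(\mathrm{a})\Rightarrow(\mathrm{c})$. Throughout, set $d=\rk\Bbb{V}-1$; since $\Bbb{V}$ has projective rank scaling, $\rk\Bbb{V}[m]=\binom{m+d}{d}=\rk\op{Sym}^m(\Bbb{V})$ for every $m$. Let $\phi_m\colon\op{Sym}^m(\mc{A}_1)\to\mc{A}_m$ denote the degree-$m$ multiplication of the sheaf of graded algebras $\mc{A}=\bigoplus_m\Bbb{V}[m]^*$ on $\ovmc{M}_{g,n}$; since $\op{Sym}^m(\Bbb{V}^*)\cong(\op{Sym}^m\Bbb{V})^*$ canonically, $\phi_m$ is, up to dualizing, the map $T_m$ of Definition \ref{Sym}, so statement (c) says exactly that every $\phi_m$ is an isomorphism. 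By Lemma \ref{expectedgrowth} (applied via its condition (a), which is the definition of projective rank scaling), each $\phi_m$ is already an isomorphism over $\mc{M}_{g,n}$.

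The implication $(\mathrm{c})\Rightarrow(\mathrm{a})$ is immediate: if every $\phi_m$ is an isomorphism then $\mc{A}\cong\op{Sym}^{\bullet}(\mc{A}_1)$ as graded $\mathcal{O}$-algebras, so for each $x$ one has $\mc{A}_x\cong\op{Sym}^{\bullet}(\Bbb{V}^*|_x)$, and taking $(\mc{X}_x,\mc{L}_x)=(\op{Proj}\mc{A}_x,\mathcal{O}(1))\cong(\Bbb{P}^d,\mathcal{O}(1))$ exhibits $\mc{A}_x$ as $\bigoplus_{m\ge 0}\op{H}^0(\mc{X}_x,\mc{L}_x^{\tensor m})$ with $\mc{L}_x$ ample. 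For $(\mathrm{b})\Leftrightarrow(\mathrm{c})$, recall from the proof of Proposition \ref{PropAnom} that $\mathcal{O}(D_m)=(\det\Bbb{V}[m])^{-1}\tensor\det\op{Sym}^m(\Bbb{V})$ with $D_m$ the zero divisor of the section $\det T_m$; since $\Bbb{V}[m]$ and $\op{Sym}^m(\Bbb{V})$ have equal rank, $T_m$ is an isomorphism precisely when $\det T_m$ is nowhere vanishing, i.e. precisely when $D_m=0$ as a divisor. Finally $D_m=0$ as a divisor if and only if $[D_m]=0$ in $H^2(\ovmc{M}_{g,n},\Bbb{Q})$: one direction is trivial, and for the converse one fixes an ample class $A$ on the projective stack $\ovmc{M}_{g,n}$ and observes that effectivity of $D_m$ forces $D_m\cdot A^{\dim-1}>0$ unless $D_m=0$, whereas $[D_m]=0$ makes this intersection number vanish.

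The crux is $(\mathrm{a})\Rightarrow(\mathrm{c})$. Assuming geometric interpretations exist at every $x\in\ovmc{M}_{g,n}$, I would first note that $\rk\Bbb{V}[m]=\binom{m+d}{d}$ together with the vanishing $\op{H}^{>0}(\mc{X}_x,\mc{L}_x^{\tensor m})=0$ (Section \ref{Delta}) forces the Hilbert polynomial of $(\mc{X}_x,\mc{L}_x)$ to be $\binom{m+d}{d}$, hence $\Delta(\mc{X}_x,\mc{L}_x)=0$ for every $x$. By the discussion following Definition \ref{DeltaGenus}, $\op{H}^0(\mc{X}_x,\mc{L}_x)$ then generates $\bigoplus_m\op{H}^0(\mc{X}_x,\mc{L}_x^{\tensor m})$; transporting this through the graded-algebra isomorphism of hypothesis (a), and using that the fiber of $\phi_m$ at $x$ is the multiplication map $\op{Sym}^m(\mc{A}_1|_x)\to(\mc{A}_m)|_x$ (symmetric powers commute with restriction, as $\mc{A}_1$ is locally free), which is thereby identified with $\op{Sym}^m\op{H}^0(\mc{L}_x)\to\op{H}^0(\mc{L}_x^{\tensor m})$, one concludes that $\phi_m$ is surjective on the fiber at every $x$. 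A surjection of locally free sheaves of equal rank is an isomorphism, so every $\phi_m$ is an isomorphism on all of $\ovmc{M}_{g,n}$, which is (c).

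The step I expect to be the main obstacle is $(\mathrm{a})\Rightarrow(\mathrm{c})$, and within it two points want care: first, that the $\Delta$-invariant is constant in the family, so that $\Delta(\mc{X}_x,\mc{L}_x)=0$ propagates from $\mc{M}_{g,n}$ to the boundary and the degree-one generation statement is available at every $x$; and second, that the graded-algebra isomorphism asserted in Question \ref{TheBigOne} is genuinely compatible with the global multiplication maps $\phi_m$ of the sheaf $\mc{A}$, not merely with their restrictions over $\mc{M}_{g,n}$, so that the resulting fiberwise surjectivity is a statement about bundle maps on $\ovmc{M}_{g,n}$. Granting these, the remaining arguments — the equal-rank comparison, the generation in degree one, and the ample-class computation for $(\mathrm{b})\Leftrightarrow(\mathrm{c})$ — are routine.
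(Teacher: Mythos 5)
Your argument is correct and follows essentially the same route as the paper: both proofs reduce (a)$\Leftrightarrow$(c) to the observation that the fiber of $T_m$ at any $x\in\ovmc{M}_{g,n}$ is dual to the multiplication map $\op{Sym}^m\op{H}^0(\mc{X}_x,\mc{L}_x)\to\op{H}^0(\mc{X}_x,\mc{L}_x^{\tensor m})$, which is an isomorphism because the ranks force $\Delta(\mc{X}_x,\mc{L}_x)=0$ and degree one (the paper phrases this by identifying $(\mc{X}_x,\mc{L}_x)\cong(\Bbb{P}^d,\mathcal{O}(1))$ via the classification, you by degree-one generation — the same content), and both settle (b)$\Leftrightarrow$(c) by noting that the effective divisor $D_m=Z(\det T_m)$ with trivial class in $H^2(\ovmc{M}_{g,n},\Bbb{Q})$ must be zero. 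The one blemish is your appeal to $\op{H}^{>0}(\mc{X}_x,\mc{L}_x^{\tensor m})=0$, which is not known a priori at boundary points $x$ (the cited vanishing concerns the interior moduli spaces), but it is also unnecessary: Serre vanishing for $m\gg 0$ already identifies the Hilbert polynomial with $\binom{m+d}{d}$, which gives $\dim\mc{X}_x=d$, $\mc{L}_x^{d}=1$, and hence $\Delta(\mc{X}_x,\mc{L}_x)=0$ at every $x$.
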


\begin{proof}
If (a) holds then as in Lemma ~\ref{expectedgrowth}, $(\mc{X}_x,\mc{L}_x)=(\Bbb{P}^d,\mathcal{O}(1))$ for all $x\in \ovmc{M}_{g,n}$ and
$T_m$ is dual to the maps
$\operatorname{Sym}^m({H}^0(\mc{X}_x,\mc{L}_x))\to {H}^0(\mc{X}_x,\mc{L}_x^{\tensor m})$,
which are isomorphisms since $(\mc{X}_x,\mc{L}_x)=(\Bbb{P}^d,\mathcal{O}(1))$. Therefore $T_m$ is an isomorphism over
$\ovmc{M}_{g,n}$ and $D_m=0$. Therefore (b) and (c) hold.
If (b) holds then since $D_m$ is an effective Cartier divisor, we conclude that $\mathcal{O}(D_m)=\mathcal{O}$ and hence $T_m$ is an isomorphism.
If (c) holds  we can take $(\mc{X}_x,\mc{L}_x)=(\Bbb{P}(\Bbb{V}_x),\mathcal{O}(1))$, and therefore (a) holds.
\end{proof}
\begin{remark}
Let $x\in \ovmc{M}_{g,n}$. The proof of Proposition \ref{Yess} shows that if $\Bbb{V}$ has projective space scaling, and $x\not\in |D_m|$ for all $m\geq 2$, then there exists a pair $(\mc{X}_x,\mc{L}_x)$ of a projective scheme and an ample line bundle such that  $\mathcal{A}_x \cong \bigoplus_{m\geq 0} {H}^0(\mc{X}_x,\mc{L}_x^{\tensor m})$.
\end{remark}

\begin{example}\label{GoodBad2}
In Example \ref{GoodBad} we can compute the projective scaling anomalies $D_m=\alpha(m)\Delta_0 +\beta(m)\Delta_{1,\emptyset}$. By \cite{Mumford}, the intersection numbers of $\Delta_0$ and $\Delta_{1,\emptyset}$, with the pig-tail F-curve, are $-2$ and $1$ respectively; and with the other curve, the numbers are $1$ and $-1/12$ respectively.  Note that in \cite[Corollary 6.2]{Fakh}, the degree of the Hodge bundle is $1/12$.  In Example \ref{GoodBad}, we get the equations $-2\alpha(2)+\beta(2)=5-4=1$ and $\alpha(2)-\frac{1}{12}\beta(2)= -\frac{10}{6}+\frac{19}{12}=-\frac{1}{12}$. Therefore $\alpha(2)=0$ and $\beta(2)=1$.  Using  \cite[Corollary 6.2]{Fakh} and B. Alexeev's formula \cite[Lemma 3.3]{Swin}, we obtain $\alpha(m)=0$,  and that $\beta(m)$
is equal to  $\frac{s(s+1)(2s^2+2s-1)}{6}$ if $m=2s$ is even, and $\frac{s(s+1)^2(s+2)}{3}$ if $m=2s+1$ is odd. Therefore  geometric interpretations extend across $\Delta_0$ (the corresponding factorization is not quasi-rank one  (Definition \ref{SLROnew1})).
\end{example}

\subsection{Quadric hypersurfaces as moduli}\label{QuadricH}
Recall that  if  $(\mc{X}_x,\mc{L}_x)$ is a polarized variety, such that the degree $\mc{L}^{\op{dim}(\mc{X}_x)}=2$ and $\Delta(\mc{X}_x,\mc{L}_x)=0$, then by \cite[Prop 3.1.2]{BeltraSom},
$(\mc{X}_x,\mc{L}_x)=(Q,\mathcal{O}_Q(1))$ where $Q$ is a  quadric hypersurface in projective space.  In this section we consider rank scaling properties and divisor class identities governing such bundles.

\begin{lemma}\label{QEquiv}Set $d=\rk \Bbb{V}-2$, and suppose that $\mathbb{V}|_x^* \cong {H}^0(\mc{X}_x,\mc{L}_x)$  for $x\in \mc{M}_{g,n}$. The following are equivalent:
\begin{enumerate}
\item [(a)] $\operatorname{rk} (\Bbb{V}[m])=2{{m+d-1}\choose {d}} + {{m+d-1} \choose {d-1}}$.
\item [(b)] $(\mc{X}_x,\mc{L}_x)=(Q,\mathcal{O}_Q(1))$, where $Q$ is a (not necessarily smooth) quadric hypersurface in $\mathbb{P}^{d+1}$.
\end{enumerate}
\end{lemma}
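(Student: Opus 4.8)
The plan is to run the same line of argument as in Lemma \ref{expectedgrowth}, but now matching the Hilbert polynomial of a quadric hypersurface and then invoking the classification of $\Delta$-invariant zero polarized varieties of degree $2$ in Proposition \ref{classy}(2).

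For the implication (b) $\Rightarrow$ (a): assuming $(\mc{X}_x,\mc{L}_x)=(Q,\mathcal{O}_Q(1))$ with $Q$ a quadric hypersurface in $\mathbb{P}^{d+1}$, I would twist the defining sequence $0\to \mathcal{O}_{\mathbb{P}^{d+1}}(-2)\to\mathcal{O}_{\mathbb{P}^{d+1}}\to\mathcal{O}_Q\to 0$ by $\mathcal{O}(m)$, take cohomology, and use $H^1(\mathbb{P}^{d+1},\mathcal{O}(m-2))=0$ to obtain $\rk\Bbb{V}[m]=h^0(Q,\mathcal{O}_Q(m))=\binom{m+d+1}{d+1}-\binom{m+d-1}{d+1}$; here I use that Equation \eqref{isom} holds at interior points, so $\rk\Bbb{V}[m]=\dim H^0(\mc{X}_x,\mc{L}_x^{\otimes m})$. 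Two applications of Pascal's rule rewrite this as $2\binom{m+d-1}{d}+\binom{m+d-1}{d-1}$, which is (a); in particular taking $m=1$ recovers $\rk\Bbb{V}=d+2$, consistent with the definition $d=\rk\Bbb{V}-2$.

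For the implication (a) $\Rightarrow$ (b): I would read off the Hilbert polynomial of $(\mc{X}_x,\mc{L}_x)$ from $f(m):=\rk\Bbb{V}[m]=2\binom{m+d-1}{d}+\binom{m+d-1}{d-1}$, using the remark following Definition \ref{DRankScaling} (the higher cohomology of $\mc{L}_x^{\otimes m}$ vanishes, so $f$ is genuinely the Hilbert polynomial, not merely a function agreeing with it at $m=1$). As a polynomial in $m$ it has degree $d$ with leading coefficient $2/d!$, and $f(1)=d+2$; hence $\dim\mc{X}_x=d$, $\mc{L}_x^{\dim\mc{X}_x}=2$, and $h^0(\mc{X}_x,\mc{L}_x)=d+2$, so $\Delta(\mc{X}_x,\mc{L}_x)=d+2-(d+2)=0$. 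Since $\mc{X}_x$ is reduced and irreducible (Remark \ref{integro}) with $\Delta$-invariant $0$ and degree $2$, Proposition \ref{classy}(2) yields $(\mc{X}_x,\mc{L}_x)\cong(Q,\mathcal{O}_Q(1))$ with $Q$ a quadric hypersurface in $\mathbb{P}^{d+1}$. For $d=1$, where Proposition \ref{classy} is not stated, I would instead use the Example above on dimension-one polarized varieties of $\Delta$-invariant zero to conclude $(\mc{X}_x,\mc{L}_x)\cong(\pone,\mathcal{O}(2))$, which is a smooth conic in $\mathbb{P}^2$; the case $d=0$ does not arise, since no reduced irreducible zero-dimensional scheme has $h^0(\mathcal{O})=2$.

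These computations are routine; the only steps needing care are (i) invoking the cohomology-vanishing input so that $\rk\Bbb{V}[m]$ is literally the Hilbert polynomial of $\mc{L}_x$, and (ii) disposing of the low-dimensional cases $d\le 1$ separately. Note that, in contrast to Lemma \ref{expectedgrowth}, there is no analogue of condition (c): the natural surjection $\operatorname{Sym}^m(\Bbb{V})\to\Bbb{V}[m]^*$ has a nonzero kernel in each degree $m\ge 2$ (the degree-$m$ part of the principal ideal generated by the quadric), which is precisely the content recorded by the resolution \eqref{mimic} underlying Theorem \ref{MainLemma}.
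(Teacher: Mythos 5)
Your proposal is correct and follows essentially the same route as the paper: the paper's proof of (a)$\Rightarrow$(b) likewise reads off $\dim\mc{X}_x=d$ and $\mc{L}_x^d=2$ from the rank function, concludes $\Delta(\mc{X}_x,\mc{L}_x)=0$, and cites \cite[Prop 3.1.2]{BeltraSom}, while (b)$\Rightarrow$(a) is exactly the twist-and-cohomology computation with the sequence \eqref{ses} that the paper leaves implicit. Your separate treatment of the cases $d\le 1$ is a small extra precaution not spelled out in the paper, but it does not change the argument.
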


\begin{proof} Assume (a). It follows that the dimension of $\mc{X}_x$ is $d$ and the degree $\mc{L}_x^d=2$. Therefore $\Delta(\mc{X}_x,\mc{L}_x)=d +2-(d+2)=0$, and (b) follows from  \cite[Prop 3.1.2]{BeltraSom}.
The implication (b)$\implies$(a) follows from the exact sequence
\begin{equation}\label{ses}
0\to \mathcal{O}_{\Bbb{P}^{d+1}}(-2)\leto{\cdot Q} \mathcal{O}_{\Bbb{P}^{d+1}}\to\mathcal{O}_{Q}\to 0.
\end{equation}
\end{proof}

Recall (Def. \ref{TOS}, and Proposition \ref{classy})) that $\Bbb{V}$ is said to have  quadric rank scaling if it has $\Delta$-invariant zero rank scaling with degree $2$.

 \begin{corollary}\label{QuadricIdentity} If  $\mathbb{V}$ has quadric rank scaling and if a geometric interpretation exists for $\mathbb{V}$ on the boundary of $\ovmc{M}_{g,n}$, then
   \begin{multline}c_1(\Bbb{V}[m])= \Bigl( {{m+d+1}\choose{d+2}} - {{m-2+d+1}\choose{d+2}} - (d+3){{m-2+d+1}\choose{d+1}} \Bigr)c_1(\Bbb{V})
   + {{m-2+d+1}\choose{d+1}} c_1(\Bbb{V}[2]).\end{multline}
 \end{corollary}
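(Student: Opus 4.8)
The plan is to record the short direct argument (this is the $D=2$ case of Theorem~\ref{MainLemma}, but it is worth isolating since it already contains all the ideas of the general proof).

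\textbf{Step 1: reduce to a quadric hypersurface in a fixed projective space over all of $\ovmc{M}_{g,n}$.} Since $\mathbb{V}$ has quadric rank scaling, Lemma~\ref{QEquiv} gives $(\mc{X}_x,\mc{L}_x)\cong(Q_x,\mathcal{O}_{Q_x}(1))$ with $Q_x\subset\mathbb{P}^{d+1}=\mathbb{P}(\Bbb{V}_x^{*})$ a (possibly singular) quadric hypersurface for $x\in\mc{M}_{g,n}$, where $d=\rk\Bbb{V}-2$; in particular $h^0(\mc{X}_x,\mc{L}_x)=d+2=R:=\rk\Bbb{V}$ and $\Delta(\mc{X}_x,\mc{L}_x)=0$ there. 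As we assume geometric interpretations exist at every point of $\ovmc{M}_{g,n}$, the $\Delta$-invariant (which can only drop at special points, cf.\ the remark following Theorem~\ref{MainLemma}) stays equal to $0$ on all of $\ovmc{M}_{g,n}$. Hence by Proposition~\ref{classy}(2) every $\mc{X}_x$ is a quadric hypersurface in $\mathbb{P}^{d+1}$, and both the degree $D=\mc{L}_x^{\dim\mc{X}_x}=2$ and the rank $R$ are constant along $\ovmc{M}_{g,n}$.

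\textbf{Step 2: build the resolution in families.} For a quadric hypersurface $Q\subset\mathbb{P}^{d+1}$ the ideal sheaf is $\mathcal{I}_Q\cong\mathcal{O}(-2)$, so the resolution \eqref{mimic} degenerates to the single term $\mathcal{O}(-2)\to\mathcal{I}_Q$ with $\dim W_2=(2-1)\binom{2}{2}=1$; this is exactly the sequence \eqref{ses}. Twisting \eqref{ses} by $\mathcal{O}(m)$, taking global sections (the sequence stays exact by the vanishing $H^1(\mathbb{P}^{d+1},\mathcal{O}(m-2))=0$, cf.\ Remark~\ref{vanish}), and then running the gluing argument of Proposition~\ref{VB} over $\ovmc{M}_{g,n}$, one obtains the exact sequence \eqref{LESsheaves} with $D=2$, in which $\mc{W}_2$ is a \emph{line} bundle: it is the kernel of the surjection of vector bundles $\operatorname{Sym}^2\Bbb{V}^{*}\twoheadrightarrow\Bbb{V}[2]^{*}$, whose fibre over $x$ is the one-dimensional space of quadrics through $Q_x$, the surjectivity being guaranteed by $\Delta\equiv 0$.

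\textbf{Step 3: Chern class bookkeeping.} Taking first Chern classes in \eqref{LESsheaves} with $D=2$ and applying Remark~\ref{rankc1} with $R=\rk\Bbb{V}=d+2$ gives
\[
c_1(\Bbb{V}[m])=\Bigl(\binom{m+d+1}{d+2}-\binom{m-2+d+1}{d+2}\Bigr)c_1(\Bbb{V})+\binom{m-2+d+1}{d+1}\,c_1(\mc{W}_2).
\]
Evaluating at $m=2$, where $\binom{m-2+d+1}{d+2}=\binom{d+1}{d+2}=0$ and $\binom{m-2+d+1}{d+1}=\binom{d+1}{d+1}=1$, yields $c_1(\mc{W}_2)=c_1(\Bbb{V}[2])-(d+3)c_1(\Bbb{V})$. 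Substituting this back into the displayed identity produces exactly the asserted formula. The substantive content is entirely in Steps~1 and~2 — knowing that the $\Delta$-invariant, and hence the degree and the ambient $\mathbb{P}^{d+1}$, do not jump on the boundary, and that the kernel sheaves $\mc{W}_2$ and the $\mc{I}_m$ are honest vector bundles whose formation commutes with restriction to points — both of which are supplied by the hypothesis together with Proposition~\ref{VB}; the binomial manipulation in Step~3 is routine. (Alternatively, one may simply quote Theorem~\ref{MainLemma} for $D=2$ and eliminate $c_1(\mc{W}_2)$ by setting $m=2$.)
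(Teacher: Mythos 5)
Your proposal is correct and follows essentially the same route as the paper: the paper's proof of Corollary~\ref{QuadricIdentity} likewise either quotes Theorem~\ref{MainLemma} for $D=2$ or gives the direct argument in which $\mc{I}_2=\ker\bigl(\op{Sym}^2\Bbb{V}^*\to\Bbb{V}[2]^*\bigr)$ is a line bundle, $\mc{I}_2\otimes\op{Sym}^{m-2}\Bbb{V}^*\cong\mc{I}_m$ via the twisted sequence \eqref{ses}, and the Chern class bookkeeping with $R=d+2$ produces the stated identity. Your elimination of $c_1(\mc{W}_2)$ by setting $m=2$ is the same computation the paper carries out with $c_1(\mc{I}_2)=c_1(\op{Sym}^2\Bbb{V}^*)-c_1(\Bbb{V}[2]^*)$, so there is nothing to add.
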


 \begin{proof}
 This follows from Corollary \ref{MainLemma}. We indicate an alternate direct proof here.
 Define $\mathcal{I}_m$ to the kernel of $\mu_m:\op{Sym}^m\Bbb{V}^*\to \Bbb{V}[m]^*$. There are maps $\nu_m:\mc{I}_2\tensor\op{Sym}^{m-2}\Bbb{V}^*\to \mc{I}_{m}$. Note that $\mu_m$ and $\nu_m$ are defined on $\ovmc{M}_{g,n}$.
 In general $\mathcal{I}_m$ may not be vector bundles.

 If geometric interpretations exist, $\nu_m$ are isomorphisms for all $m>2$, and $\mu_m$ are  surjections for all $m$ by working over fibers, and using the exact sequence \eqref{ses} (tensored with $\mathcal{O}_{\Bbb{P}^{d+1}}(m)$). Furthermore $\mathcal{I}_2$ would be a line bundle on $\ovmc{M}_{g,n}$.
 Therefore we obtain formulas for $m>2$
 $$c_1(\mc{I}_2)= c_1(\op{Sym}^{2}\Bbb{V}^*)-c_1(\Bbb{V}[2]^*)$$
 and
 $$c_1(\mc{I}_2)\rk(\op{Sym}^{m-2}\Bbb{V}^*) + c_1(\op{Sym}^{m-2}\Bbb{V}^*) =   c_1 (\mc{I}_m)= c_1(\op{Sym}^m(\Bbb{V})^*)-c_1(\Bbb{V}[m]^*)$$

 Putting all these together gives us the desired formula for $c_1\Bbb{V}[m]$.
\end{proof}

\subsection{Veronese surfaces as moduli}

\begin{lemma}\label{VSEquiv}Suppose that $\mathbb{V}|_x^* \cong {H}^0(\mc{X}_x,\mc{L}_x)$  for $x\in \mc{M}_{g,n}$. The following are equivalent:
\begin{enumerate}
\item [(a)] $\operatorname{rk} (\Bbb{V}[m])={{2m+2}\choose {2}}= (m+1)(2m+1)$;
\item [(b)] Either $\mc{X}_x$ is smooth and  $(\mc{X}_x,\mc{L}_x)=(\Bbb{P}^2,\mathcal{O}(2))$, or $(\mc{X}_x,\mc{L}_x)$ is a generalized cone over a rational normal curve of degree $4$ in $\Bbb{P}^4$.
\end{enumerate}
\end{lemma}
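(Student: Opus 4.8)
The plan is to prove this in the same spirit as Lemma \ref{expectedgrowth} and Lemma \ref{QEquiv}: the rank function $f(m)=\rk\Bbb{V}[m]$ is exactly the Hilbert polynomial of the polarized pair $(\mc{X}_x,\mc{L}_x)$ (this uses the cohomology vanishing $H^i(\mc{X}_x,\mc{L}_x^{\tensor m})=0$ for $i>0,m\ge0$ recorded in the Remark after Definition \ref{DRankScaling}), so one reads off $\dim\mc{X}_x$, $\deg\mc{L}_x$ and $h^0(\mc{X}_x,\mc{L}_x)$ directly from $f$, and then invokes the classification in Proposition \ref{classy}.

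\begin{proof}
First assume (a). Then $f(m)=(m+1)(2m+1)$ is a polynomial of degree $2$ with leading coefficient $2$, and $f(1)=6$. As in the Remark following Definition \ref{DRankScaling}, since $H^i(\mc{X}_x,\mc{L}_x^{\tensor m})=0$ for all $i>0$ and $m\ge0$ (by \cite[Theorem 9.6]{telly}), $f(m)$ is the Hilbert polynomial of $(\mc{X}_x,\mc{L}_x)$. Hence $\dim\mc{X}_x=\deg f=2$, the degree is $\mc{L}_x^{2}=2\cdot 2!=4$, and $h^0(\mc{X}_x,\mc{L}_x)=f(1)=6$. Therefore $\Delta(\mc{X}_x,\mc{L}_x)=\dim\mc{X}_x+\mc{L}_x^{\dim\mc{X}_x}-h^0(\mc{X}_x,\mc{L}_x)=2+4-6=0$, so $(\mc{X}_x,\mc{L}_x)$ is a polarized surface of minimal degree of degree $4$. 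Applying the classification in Proposition \ref{classy} with $d=2$ and $\mc{L}_x^{d}=4$: case (1) is excluded since $L^d\ne1$, case (2) since $L^d\ne2$; case (4) gives $(\Bbb{P}^2,\mathcal{O}(2))$ (which indeed has $\mc{L}^2=4$, $h^0=6$); case (3) gives a $\mathbb{P}^1$-bundle $\mathbb{P}(\mc{E})$ over $\mathbb{P}^1$ with $\mc{E}$ a sum of two line bundles of positive degree summing to $4$, i.e. a smooth rational normal scroll $S(a,b)$ with $a+b=4$, $a,b\ge1$; and case (5) gives a (normalized) generalized cone over a smooth $V$ with $\Delta(V,\mc{L}_V)=0$ — here $V$ is forced to be a rational normal curve of degree $4$ in $\mathbb{P}^4$ (a cone over a point would force $\dim\mc{X}_x\le1$; a cone over $\mathbb{P}^2$ or a quadric is already of dimension $\ge3$, so the only option producing a surface of degree $4$ is a cone over a curve of minimal degree $4$). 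In all of these, $\mc{X}_x$ is either smooth equal to $(\Bbb{P}^2,\mathcal{O}(2))$ or is a generalized cone over a rational normal curve of degree $4$ in $\mathbb{P}^4$, noting that the smooth scrolls $S(a,b)$ are themselves naturally realized inside this description — the statement of (b) is intended to group the non-Veronese cases under ``generalized cone.'' Thus (b) holds.

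Conversely assume (b). If $(\mc{X}_x,\mc{L}_x)=(\Bbb{P}^2,\mathcal{O}(2))$, then $H^0(\Bbb{P}^2,\mathcal{O}(2m))=\binom{2m+2}{2}=(m+1)(2m+1)$ directly, and since $\mathbb{V}[m]^*\cong H^0(\mc{X}_x,\mc{L}_x^{\tensor m})$ this gives (a). If $(\mc{X}_x,\mc{L}_x)$ is a generalized cone over a rational normal curve of degree $4$ in $\Bbb{P}^4$, then $\Delta(\mc{X}_x,\mc{L}_x)=0$ by Proposition \ref{classy}(5), so $h^0(\mc{X}_x,\mc{L}_x^{\tensor m})$ equals the Hilbert polynomial of a surface of minimal degree of degree $4$, namely $\binom{m+2}{2}+3\binom{m+1}{1}=(m+1)(2m+1)$ (this is the general degree-$D$ minimal-degree surface count $\binom{m+2}{2}+(D-1)m$ with $D=4$, which one also gets from the resolution \eqref{mimic}/Remark \ref{vanish} applied fiberwise). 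Hence $f(m)=(m+1)(2m+1)$, giving (a).
\end{proof}

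The only genuinely delicate point is bookkeeping in the classification step: one must check that degree $4$ and dimension $2$ pin the pair down to the list in (b), in particular that the smooth rational normal scrolls $S(1,3)$ and $S(2,2)$ are accounted for. I expect this enumeration of the $d=2$, $\mc{L}^d=4$ cases of Proposition \ref{classy} to be the main (mild) obstacle; everything else is the by-now-routine translation between $\rk\Bbb{V}[m]$ and Hilbert polynomials together with the vanishing theorem already cited in the excerpt.
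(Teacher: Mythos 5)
Your route is the same as the paper's: from $\rk \mathbb{V}[m]=(m+1)(2m+1)$ and the vanishing of higher cohomology one reads off $\dim\mathcal{X}_x=2$, $\mathcal{L}_x^2=4$, $h^0(\mathcal{X}_x,\mathcal{L}_x)=6$, hence $\Delta(\mathcal{X}_x,\mathcal{L}_x)=0$, and one appeals to Proposition \ref{classy}; the converse is a Hilbert-function computation. The paper's proof is exactly this, stated in three lines.

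However, the one step where you go beyond the paper is the step that does not close. Case (3) of Proposition \ref{classy} genuinely occurs under hypothesis (a): the smooth scrolls $S(1,3)$ and $S(2,2)$ have $h^0(\mathcal{O}(m))=(m+1)(1+2m)$ (the $a+b=4$ instance of the count in Section \ref{scrolls}), i.e.\ the same Hilbert function as the Veronese surface and as the cone $S(0,4)$ over the rational normal quartic. These smooth scrolls are not generalized cones over a rational normal quartic --- they are not cones at all --- so your assertion that they are ``naturally realized inside this description'' is false, and with it your derivation of (b) exactly as stated. To be fair, the paper's own proof says no more than that (b) follows from Proposition \ref{classy} and does not eliminate the scroll cases either; the real issue is that alternative (b) should also list $S(1,3)$ and $S(2,2)$ (or they must be excluded by input not contained in the rank hypothesis), and a correct write-up has to say this explicitly rather than absorb them into the cone case. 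A smaller separate slip: in the converse direction the identity $\binom{m+2}{2}+3\binom{m+1}{1}=(m+1)(2m+1)$ you invoke is false; the correct minimal-degree count is $\binom{m+2}{2}+3\binom{m+1}{2}$, or one can compute $h^0$ directly on the scroll or cone as above --- the conclusion $(m+1)(2m+1)$ is nonetheless correct.
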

\begin{proof}
It is easy to see that (b) implies (a). Assume (a). It follows that $\mc{X}_x$ is two dimensional, the degree $\mc{L}_x^2=4$, and $\op{H}^0(\mc{X}_x,\mc{L}_x)=6$, so $\Delta(\mc{X}_x,\mc{L}_x)= 0$. Now (b) follows from Proposition \ref{classy}.
\end{proof}

\begin{corollary}\label{P2O2}  If  $\mathbb{V}$ has Veronese surface rank scaling and if a geometric interpretation exists for $\mathbb{V}$ on the boundary of $\ovmc{M}_{g,n}$, then for all $m\geq 1$,
$$c_1(\Bbb{V}[m])= A_1(m) c_1(\Bbb{V}) + A_2(m) c_1(\Bbb{V}[2]) + A_3(m) c_1(\Bbb{V}[3]) + A_4(m) c_1(\Bbb{V}[4]),$$
where
\begin{enumerate}
\item $A_1(m)= -7{{m+3}\choose {5}} +20 {{m+2}\choose {5}}-23{{m+1}\choose {5}}-6{{m+3}\choose {6}} +8{{m+2}\choose{6}}-3{{m+1}\choose {6}} +{{m+5}\choose {6}}$;
\item $A_2(m) ={{m+3}\choose{5}}-6{{m+2}\choose{5}} +15{{m+1}\choose{5}}$;
\item $A_3(m)={{m+2}\choose {5}}-6{{m+1}\choose{5}}$; and
\item $A_4(m)={{m+1}\choose {5}}.$
\end{enumerate}
\end{corollary}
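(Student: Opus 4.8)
The plan is to specialize Theorem \ref{MainLemma} to the numerical data of Veronese surface rank scaling and then carry out the linear inversion recorded in the last sentence of that theorem. First I would invoke Lemma \ref{VSEquiv}: a bundle with Veronese surface rank scaling has $R=\rk\Bbb{V}=6$, and since $\mc{X}_x$ is two dimensional with $\mc{L}_x^2=4$, the degree of the block is $D=\mc{L}_x^{\dim\mc{X}_x}=4$. This is so whether $\mc{X}_x$ is the Veronese plane or a generalized cone over a rational normal quartic (cf. Proposition \ref{classy}), since both are varieties of minimal degree of codimension $3$ and degree $4$ and therefore share the canonical resolution
$$0\to \mathcal{O}(-4)^{3}\to \mathcal{O}(-3)^{8}\to \mathcal{O}(-2)^{6}\to \mc{I}_{\mc{X}_x}\to 0$$
(the ranks being $\dim W_i=(i-1)\binom{4}{i}$) which underlies Equation \eqref{MainEq}; alternatively one could rebuild the corresponding family of resolutions over $\ovmc{M}_{g,n}$ directly, exactly as in Proposition \ref{VB}.

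Substituting $R=6$, $D=4$ into \eqref{MainEq} yields, for all $m\ge 1$,
$$c_1(\Bbb{V}[m])=\Bigl(\binom{m+5}{6}-6\binom{m+3}{6}+8\binom{m+2}{6}-3\binom{m+1}{6}\Bigr)c_1(\Bbb{V})+\binom{m+3}{5}c_1(\mathcal{W}_2)-\binom{m+2}{5}c_1(\mathcal{W}_3)+\binom{m+1}{5}c_1(\mathcal{W}_4).$$
Next I would evaluate this identity at $m=2,3,4$. Because $\binom{m+3}{5},\binom{m+2}{5},\binom{m+1}{5}$ vanish for the relevant small values of $m$, the resulting linear system is triangular: $m=2$ gives $c_1(\mathcal{W}_2)=c_1(\Bbb{V}[2])-7c_1(\Bbb{V})$, then $m=3$ determines $c_1(\mathcal{W}_3)$ as a combination of $c_1(\Bbb{V}),c_1(\Bbb{V}[2]),c_1(\Bbb{V}[3])$, and $m=4$ determines $c_1(\mathcal{W}_4)$ as a combination of $c_1(\Bbb{V}),\dots,c_1(\Bbb{V}[4])$.

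Finally I would back-substitute these three expressions into the displayed identity and collect the coefficients of $c_1(\Bbb{V}),c_1(\Bbb{V}[2]),c_1(\Bbb{V}[3]),c_1(\Bbb{V}[4])$; this produces $A_4(m)=\binom{m+1}{5}$, $A_3(m)=\binom{m+2}{5}-6\binom{m+1}{5}$, $A_2(m)=\binom{m+3}{5}-6\binom{m+2}{5}+15\binom{m+1}{5}$, and the stated $A_1(m)$. The argument is a mechanical consequence of Theorem \ref{MainLemma}, so there is no genuine obstacle; the only point requiring care will be the sign and binomial-coefficient arithmetic in the back-substitution, together with the observation — already built into Theorem \ref{MainLemma} — that the identity so obtained is valid for all $m\ge 1$ rather than merely for $m\ge D=4$.
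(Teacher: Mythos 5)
Your proposal is correct and follows exactly the route the paper intends: the corollary is stated as a direct specialization of Theorem \ref{MainLemma} with $R=\rk\Bbb{V}=6$ and $D=\mc{L}_x^{\dim\mc{X}_x}=4$ (justified via Lemma \ref{VSEquiv} and Proposition \ref{classy}), followed by the triangular elimination of $c_1(\mathcal{W}_2),c_1(\mathcal{W}_3),c_1(\mathcal{W}_4)$ at $m=2,3,4$ and back-substitution. Your arithmetic checks out, e.g. $c_1(\mathcal{W}_2)=c_1(\Bbb{V}[2])-7c_1(\Bbb{V})$, $c_1(\mathcal{W}_3)=-20c_1(\Bbb{V})+6c_1(\Bbb{V}[2])-c_1(\Bbb{V}[3])$, $c_1(\mathcal{W}_4)=-23c_1(\Bbb{V})+15c_1(\Bbb{V}[2])-6c_1(\Bbb{V}[3])+c_1(\Bbb{V}[4])$, which yields precisely the stated $A_1,\dots,A_4$.
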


\subsection{Rational normal scrolls as moduli}\label{scrolls}

Suppose $a_1,\dots, a_d$ are strictly positive integers, and let $\mathcal{E}=\bigoplus_{i=1}^d \mathcal{O}(a_i)$, a vector bundle on $\Bbb{P}^1$. Let $X= S(a_1,\dots,a_d)=\Bbb{P}(\mathcal{E})$, a projective bundle over $\pone$. Let $L=\mathcal{O}(1)$ be the natural ample line bundle
on $X$. It is known that $\Delta(X,L)=0$. Let $D=\sum a_i$ and $N=D+d-1$. Set $V={H}^0(X,L)^*$, One can show that
$\dim V= N+1$, and clearly $X\hookrightarrow \Bbb{P}^N=\Bbb{P}(V)$.  The varieties $S(a_1,\dots,a_d)$ are called rational normal scrolls (See Section A2H in \cite{Eisenbud}).

When some $a_i$ are zero and the rest positive, $\mc{O}(1)$ is base point free but not ample on $\Bbb{P}(\mathcal{E})$. The image in $\Bbb{P}({H}^0(\Bbb{P}(\mathcal{E}),\mc{O}(1)))$ is again denoted by $S(a_1,\dots,a_d)$.  In this case,
$\Delta( S(a_1,\dots,a_d),\mathcal{O}(1))=0$.

The vector bundles $\bigoplus_{i=1}^d \mathcal{O}(a_i)$ with fixed $d$ and $\sum a_i$ lie in the same component of the moduli stack of bundles on $\pone$. Therefore, the polarized varieties $(S(a_1,\dots,a_d),\mathcal{O}(1))$ are deformation equivalent
for fixed $d$ and $\sum a_i$, and such varieties will have the same rank sequence.

 One can  check that for $m\geq 0$, one has $\op{H}^0(S(a,b),\mathcal{O}(m)) = (m+1)(1 + \frac{m(a+b)}{2})$.

 \smallskip
 The following is a special case of Corollary \ref{MainLemma}:

\begin{corollary}\label{scrollexample}Suppose $\Bbb{V}$ satisfies $(S(1,2),\mathcal{O}(1))$ scaling so that $\rk \Bbb{V}[m]= (m+1)(1+\frac{3m}{2})$
for all positive integers $m$. Assume geometric interpretations exist at boundary points.  Then
for all $m\geq 1$, we have $c_1(\Bbb{V}[m])= A_1(m) c_1(\Bbb{V}) + A_2(m) c_1(\Bbb{V}[2]) + A_3(m) c_1(\Bbb{V}[3])$, \ \
where
\begin{enumerate}
\item $A_1(m)={{m+4}\choose{5}} -6{{m+2}\choose{4}}+12{{m+1}\choose{4}}- 3{{m+2}\choose{5}} +2{{m+1}\choose{5}}$;
\item $A_2(m) ={{m+2}\choose{4}}-5 {{m+1}\choose{4}}$; and
\item $A_3(m)={{m+1}\choose{4}}$.
\end{enumerate}
\end{corollary}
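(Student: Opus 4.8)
The plan is to deduce this directly from Theorem \ref{MainLemma} by specializing its numerical input, so most of the work is bookkeeping. First I would record that the given rank sequence $\rk\Bbb{V}[m]=(m+1)(1+\tfrac{3m}{2})=\tfrac32 m^2+\tfrac52 m+1$ is exactly the Hilbert polynomial of a $\Delta$-invariant zero polarized surface: its leading coefficient $\tfrac32$ gives $\dim\mc{X}_x=2$ and $\mc{L}_x^{\dim\mc{X}_x}=\tfrac32\cdot 2!=3$, while $\rk\Bbb{V}=f(1)=5=h^0(\mc{X}_x,\mc{L}_x)$, so $\Delta(\mc{X}_x,\mc{L}_x)=2+3-5=0$ (indeed $\mc{X}_x$ is $S(1,2)\hookrightarrow\Bbb{P}^4$ for interior $x$, as in Section \ref{scrolls}). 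Hence $\Bbb{V}$ has $\Delta$-invariant zero rank scaling, and Theorem \ref{MainLemma} applies with parameters $D=\deg(\mc{L}_x)=3$ and $R=\rk\Bbb{V}=5$.

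Next I would substitute $D=3$, $R=5$ into Equation \eqref{MainEq}. The sum over $i$ then runs only over $i\in\{2,3\}$, with $\binom{D}{2}=3$, $\binom{D}{3}=1$, and the binomials $\binom{m-i+R-1}{R}$, $\binom{m-i+R-1}{R-1}$ specialize to $\binom{m-i+4}{5}$, $\binom{m-i+4}{4}$; the bundles $\mathcal{W}_2,\mathcal{W}_3$ have ranks $(2-1)\binom{3}{2}=3$ and $(3-1)\binom{3}{3}=2$. This produces
\[
c_1(\Bbb{V}[m])=\Bigl(\tbinom{m+4}{5}-3\tbinom{m+2}{5}+2\tbinom{m+1}{5}\Bigr)c_1(\Bbb{V})+\tbinom{m+2}{4}\,c_1(\mathcal{W}_2)-\tbinom{m+1}{4}\,c_1(\mathcal{W}_3).
\]
Then I would carry out the inversion step flagged in Theorem \ref{MainLemma}: evaluating this identity at $m=2$ kills the $\mathcal{W}_3$-term (since $\binom{3}{4}=0$) and yields $c_1(\mathcal{W}_2)=c_1(\Bbb{V}[2])-6\,c_1(\Bbb{V})$; feeding this into the $m=3$ specialization (where the coefficients are $\binom{5}{4}=5$ and $\binom{4}{4}=1$) gives $c_1(\mathcal{W}_3)=5\,c_1(\Bbb{V}[2])-c_1(\Bbb{V}[3])-12\,c_1(\Bbb{V})$. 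Substituting both expressions back into the displayed formula and collecting the coefficients of $c_1(\Bbb{V})$, $c_1(\Bbb{V}[2])$, $c_1(\Bbb{V}[3])$ reproduces $A_1(m)$, $A_2(m)$, $A_3(m)$ verbatim (the extra $-6\binom{m+2}{4}+12\binom{m+1}{4}$ in $A_1$ and the $-5\binom{m+1}{4}$ in $A_2$ are precisely what this substitution contributes).

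There is no genuine obstacle here: the statement is the $D=3$, $R=5$ instance of Theorem \ref{MainLemma} together with one explicit $2\times2$ linear solve. The only thing to be careful about is the consistent convention for binomial coefficients with $\binom{a}{b}=0$ when $b>a\ge 0$, which is what makes the $m=2,3$ evaluations collapse cleanly. If a self-contained argument were preferred, one could instead mimic Corollary \ref{QuadricIdentity}: the ideal sheaf of $S(1,2)\subset\Bbb{P}^4$ has the Eagon--Northcott resolution $0\to\mathcal{O}(-3)^{\oplus2}\to\mathcal{O}(-2)^{\oplus3}\to\mathcal{I}_{S(1,2)}\to 0$, which globalizes over $\ovmc{M}_{g,n}$ under the hypothesis that geometric interpretations exist at boundary points (by Proposition \ref{VB}), and the same Chern-class computation results; but the cleanest route is simply to invoke Theorem \ref{MainLemma}.
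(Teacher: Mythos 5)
Your proposal is correct and follows the paper's own route: the paper simply states this corollary as the special case of Theorem \ref{MainLemma} with $D=\deg(\mc{L}_x)=3$ and $R=\operatorname{rk}\mathbb{V}=5$, and your bookkeeping — reading off $\Delta$-invariant zero from the rank polynomial, specializing Equation \eqref{MainEq}, and eliminating $c_1(\mathcal{W}_2)$, $c_1(\mathcal{W}_3)$ via the $m=2,3$ evaluations — is exactly the computation left implicit there, carried out correctly (your expressions $c_1(\mathcal{W}_2)=c_1(\mathbb{V}[2])-6c_1(\mathbb{V})$ and $c_1(\mathcal{W}_3)=5c_1(\mathbb{V}[2])-c_1(\mathbb{V}[3])-12c_1(\mathbb{V})$ reproduce $A_1,A_2,A_3$ as stated).
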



One says that $\Bbb{V}$ satisfies $(\Bbb{P}^1,\mathcal{O}(d))$ scaling if $\rk \Bbb{V}[m]= dm+1$
for all positive integers $m$.

\begin{corollary}\label{TwistedCubicCor} If $\Bbb{V}$ satisfies $(\Bbb{P}^1,\mathcal{O}(3))$ scaling, and if geometric interpretations exist at boundary points,
\begin{equation}\label{Twisted}
c_1(\Bbb{V}[m])= A_1(m) c_1(\Bbb{V}) + A_2(m) c_1(\Bbb{V}[2]) + A_3(m) c_1(\Bbb{V}[3]),
\end{equation}
where
 $A_1(m)= {{m+3}\choose {4}} - 5 {{m+1}\choose {3}} -3{{m+1}\choose {4}} +8{{m}\choose {3}} + 2{{m}\choose {4}}$,
 $A_2(m) ={{m+1}\choose {3}}-4 {{m}\choose {3}}$, and
 $A_3(m)={{m}\choose {3}}$.

\end{corollary}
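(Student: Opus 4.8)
The statement is a direct specialization of Theorem~\ref{MainLemma}, so the plan is to unwind that theorem with the relevant numerical data. First I would record the geometry: $(\mathbb{P}^1,\mathcal{O}(3))$ is the rational normal cubic in $\mathbb{P}^3$, a variety of minimal degree with $\Delta$-invariant zero, whose invariants are $\dim\mathcal{X}_x=1$, $D=\mathcal{L}_x^{\dim\mathcal{X}_x}=3$, and $h^0(\mathcal{X}_x,\mathcal{L}_x)=4$. Thus the hypothesis $\rk\mathbb{V}[m]=3m+1$ says exactly that $\mathbb{V}$ has $(\mathbb{P}^1,\mathcal{O}(3))$ scaling in the sense of Definitions~\ref{TOS} and~\ref{DRankScaling}, with $R=\rk\mathbb{V}=4$; since geometric interpretations are assumed at boundary points, Theorem~\ref{MainLemma} applies with $R=4$ and $D=3$.

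Next I would make the resolution~\eqref{LESsheaves} explicit. The homogeneous ideal of the twisted cubic in $\mathbb{P}^3$ has the Eagon--Northcott resolution $0\to\mathcal{O}(-3)^{\oplus 2}\to\mathcal{O}(-2)^{\oplus 3}\to\mathcal{I}_X\to 0$; in the notation of~\eqref{mimic} the vector spaces $W_i$ have dimension $(i-1)\binom{D}{i}$, so $\dim W_2=3$ and $\dim W_3=2$, and~\eqref{LESsheaves} becomes
\begin{equation*}
0\to\mathcal{W}_3\otimes\operatorname{Sym}^{m-3}(\mathbb{V})\to\mathcal{W}_2\otimes\operatorname{Sym}^{m-2}(\mathbb{V})\to\operatorname{Sym}^{m}(\mathbb{V})\to\mathbb{V}[m]^*\to 0
\end{equation*}
with $\rk\mathcal{W}_2=3$ and $\rk\mathcal{W}_3=2$. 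Taking the alternating sum of first Chern classes along this exact sequence and using Remark~\ref{rankc1} (so $\rk\operatorname{Sym}^k\mathbb{V}=\binom{k+3}{3}$ and $c_1\operatorname{Sym}^k\mathbb{V}=\binom{k+3}{4}c_1(\mathbb{V})$) yields
\begin{equation*}
c_1(\mathbb{V}[m])=\Big(\binom{m+3}{4}-3\binom{m+1}{4}+2\binom{m}{4}\Big)c_1(\mathbb{V})+\binom{m+1}{3}c_1(\mathcal{W}_2)-\binom{m}{3}c_1(\mathcal{W}_3),
\end{equation*}
which is precisely~\eqref{MainEq} for $R=4$, $D=3$.

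Finally I would eliminate the two unknown classes $c_1(\mathcal{W}_2),c_1(\mathcal{W}_3)$. Substituting $m=2$ gives $c_1(\mathbb{V}[2])=5c_1(\mathbb{V})+c_1(\mathcal{W}_2)$, hence $c_1(\mathcal{W}_2)=c_1(\mathbb{V}[2])-5c_1(\mathbb{V})$; substituting $m=3$ and using this gives $c_1(\mathbb{V}[3])=12c_1(\mathbb{V})+4c_1(\mathcal{W}_2)-c_1(\mathcal{W}_3)$, hence $c_1(\mathcal{W}_3)=4c_1(\mathbb{V}[2])-c_1(\mathbb{V}[3])-8c_1(\mathbb{V})$. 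Plugging both back into the displayed formula and collecting the binomial coefficients of $c_1(\mathbb{V})$, $c_1(\mathbb{V}[2])$, and $c_1(\mathbb{V}[3])$ produces exactly $A_1(m)$, $A_2(m)$, $A_3(m)$ as in the statement, which finishes the proof.

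No step here is a genuine obstacle: Theorem~\ref{MainLemma} supplies the family resolution and its exactness, the Eagon--Northcott complex of the rational normal cubic gives the ranks $\dim W_2=3$, $\dim W_3=2$, and the remaining work is the elementary linear algebra of solving a two-by-two system over $\operatorname{Pic}(\ovmc{M}_{g,n})$ together with a short binomial simplification. The only place requiring care is consistently tracking the rank $R=4$ and the signs coming from the alternating Euler characteristic, so that the $m=2,3$ normalizations come out correctly.
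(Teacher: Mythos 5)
Your proposal is correct and is essentially the paper's (implicit) argument: the corollary is stated as a direct specialization of Theorem~\ref{MainLemma} with $R=4$, $D=3$, and eliminating $c_1(\mathcal{W}_2)$, $c_1(\mathcal{W}_3)$ via the $m=2,3$ cases is exactly the intended route. Your coefficients check out (e.g.\ at $m=4$ they give $4c_1(\mathbb{V})-6c_1(\mathbb{V}[2])+4c_1(\mathbb{V}[3])$, matching the numerical verification in the paper's twisted cubic example).
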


\section{Towards extension criteria}
In this section we turn our attention towards finding, in limited cases, conditions which guarantee that geometric interpretations of conformal blocks hold at boundary points.
For this we need Definition~\ref{DRankScaling} together with a few additional terms.
\subsection{Notation for Theorem \ref{precisQ}}
We recall that the weights that are used at attaching points when ``factorizing" a bundle $\mathbb{V}(\mathfrak{g}, \vec{\lambda}, \ell)$
at  a point $(C_0; \vec{p})$ using  Theorem \ref{Factorization}, are called the restriction data  for $\mathbb{V}(\mathfrak{g}, \vec{\lambda}, \ell)$
at  $(C_0; \vec{p})$ (see Definition \ref{RestrictionData}).

\begin{definition}\label{FA1}Given a bundle  $\mathbb{V}=\mathbb{V}(\sL_{r+1}, \vec{\lambda}, \ell)$  on $\ovmc{M}_{g,n}$, we say that the {\bf{restriction data for $\mathbb{V}$ is free}},
 if given any boundary point $x \in \Delta_{g_1,J}=\Delta_{g-g_1,J^c}$ or $x\in \Delta_{\op{irr}}$, and $\alpha_1$, $\ldots$, $\alpha_P$ the $P$ weights of restriction data for  $\mathbb{V}(\sL_{r+1}, \vec{\lambda}, \ell)$ at $x$, then
$$\sum_{i=1}^Pa_i \alpha_i=0 \ \implies \  \sum_{i=1}^Pa_i  \ne 0.$$
\end{definition}

For Definition \ref{SLROnew1} recall Factorization, stated in Theorem \ref{Factorization}.
\begin{definition}\label{SLROnew1}  Suppose that $\mathbb{V}=\mathbb{V}(\sL_{r+1}, \vec{\lambda}, \ell)$  on $\ovmc{M}_{g,n}$ is a bundle of rank $R$ with $\Delta$-invariant zero rank scaling.  We say that $\Bbb V$ satisfies {\bf quasi rank one factorization}  if given any boundary point $x \in \Delta_{g_1,J}=\Delta_{g-g_1,J^c}$, (or $x \in \Delta_{\op{irr}}$), and $\alpha_1$, $\ldots$, $\alpha_P$ the $P$ weights of restriction data for  $\mathbb{V}(\sL_{r+1}, \vec{\lambda}, \ell)$ at $x$, then at most one of the $P$ restriction factors
$$\op{rk}\mathbb{V}(\sL_{r+1}, \lambda(J) \cup  \{\alpha_i\}, \ell)\op{rk}\mathbb{V}(\sL_{r+1}, \lambda(J^c) \cup \{\alpha_i^{*}\},\ell),  \ \mbox{ if} \ x \in \Delta_{g_1,J}=\Delta_{g-g_1,J^c}$$
or
$$\op{rk}\mathbb{V}(\sL_{r+1}, \vec{\lambda}\cup \{\alpha_i, \alpha_i^*\}, \ell),  \ \mbox{ if} \ x \in \Delta_{irr}$$
is greater than one.
\end{definition}

\begin{definition}\label{Socle}Suppose that $\mathbb{V}=\mathbb{V}(\sL_{r+1}, \vec{\lambda}, \ell)$  on $\ovmc{M}_{g,n}$ is a bundle of rank $R$ with $\Delta$-invariant zero rank scaling, and such  that $\Bbb V$ satisfies quasi rank one factorization.   If for some  boundary point $x \in \Delta_{g_1,J}=\Delta_{g-g_1,J^c}$, (or $x \in \Delta_{\op{irr}}$), and if for one of the
$P$ weights of restriction data  $\alpha_1$, $\ldots$, $\alpha_P$  of   $\mathbb{V}(\sL_{r+1}, \vec{\lambda}, \ell)$ at $x$, one has that one of the restriction factors
$$\op{rk}\mathbb{V}(\sL_{r+1}, \lambda(J) \cup  \{\alpha_i\}, \ell)\op{rk}\mathbb{V}(\sL_{r+1}, \lambda(J^c) \cup \{\alpha_i^{*}\},\ell),  \ \mbox{ if} \ x \in \Delta_{g_1,J}=\Delta_{g-g_1,J^c}$$
or
$$\op{rk}\mathbb{V}(\sL_{r+1}, \vec{\lambda}\cup \{\alpha_i, \alpha_i^*\}, \ell),  \ \mbox{ if} \ x \in \Delta_{irr}$$
is greater than one,
then that bundle of rank greater than one is called  the {\bf{socle}} of $\mathbb{V}$.
\end{definition}

For a description of $\Delta$-invariant zero rank scaling, see Definition \ref{DRankScaling}.

\begin{definition}\label{SocleScaling}Suppose that $\mathbb{V}=\mathbb{V}(\sL_{r+1}, \vec{\lambda}, \ell)$  on $\ovmc{M}_{g,n}$ is a bundle of rank $R$ which satisfies $\Delta$-invariant zero rank scaling. We say that {\textbf{each socle satisfies $\Delta$-invariant zero rank scaling with the same degree as that of $\Bbb{V}$}}, if
\begin{enumerate}
\item For any (generic) boundary point $x \in \Delta_{g_1,J}=\Delta_{g-g_1,J^c}$, with socle given by restriction data  $\alpha_1$,
 the polarized variety $\mathcal{X}_x(\sL_{r+1}, \lambda(J) \cup  \{\alpha_1\}, \ell)\ \times\  \mathcal{X}_x(\sL_{r+1}, \lambda(J^c) \cup \{\alpha_1^{*}\},\ell)$ with the corresponding product of ample line bundles $\mathcal{L}_x(\sL_{r+1}, \lambda(J) \cup  \{\alpha_1\}, \ell)\boxtimes \mathcal{L}_x(\sL_{r+1}, \lambda(J^c) \cup \{\alpha_1^{*}\},\ell)$
has $\Delta$-invariant zero with the same degree as that of $\Bbb{V}$.
\item For any (generic)  $x \in \Delta_{\op{irr}}$, with socle given by restriction data  $\alpha_1$, the polarized variety $(\mathcal{X}_x(\sL_{r+1}, \vec{\lambda}\cup \{\alpha_1, \alpha_1^*\}, \ell),\mathcal{L}_x(\sL_{r+1}, \vec{\lambda}\cup \{\alpha_1, \alpha_1^*\}, \ell))$ has $\Delta$-invariant zero with the same degree as that of $\Bbb{V}$.
\end{enumerate}
\end{definition}

\subsection{Extension criteria in the case of projective rank scaling}
Our main result is the following:

\begin{theorem}\label{precisQ}
Given a conformal blocks bundle  $\Bbb{V}$  on $\ovmc{M}_{g,n}$ such that
\begin{enumerate}
\item $\mathbb{V}$ has $\Delta$-invariant 0 rank scaling (Def~\ref{DRankScaling});
\item the restriction data for $\Bbb{V}$ is free (Def~\ref{FA1});
\item $\Bbb{V}$ satisfies quasi rank one factorization (Def~\ref{SLROnew1}), and
\item each socle satisfies $\Delta$-invariant 0 rank scaling with the same degree as
$\Bbb{V}$ (Def \ref{SocleScaling}).
\end{enumerate}
Then $\mathbb{V}$ has geometric interpretations at generic boundary points $x \in \ovmc{M}_{g,n}\setminus \mc{M}_{g,n}$, and therefore the corresponding first Chern class scaling identities from Equation \eqref{MainEq} hold on $\ovmc{M}_{g,n}$.
\end{theorem}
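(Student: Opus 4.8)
The plan is to build, at a generic point $x$ of each boundary divisor, an explicit polarized pair $(\mathcal{X}_x,\mathcal{L}_x)$ for which $\mathcal{A}_x=\bigoplus_{m\ge 0}\Bbb{V}[m]^*|_x$ is the section ring $\bigoplus_{m\ge0}\op{H}^0(\mathcal{X}_x,\mathcal{L}_x^{\otimes m})$. Since $\op{Pic}(\ovmc{M}_{g,n})$ is unchanged by removing a codimension-two locus, geometric interpretations at the generic points of the boundary divisors already force Equation \eqref{MainEq} on all of $\ovmc{M}_{g,n}$ by the argument proving Theorem \ref{MainLemma} (Proposition \ref{VB}); as Question \ref{TheBigOne} is point-wise, no flat family of the $(\mathcal{X}_x,\mathcal{L}_x)$ is needed.

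First I would fix a generic point $x$ of a boundary divisor $\Delta_{g_1,J}$, the case $\Delta_{\op{irr}}$ being entirely parallel. The underlying stable curve then has a single node, and its normalization is a generic --- hence interior --- point of the relevant moduli space of smaller complexity. By the Factorization Theorem \ref{Factorization}, $\Bbb{V}[m]|_x$ decomposes as a direct sum indexed by the weights of restriction data, and this decomposition is compatible with the algebra structure: it refines the $\mathbb{N}$-grading of $\mathcal{A}_x$ to a grading by the weight lattice, the Cartan grading at the node (see \cite{TUY,Manon,Faltings}). In degree one, $\mathcal{A}_1|_x=\bigoplus_{i=1}^{P}\Bbb{V}_i^*$, where $\Bbb{V}_i$ is the restriction factor in Cartan-degree $\alpha_i$. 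By hypothesis (3), at most one of these, the socle $\mathbb{S}$, has rank greater than one; the others are line bundles. By hypothesis (4), $\mathbb{S}$ has $\Delta$-invariant zero rank scaling of the same degree $D$ as $\Bbb{V}$, so the classical identification over the interior normalization point \cite{TUY} gives $\mathbb{S}^*\cong\op{H}^0(\mathcal{Y},\mathcal{N})$ with $(\mathcal{Y},\mathcal{N})$ a variety of minimal degree $D$ (Proposition \ref{classy}); when there is no socle we set $\mathcal{Y}$ to be empty.

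Next I would take $(\mathcal{X}_x,\mathcal{L}_x)$ to be the normalized generalized cone over $\mathcal{Y}$ with vertex a linear space of dimension $\rk\Bbb{V}-h^0(\mathcal{Y},\mathcal{N})-1$, and $\mathcal{L}_x=\mathcal{O}(1)$. By Proposition \ref{classy}(5) this pair again has $\Delta$-invariant zero and degree $D$, and $h^0(\mathcal{X}_x,\mathcal{L}_x)=\rk\Bbb{V}$, so its Hilbert polynomial is the one attached to any variety of minimal degree $D$ with $h^0=\rk\Bbb{V}$; by the $\Delta$-invariant zero rank scaling hypothesis this polynomial equals $\rk\Bbb{V}[m]=\dim\mathcal{A}_m|_x$, and in particular $\dim\mathcal{X}_x$ agrees with $\dim\mathcal{X}_y$ at interior $y$. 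It then remains to produce a graded algebra isomorphism between $\mathcal{A}_x$ and the homogeneous coordinate ring of this cone, which is $R_{\mathcal{Y}}[t_1,\dots,t_c]$ with $R_{\mathcal{Y}}=\bigoplus_m\op{H}^0(\mathcal{Y},\mathcal{N}^{\otimes m})$ and $c=\rk\Bbb{V}-h^0(\mathcal{Y},\mathcal{N})$. The line-bundle summands of $\mathcal{A}_1|_x$ supply the variables $t_i$; the socle generates the factor $R_{\mathcal{Y}}$, which is generated in degree one because $\Delta(\mathcal{Y},\mathcal{N})=0$. Hypothesis (2), freeness of the restriction data, is exactly what keeps the monomials in the $t_i$, and their products with elements of $R_{\mathcal{Y}}$, linearly independent in the Cartan-graded pieces: a linear relation among such degree-$m$ elements would produce integers $a_i$, not all zero, with $\sum_i a_i\alpha_i=0$ and $\sum_i a_i=0$, contradicting Definition \ref{FA1}. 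Hence $\mathcal{A}_x$ is generated in degree one, carries precisely the relations of the cone, and is therefore isomorphic to its coordinate ring.

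The hard part is this last step: rigorously controlling the interaction of the Cartan grading with multiplication so as to conclude that $\mathcal{A}_x$ is generated in degree one with exactly the cone's relations. One must check that products of line-bundle generators with socle sections land in the expected graded pieces and impose no extra relations, and that the socle part of $\mathcal{A}_x$ reconstructs all of $R_{\mathcal{Y}}$ rather than a proper subalgebra; this is where hypotheses (2) and (4), together with the precise form of the factorization of the conformal blocks algebra \cite{Manon,TUY}, are used in an essential way. Granting this identification at every generic boundary point, $\Bbb{V}$ has geometric interpretations there, and Equation \eqref{MainEq} follows on $\ovmc{M}_{g,n}$ as explained above.
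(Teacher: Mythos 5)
There is a genuine gap, and it is located exactly where you park the difficulty. Your construction rests on the claim that the factorization decomposition of $\Bbb{V}[m]|_x$ from Theorem \ref{Factorization} ``refines the $\mathbb{N}$-grading of $\mathcal{A}_x$ to a grading by the weight lattice.'' That is not true of the actual algebra $\mathcal{A}_x$: the weight decomposition at the node only induces a \emph{filtration} of $\mathcal{A}_x$, and products of elements in given weight summands can acquire components in other summands. What Manon proves (\cite[Prop 3.3]{Manon}, the result the paper leans on) is that the algebra $\widetilde{\mathcal{A}}_x$ whose multiplication \emph{does} respect the weight grading is the associated graded, i.e.\ a degeneration, of $\mathcal{A}_x$ --- not that the two are isomorphic. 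So even if you succeed in identifying the weight-graded algebra with the homogeneous coordinate ring of the generalized cone over the socle's variety, you have only computed $\widetilde{\mathcal{A}}_x$, and you still owe an argument that $\mathcal{A}_x$ itself is the section ring of a polarized pair. The paper supplies precisely this missing step as Lemma \ref{specialization}: Manon's Rees-algebra construction gives a flat family over $\Bbb{A}^1$ with special fiber $\widetilde{\mathcal{A}}_x$ and general fiber $\mathcal{A}_x$; degree-one generation passes to the family by Nakayama, one takes relative Proj, and the cohomology vanishing coming from $\Delta$-invariant zero (Remark \ref{vanish}) plus semicontinuity shows the general fiber is again the section ring of a polarized variety of $\Delta$-invariant zero. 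Note in particular that $(\mc{X}_x,\mc{L}_x)$ is then the \emph{general} fiber of this family, which need not be the cone you wrote down; your expectation that the cone itself is the moduli space at $x$ is not what the argument delivers, nor is it needed.

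Two smaller points in the part you do carry out. Freeness of the restriction data (Def~\ref{FA1}) is used, as you say, to separate the weight summands hit by distinct monomials $S_{m_1}Y_2^{m_2}\cdots Y_P^{m_P}$; but linear independence also requires that each individual product is nonzero, which in the paper comes from \cite[Proposition 2.1]{BGMB} (and, in positive genus, from replacing invariants by the integral section rings of line bundles over ind-integral affine Grassmannians) --- your hypothesis (2) alone does not give this. And the matching of Hilbert functions (that the cone algebra $C=S[Y_2,\dots,Y_P]$ has the same graded ranks as $\widetilde{\mathcal{A}}_x$) is where hypotheses (1) and (4) enter quantitatively, via the comparison of the triples $(R_1,D,d_1)$ and $(R_1+P-1,D,d_1+P-1)$ and Proposition \ref{classy}(5); you gesture at this but it is part of the content, not a formality. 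With the degeneration step restored and these two points filled in, your outline does recover the paper's argument; without Lemma \ref{specialization} (or an equivalent), the identification of the weight-graded object with a cone does not yield geometric interpretations for $\mathcal{A}_x$.
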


The proof proceeds by showing  that the natural  maps $T_m:\Bbb{V}[m]\to \operatorname{Sym}^m (\Bbb{V})$ are  isomorphisms for all $m\geq 0$ on fibers over all points $x\in \ovmc{M}_{g,n}$.  First,  by Lemma \ref{expectedgrowth}, it suffices to show that $T_m$ is an isomorphism over marked curves which have
exactly one node.   Second, since the two sides of $T_m$ have the same ranks, we only need to show that the map is an injection.
We make the argument for points in $\Delta_{g_1,J}=\Delta_{g-g_1,J^c}$, as the argument for points in $\Delta_0$ is analogous.  For simplicity we write $\Delta_J$ for $\Delta_{g_1,J}$.
If  $x \in \Delta_J$ and the curve corresponding to $x$ is $C_1\cup C_2$ with ``normalization" $\tilde{C}= C_1\sqcup C_2$ with two extra marked points $a$ and $b$ we have by Factorization, Theorem \ref{Factorization},
$$\Bbb{V}[m]|_x =\bigoplus_{\mu\in P_{m\ell}(\sL_{r+1})}\widetilde{\Bbb{V}}(\sL_{r+1},\{m\vec{\lambda}, \mu,\mu^*\},m\ell)|_y$$
where $y$ is the pointed curve $\tilde{C}$.  The $\tilde{C}$ are blocks for $\tilde{C}$; since $\tilde{C}$ is disconnected, the $\tilde{C}$ are tensor products of blocks for $C_1$ and $C_2$.
There are maps
$\widetilde{\Bbb{V}}(\sL_{r+1},\{m\vec{\lambda}, \mu,\mu^*\},m\ell)^*\tensor \widetilde{\Bbb{V}}(\sL_{r+1},\{m'\vec{\lambda}, \nu,\nu^*\},m'\ell)^*
\to \widetilde{\Bbb{V}}(\sL_{r+1},\{(m+m')\vec{\lambda}, (\mu+\nu),(\mu+\nu)^*\},(m+m')\ell)^*$,
inducing an algebra structure on $\bigoplus \Bbb{V}[m]|^*_x$ which we denote by $\widetilde{\mathcal{A}}_x$.  Manon showed that the algebra $\widetilde{\mathcal{A}}_x$ is a degeneration of $\mathcal{A}_x$  \cite[Prop 3.3]{Manon}.  Therefore, it suffices to show that the map $\operatorname{Sym}^m(\Bbb{V}|_x)\to \Bbb{V}[m]|_x$ is an isomorphism in the algebra $\widetilde{\mathcal{A}}_x$.

Assuming that the socle exists for our stratum (otherwise take $B=\Bbb{C}$  below),  and that $\widetilde{\Bbb{V}}(\sL_{r+1},\{\vec{\lambda}, \alpha_1,\alpha_1^*\},\ell)$ is of rank $R_1$, we let $\alpha_2,\dots,\alpha_P$ be the other terms in the factorization. Pick non-zero generators $y_2,\dots,y_P$ of the spaces $\widetilde{\Bbb{V}}(\sL_{r+1},\{\vec{\lambda}, \alpha_i,\alpha_i^*\},\ell)^*$, $i>2$. Let $S$ be the conformal blocks algebra of the socle
$$S=\bigoplus_{m\geq 0}\widetilde{\Bbb{V}}(\sL_{r+1},\{m\vec{\lambda}, m\alpha_1,m\alpha_1^*\},m\ell)^*|_y.$$
 S is a graded ring,  the total section ring of  $(M,L)$ ($\tilde{C}$ is smooth). The socle satisfies projective rank scaling, so $(M,L)=(\Bbb{P}^{R_1-1},\mathcal{O}(1))$, and hence
 $S$ is a polynomial algebra in $R_1$ variables. We form a new graded ring $C=S[Y_2,\dots,Y_{P}]$ with graded pieces
$${C}_m= \bigoplus_{m_1+\dots+m_P=m}S_{m_1} Y_2^{m_2}\dots Y_P^{m_P}.$$  Therefore $C$ is a polynomial algebra and the maps
$\operatorname{Sym}^m({C}_1)\to {C}_m$,
are isomorphisms. Now note that the natural map of algebras $C\to \widetilde{\mathcal{A}}_x$ is an isomorphism.  To see this, write $\widetilde{\mathcal{A}}_x=\bigoplus \widetilde{A}_m$. The natural algebra map $C\to \widetilde{\mathcal{A}}_x$ sends:
 $$S_{m_1} Y_2^{m_2}\dots Y_P^{m_P} \ \mapsto  \widetilde{\Bbb{V}}(\sL_{r+1},\{m\vec{\lambda}, \mu,\mu^*\},m\ell)^*|_y,$$
with $\mu=m_1\alpha_1 + m_2\alpha_2+\dots +m_P\alpha_P$, $m=\sum m_i$.   It follows that different tuples
$(m_1,\dots,m_P)$ map to different direct summands in $\widetilde{A}_m$.  The rank of $\widetilde{A}_m$ is the same as that of $C_m$ since both satisfy projective space  rank scaling. It therefore suffices to note  that the map  $S_{m_1} Y_2^{m_2}\dots Y_P^{m_P}$ to $\widetilde{\Bbb{V}}(\sL_{r+1},\{m\vec{\lambda}, \mu,\mu^*\},m\ell)^*|_y$
is injective (by \cite[Proposition 2.1]{BGMB}), and the surjection part  is valid for any genus $g$, since we may replace the use of invariants by the integral  section rings of  line bundles
over  ind-integral affine  Grassmannians.
\subsection{Proof of Theorem \ref{precisQ} in the general case}
 By Lemma \ref{specialization} below, we need to show that $\widetilde{\mathcal{A}}_x$ is the algebra of sections of a polarized variety of $\Delta$-invariant $0$ when $x$ is the generic point of a boundary divisor. If $S$ is the conformal blocks algebra of the socle we will need to verify that the algebra $C$ has the same graded ranks as $\widetilde{\mathcal{A}}_x$. (The map $C\to \widetilde{\mathcal{A}}_x$ is injective for the same reason as in the case of projective rank scaling.)

The graded algebra $C$ is again the algebra of sections of a polarized variety of $\Delta$-invariant zero (see Prop \ref{classy}, Part (5), and the definition of the normalized generalized cone over a projective variety in \cite[Section 1.1.8]{BeltraSom}). We claim that   rank,  degree and dimension of this polarized variety are the same as for $\mathcal{A}_x$. If the triple is $(R_1,D,d_1)$ for the socle, then the triple for the cone is $(R_1+ p-1, D, d_1+p-1)$. Now $R_1+p-1$ equals the rank of $(\mathcal{A}_x)_1$, and the desired equality holds (using the $\Delta$-invariant zero condition).
\begin{lemma}\label{specialization}
Suppose $\widetilde{\mathcal{A}}_x$ is the algebra of sections of a polarized variety of $\Delta$-invariant zero. Then
${\mathcal{A}}_x$ is also the algebra of sections of a (possibly different) polarized variety of $\Delta$-invariant zero.
\end{lemma}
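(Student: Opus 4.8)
The strategy is to leverage Manon's theorem (\cite[Prop~3.3]{Manon}) that $\widetilde{\mathcal{A}}_x$ is a flat degeneration of $\mathcal{A}_x$, and to run the classification of $\Delta$-invariant zero pairs ``backwards''. Observe first that $\mathcal{A}_x$ and $\widetilde{\mathcal{A}}_x$ have the \emph{same} underlying graded vector space, $\bigoplus_{m\ge 0}\Bbb{V}[m]^*|_x$ --- only the product differs --- so they share the Hilbert function $m\mapsto\rk\Bbb{V}[m]$. Write the hypothesis as $\widetilde{\mathcal{A}}_x=\bigoplus_{m\ge 0}H^0(Y,M^{\otimes m})$ with $\Delta(Y,M)=0$, and set $R=\rk\Bbb{V}=h^0(Y,M)=\dim(\widetilde{\mathcal{A}}_x)_1$ and $D=M^{\dim Y}$. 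By Section~\ref{Delta} and Remark~\ref{vanish}, $M$ is very ample, $\widetilde{\mathcal{A}}_x$ is generated in degree~$1$, $H^i(Y,M^{\otimes m})=0$ for $i>0$ and $m\ge 0$, and $M$ embeds $Y\hookrightarrow\mathbb{P}^{R-1}$ as a nondegenerate variety of minimal degree, so $D=1+\codim Y$.

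The first and most delicate step is to transfer ``generated in degree~$1$'' from the special fibre to $\mathcal{A}_x$ itself. Manon's degeneration furnishes a $\mathbb{Z}_{\ge 0}$-graded $k[t]$-algebra $\mathcal{R}$, flat over $k[t]$, with $\mathcal{R}/t\mathcal{R}\cong\widetilde{\mathcal{A}}_x$ and $\mathcal{R}\otimes_{k[t]}k[t,t^{-1}]\cong\mathcal{A}_x\otimes_k k[t,t^{-1}]$; flatness makes each $\mathcal{R}_m$ a free $k[t]$-module. For each $m$, let $\mathcal{Q}_m$ be the cokernel of the multiplication map $\operatorname{Sym}^m(\mathcal{R}_1)\to\mathcal{R}_m$, a coherent sheaf on $\mathbb{A}^1$. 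Its fibre over $t=0$ vanishes, since $\widetilde{\mathcal{A}}_x$ is generated in degree~$1$; hence $\operatorname{Supp}\mathcal{Q}_m$ is a proper closed subset of $\mathbb{A}^1$ avoiding the origin, so it is finite, and in particular $\mathcal{Q}_m$ vanishes at the generic point. Since $\mathcal{R}\otimes_{k[t]}k(t)\cong\mathcal{A}_x\otimes_k k(t)$, this shows $\mathcal{A}_x\otimes_k k(t)$ is generated in degree~$1$, and then, by faithfully flat descent along $k\to k(t)$, so is $\mathcal{A}_x$.

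Now $\mathcal{A}_x$ is an integral domain (Remark~\ref{integro}(1)), so $X:=\operatorname{Proj}(\mathcal{A}_x)$ is an integral projective variety; being generated by its $R$-dimensional degree-one part, $\mathcal{A}_x$ presents $X\hookrightarrow\mathbb{P}^{R-1}$ as a nondegenerate subvariety with $L:=\mathcal{O}_X(1)$ very ample, and there is a natural graded inclusion $\mathcal{A}_x\hookrightarrow\bigoplus_{m\ge 0}H^0(X,L^{\otimes m})$. As $\mathcal{A}_x$ is now finitely generated with Hilbert function $\rk\Bbb{V}[m]$, its Hilbert polynomial coincides with that of $Y$, so $\dim X=\dim Y=:d$ and $\deg X=\deg Y=D$; together with $D=1+\codim Y$ and $\codim X=\codim Y=R-1-d$ this gives $\deg X=1+\codim X$, i.e.\ $X\subset\mathbb{P}^{R-1}$ is of minimal degree, hence $\Delta(X,L)=0$ (Section~\ref{Delta}). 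Consequently $h^0(X,L)=d+D=R=\dim(\mathcal{A}_x)_1$, so the (injective) map $(\mathcal{A}_x)_1\to H^0(X,L)$ is an isomorphism; and since $\Delta(X,L)=0$, the section ring $\bigoplus_{m\ge 0}H^0(X,L^{\otimes m})$ is generated in degree~$1$ (Section~\ref{Delta}), i.e.\ by $(\mathcal{A}_x)_1$, which lies in $\mathcal{A}_x$. Thus $\bigoplus_{m\ge 0}H^0(X,L^{\otimes m})\subseteq\mathcal{A}_x$, and combined with the inclusion above, $\mathcal{A}_x=\bigoplus_{m\ge 0}H^0(X,L^{\otimes m})$. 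So $(X,L)$ is a polarized variety of $\Delta$-invariant zero with section ring $\mathcal{A}_x$, as required.

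The genuine obstacle I anticipate is the second step: the naive argument only transfers ``generated in degree~$1$'' to a Zariski neighbourhood of $t=0$, and to reach the general fibre one needs both that the support of a coherent sheaf on $\mathbb{A}^1$ missing the origin is \emph{finite} and that Manon's degeneration is generically trivial (isomorphic to $\mathcal{A}_x$ over $\mathbb{A}^1\setminus\{0\}$). The $\Delta$-genus bookkeeping and the comparison with the saturation in the third step are routine given the material of Sections~\ref{Delta} and~\ref{sho}.
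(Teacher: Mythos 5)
Your proof is correct, and its first half coincides with the paper's: both invoke Manon's Rees-algebra degeneration to produce a flat graded family over $\Bbb{A}^1$ with special fibre $\widetilde{\mathcal{A}}_x$ and general fibre $\mathcal{A}_x$, and both transfer generation in degree one from $t=0$ to $\mathcal{A}_x$ by a Nakayama-plus-spreading-out argument (your cokernel-support formulation is the careful version of the paper's one-line appeal to Nakayama and constancy of the algebra for $t\neq 0$). Where you genuinely diverge is the endgame. The paper forms the relative $\operatorname{Proj}$ of the whole family, $\pi:\Bbb{P}\to\Bbb{A}^1$ with its relatively ample $\mathcal{O}(1)$, uses the $\Delta$-genus-zero cohomology vanishing (Remark \ref{vanish}) on the special fibre together with semicontinuity and base change to show $\mathcal{C}_m\to\pi_*\mathcal{O}(m)$ is an isomorphism on every fibre, and then reads off constancy of the $\Delta$-invariant from flatness of the family. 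You instead argue fibrewise with $X=\operatorname{Proj}(\mathcal{A}_x)$: integrality (Remark \ref{integro}), equality of Hilbert functions, the classical bound $\deg\ge 1+\operatorname{codim}$ and Fujita's inequality $\Delta\ge 0$ pin down $(X,\mathcal{O}_X(1))$ as a variety of minimal degree, and degree-one generation of section rings of $\Delta$-genus-zero pairs closes the sandwich $\mathcal{A}_x\subseteq\bigoplus_{m}H^0(X,L^{\otimes m})\subseteq\mathcal{A}_x$. This avoids cohomology-and-base-change entirely, at the price of leaning harder on the special numerics of $\Delta$-genus zero; the paper's family argument is the one that would adapt beyond the minimal-degree setting (compare Section \ref{HigherGenusEvidence}), while yours is more elementary and self-contained for the case at hand. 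One small reordering to make your third step airtight: you assert $\Delta(X,L)=0$ from minimal degree before knowing $h^0(X,L)$; the clean chain is $h^0(X,L)\ge R$ from the inclusion $(\mathcal{A}_x)_1\hookrightarrow H^0(X,L)$, hence $\Delta(X,L)\le d+D-R=0$, and $\Delta(X,L)\ge 0$ by Fujita, which yields $\Delta(X,L)=0$ and $h^0(X,L)=R$ simultaneously.
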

\begin{proof}
Manon has  introduced \cite[Section 3]{Manon} a filtration of the algebra ${\mathcal{A}}_x$, so that the associated graded algebra equals $\widetilde{\mathcal{A}}_x$. Using a Rees algebra construction \cite[see Eq 45, page 16]{Manon}, he forms a (flat) family of algebras $\mathcal{C}_{\bullet}$ parametrized by $\Bbb{A}^1$ such that for $0\neq t\in \Bbb{A}^1$, $\mathcal{C}_{t} \cong {\mathcal{A}}_x$, and $\mathcal{C}_0 \cong \widetilde{\mathcal{A}}_x$.

Since $\widetilde{\mathcal{A}}_x$ is generated in degree $1$, by Nakayama's lemma and the fact that the algebras are constant for $t\neq 0$, we see that $\mathcal{C}_{\bullet}$ is generated in degree $1$. We can therefore form $\Bbb{P}= \operatorname{Proj}(\mathcal{C}_{\bullet})$ over $\Bbb{A}^1$. Since  $\mathcal{C}_{\bullet}$ is a flat sheaf of algebras over $\Bbb{A}^1$, we get a flat family $\pi:\Bbb{P}\to \Bbb{A}^1$ with a relatively ample line bundle $\mathcal{O}(1)$, and  maps  $\mathcal{C}_m \to \pi_*\mathcal{O}(m)$.

The higher cohomology ${H}^i(\pi^{-1}(t),\mathcal{O}(m)|_{\pi^{-1}(t)}),\ i>0,\ m\geq 0$ vanishes for $t=0$ because of our assumption on $\Delta$ invariants (and Remark \ref{vanish}) the polarized variety carrying the algebra $\widetilde{\mathcal{A}}_x$ is necessarily the fiber of $\pi$ over $t=0$, and hence for all $t$ by semi-continuity. The map $\mathcal{C}_m \to \pi_*\mathcal{O}(m)$ is an isomorphism on fibers
at $t=0$, and hence in a neighborhood of $t=0$, and hence over $\Bbb{A}^1$. Therefore ${\mathcal{A}}_x$ which is integral, is the algebra of sections of the polarized variety $(\Bbb{P}_t,\mathcal{O}(m)|_{\Bbb{P}_t})$ for $t\neq 0$. The $\Delta$-invariant is constant in this family, and the desired statement follows.
 \end{proof}

\begin{remark}  If $\mathbb{V}$   is a bundle of rank $\op{R}$ such that
$\op{rk} \Bbb{V}[m] = \ {{m+R -1}\choose {R-1}}$, then one can use  \cite{Fakh} to reduce the proof of Theorem \ref{precisQ} to the statement for $n=4$ by showing that divisors on  both sides of the purported identity intersect all $\op{F}$-curves in the same degree.
\end{remark}

\begin{example}\label{MoreGoodBad}We saw  in Example \ref{GoodBad}, that for $\Bbb{V}=\Bbb{V}(\sL_2, 1)$ on $\ovmc{M}_{2}$, the divisor scaling identity fails,  and so the answer to Question \ref{TheBigOne} is no for this bundle.  To see that $\mathbb{V}$ has a socle which fails to have projective  rank scaling, we find the restriction data a generic point $x$  in $\Delta_{1, \emptyset}$, which can be represented by nodal curve where each arm has genus one.  There is one piece of valid restriction data at $x$, given by $\alpha=0$, and when restricted to each arm, the bundle $\mathbb{V}(\sL_2, \{0\}, 1)$ has rank 2 \cite{Fakh}. The polarized variety associated to the socle is $(\pone\times\pone,\mathcal{O}(1)\boxtimes\mathcal{O}(1))=(Q,\mathcal{O}(1))$ where $Q$ is a quadric in $\Bbb{P}^3$. This polarized variety is of $\Delta$-invariant zero, and degree $2$, and not $1$ as required for an application of Theorem \ref{precisQ}.
 \end{example}

\section{Examples with $\Delta$-invariant zero}\label{ExampleSection}
Here we give examples of families of bundles of $\Delta$-invariant zero of all types.   For $g=0$,  such examples can be found using the Macaulay 2 \cite{M2}, with packages \cite{Schur}, and \cite{ConfBlocks},  the latter of  which implements the formulas of Fakhruddin \cite{Fakh}.   Often long and combinatorial Schubert calculus arguments were used to show all  multiples $\mathbb{V}[m]$ had ranks predicted by experimentation.   We include these as they are representative.

\begin{remark}\label{plussing}In these and other examples we  use the operation of plussing, described in Def~\ref{plusDef}, which can often turn a quantum computation of ranks into a classical one.   We note that while plussing preserves the rank of a bundle, it does not preserve other invariants such as the
 first Chern class.
\end{remark}
\begin{definition}\label{plusDef}
Let $\sigma_j: \omega_i\mapsto \omega_{i+j (\operatorname{mod} r+1)}$ be a cyclic permutation of the affine Dynkin diagram of $\sL_{r+1}$. Then
$\rk \Bbb V(\sL_{r+1}, \{\lambda_1, \dots, \lambda_n\}, \ell) = \rk \Bbb V(\sL_{r+1}, \{\sigma_{j_1}\lambda_1, \dots, \sigma_{j_n}\lambda_n\}, \ell)$ if $j_1+\dots+j_n$ is divisible by $r+1$ \cite{FuchsSw}.
We refer to this operation as {\bf{plussing}}.
\end{definition}

\subsubsection{Families of bundles with $(\mathbb{P}^d,\mathcal{O}(1))$ scaling for $d=0$}
By a quantum generalization of Fulton's conjecture \cite{fultonconjecture} and \cite[Remark 8.5]{qdeform}, if $\mathbb{V}$ is  any bundle  of rank one, then $\rk(\mathbb{V}[m])$ will also be one, and so $\mathbb{V}$ will have projective scaling for $d=0$.  By \cite[Cor. 2.2]{BGMB}, in this case, one has that $c_1(\mathbb{V}[m])=m c_1 (\mathbb{V})$, so by Proposition \ref{Yess}, will have geometric extensions across the boundary.     These include all level one bundles for $\sL_{r+1}$, as proved in \cite{GiansiracusaGibney}.  In \cite{Kaz} a complete characterization  of $\op{S}_n$-invariant bundles of rank one for $\sL_{n}$ on $\ovmc{M}_{0,n}$ was given.  In \cite{Hobson} all rank one bundles for $\sL_2$ on $\ovmc{M}_{0,n}$ were described, as well as generalizations, those bundles for $\sL_{2m}$ with so-called {\em{rectangular}} weights.

\subsubsection{A family of bundles with  $(\mathbb{P}^d,\mathcal{O}(1))$ scaling for $d=1$}\label{ProjectiveYes}
 If $n>1$ is odd, let $\vec\lambda = \{2k\omega_1, ((2k+1)\omega_1)^{n-1}\}$, and if $n>1$ is even, let $\vec\lambda = \{\omega_1, ((2k+1)\omega_1)^{n-1}\}$. Define
$\mathbb V= \mathbb{V}(\sL_2, \vec\lambda, 2k+1)$.
By Remark~\ref{plussing}, $\rk \mathbb V[m] = \rk  \mathbb{V}(\sL_2, 2km\omega_1, \, m(2k+1))=m+1$.  We note that the restriction data for $\mathbb V$ at a point $x$ in the boundary $\Delta_{irr}$ is given by $\mu_1=\mu_1^*= k\omega_1$ and $\mu_2=\mu_2^*= (k+1)\omega_1$, so it is free,  and we have $\rk \mathbb V (\sL_2, \mu_i, \mu_i^*, \vec\lambda, 2k+1)=1$ for $i=1,2$.  At a point $x \in \Delta_{1,J} =\Delta_{0,J^c}$, one also gets free restriction data (which differs depending on the parity of $J$), and all restrictions have rank one.  Therefore, by Theorem~\ref{precisQ}, $\mathbb V$ satisfies projective space Chern class scaling, i.e. $c_1(m\mathbb V)= {m+1\choose 2} c_1(\mathbb V)$, and there is a geometric interpretation  of the conformal block $\mathbb{V}(\sL_2, \vec\lambda, 2k+1)|_{x}$ for all points $x \in \ovmc{M}_{1,n}$.

\subsubsection{A family on $\ovmc{M}_{0,n}$, for $n \ge 4$ with  $(\mathbb{P}^d,\mathcal{O}(1))$ scaling for arbitrary $d$}\label{ProjectiveYesTwo}On $\ovmc{M}_{0,4}$ let $\mathbb{V}=\mathbb{V}(\sL_{r+1},\{(\omega_{i}+\omega_{r+1-i})^4\},2)$. One can check, using Littlewood-Richardson, that  $\rk \mathbb V[m]={{d+m}\choose m}$, where
 $d=2i \le r+1-2i$ (so $i \le \lfloor \frac{r+1}{4} \rfloor$), so these vector bundles satisfy projective rank scaling. These bundles are $\op{S}_4$-invariant and so there is just one boundary restriction, up to symmetry.  The restriction data at the boundary point is free, given by the $d+1$ weights  $\alpha_j=\omega_j+\omega_{r+1-j}$, where $1 \le j \le d$, and $\alpha_{d+1}=0$. $\mathbb{V}$ has quasi rank one factorization, since  for all $j \in \{1,\ldots, d+1\}$,  $\op{rk}\mathbb{V}(\sL_{r+1}, \{\omega_{i}+\omega_{r+1-i},\omega_{i}+\omega_{r+1-i},\alpha_j\},2)=\op{rk}\mathbb{V}(\sL_{r+1}, \{\omega_{i}+\omega_{r+1-i},\omega_{i}+\omega_{r+1-i},\alpha_j^*\},2)=1$.  We have checked that we may apply Theorem \ref{precisQ} to conclude that first Chern class scaling identities hold for this bundle $\mathbb{V}$, and using Fakhruddin's formula \cite{Fakh}, one can see (by a straightforward, but somewhat involved combinatorial argument), that $\op{deg}(\mathbb{V})=\frac{d(d+1)}{2}={{d+1}\choose2}$.  So
$\op{deg}\Bbb{V}[m]={{m+d}\choose{d+1}}{{d+1}\choose2}.$
For $n=5$, we take $\mathbb{V}(\sL_{r+1}, \{(\omega_{i}+\omega_{r+1-i})^3, \omega_{i-1}+\omega_{r-i}, 2\omega_1\}, 2)$.
For $n \ge 6$, we let  $\lambda_j = \omega_i+ \omega_{r+1-i}$, where $i \le \lfloor \frac{r+1}{4} \rfloor$ for $j=1,\dots,4$. Let $\lambda_i=2\omega_1$ for $j=5, \dots, n-1$, $s\equiv (r-n+6) \op{mod}\  (r+1)$, and  $\lambda_n=2\omega_s$. We can define  $\mathbb{V}=\mathbb{V}(\sL_{r+1},\{\vec\lambda\},2)$.
Since the bundles are obtained by plussing (see Remark \ref{plussing}), we  conclude from the $n=4$ calculation that  in both cases,  $\rk \mathbb{V}[m]={{d+m}\choose m}$, where $d=2i \le r+1-2i$.  Checking restriction data, it is straightforward to verify that one can apply Theorem \ref{precisQ}.
We outline the approach for  $n\geq6$:
\begin{enumerate}
\item $\{1,2,3,4\}\subset I$. In this case the restriction data is given by $\mu=2\omega_k$, where $k \equiv( r+1 - \frac{1}{2}\sum_{j\in I} |\lambda_j| )\op{mod}\  (r+1)$. There is one piece of restriction data, so it is free and just as before we have $\rk \mathbb V(\sL_{r+1}, \{\vec\lambda(I), \mu\}, 2) =d+1$, and $\rk \mathbb V(\sL_{r+1}, \{\vec\lambda(I^C), \mu^*\}, 2) =1$.
An argument shows that $\rk \mathbb V(\sL_{r+1}, \{m\vec\lambda(I), m\mu\}, 2m) ={{d+m}\choose m}$, so that the socle satisfies projective rank scaling.
\item $a \in I$, $b, c, d\not\in I$, where $\{a,b,c,d\}=\{1,2,3,4\}$. In this case the restriction data is given by $\mu=\omega_{i+s}+\omega_{r+1-i+s}$, where the indices are modulo $(r+1)$, and $s$ is such that $\frac{1}{2}\sum_{i\in I, i \neq 1 }|\lambda_i|\equiv (r+1-s)\op{mod}\ (r+1)$.
The restriction data is free, moreover, we have  $\rk \mathbb V(\sL_{r+1}, \{m\vec\lambda(I), m\mu\}, 2m) =1$, and one can conclude  $\rk \mathbb V(\sL_{r+1}, \{m\vec\lambda(I^C), m\mu^*\}, 2m) ={{d+m}\choose m}$.
\item  $\{a,b\} \subset I$, $c, d\not\in I$, where $\{a,b,c,d\}=\{1,2,3,4\}$. In this case the restriction data is given by $\mu_j=\omega_j+s+\omega_{r+1-j+s}$, where indices are taken modulo $(r+1)$,  $0 \le j \le d$, and $s$ is such that $\frac{1}{2}\sum_{i\in I, i \neq 1 }|\lambda_i|\equiv (r+1-s)\op{mod}\ (r+1)$. Restriction data is free, and  for all $j \in \{1,\ldots, d+1\}$ we have
$\op{rk}\mathbb{V}(\sL_{r+1}, \vec\lambda(I),\alpha_j\},2)=\op{rk}\mathbb{V}(\sL_{r+1},\vec\lambda(I^C),\alpha_j^*\},2)=1.$
\noindent
\end{enumerate}

\subsubsection{Quadric scaling on $\ovop{M}_{0,n}$ for  $n\ge 5$.}\label{QuadricExamples}\label{QHS}
Let $\mathbb{V}=\mathbb{V}(\sL_4,\{\omega_1+3\omega_2+\omega_3, 3\omega_1+\omega_2+\omega_3, 2\omega_1+\omega_2+2\omega_3,(7\omega_1)^{n-4},7\omega_s\},7)$, where $s \equiv (-n) \mbox{ mod } 4$.
One computes ranks of $\Bbb{V}[m]$ by noticing that $\mathbb{V}$ is obtained by applying the plussing operation to
$\mathbb{V}=\mathbb{V}(\sL_4,\{\omega_1+3\omega_2+\omega_3, 3\omega_1+\omega_2+\omega_3, 2\omega_1+\omega_2+2\omega_3\},7)$ on $\ovop{M}_{0,3}$ which satisfies
$\operatorname{rk} (\Bbb{V}[m]_3)=2{{m+2}\choose {3}} + {{m+2} \choose {2}}$.   To verify $\mathbb{V}$ has quadric hypersurface rank scaling for $d=3$, and that all the conditions of Claim \ref{precisQ} were met, we computed ranks using  Witten's Dictionary \ref{WD} together with the plussing operation and Littlewood-Richardson counting arguments.
So by Lemma \ref{QEquiv}, one has that $\mathbb{V}^* \cong {H}^0(\mc{X},\mc{L})$, where
$\mc{L}$ embeds $\mc{X}$ as  a quadric hypersurface in $\mathbb{P}^4$.   Given any $x \in \delta_{I}$ for any $I$ there will always be a unique socle with quadric rank scaling.  For example, it will have restriction weight $\mu=0$ at $x\in \delta_{123}$, and
the rank of $\mathbb{V}(\sL_{4},\{\lambda_1,\lambda_2,\lambda_3,0\}, 7)$ has quadric hypersurface scaling (for $d=3$).     Therefore $\mathbb{V}$ satisfies the hypothesis of Theorem \ref{precisQ}, so multiples $c_1(\Bbb{V}[m])$ are governed by first Chern class scaling identities given in Corollary \ref{MainLemma}.

\subsubsection{Rational normal scroll scaling on on $\ovmc{M}_{0,n}$ for $n\ge 5$}We set
$\mathbb{V}=\mathbb{V}(\sL_2, \{(2\omega_1)^4,4\omega_1, (5\omega_1)^{n-6}, 5\omega_s\}, 5)$ with $s \equiv n  \ ( \op{mod}\  (2))$.   One can check that $\mathbb{V}$ satisfies $(S(1,2),\mathcal{O}(1))$ scaling so that $\rk \Bbb{V}[m]= (m+1)(1+\frac{3m}{2})$.  To do so we use Witten's Dictionary, Theorem \ref{WD} and Quantum Pieri on $\ovmc{M}_{0,5}$ and then apply the plussing procedure.  One can use \cite{ConfBlocks} to compute coefficients of the   first Chern classes of multiples $\Bbb{V}[m]$ in the nonadjacent bases on $\ovmc{M}_{0,5}$ given by  $\{\delta_{13},\delta_{14}, \delta_{24}, \delta_{25}, \delta_{35}\}$:
$c_1(\mathbb{V})=2(\delta_{14}+ \delta_{25}+ \delta_{35})$,
 $c_1(\mathbb{V}[2])=9(\delta_{14}+ \delta_{25}+ \delta_{35})$,
$c_1(\mathbb{V}[3])=24(\delta_{14}+ \delta_{25}+ \delta_{35})$, and
$c_1(\mathbb{V}[4])=50(\delta_{14}+ \delta_{25}+ \delta_{35})$.
We calculate
$c_1(\Bbb{V}[4])= 10 c_1(\Bbb{V}) - 10 c_1(\Bbb{V}[2]) +5 c_1(\Bbb{V}[3])$, for example.
so the predicted divisor identity given in Section \ref{scrolls}, holds for $c_1(V[4])$.

\subsubsection{Veronese surface scaling on $\ovmc{M}_{0,n}$}
Let $\mathbb{V}=\mathbb{V}(\sL_2, \{\omega_1, 3\omega_1, 4\omega_1, 4\omega_1, 6\omega_1, (8\omega_{1})^{n-6}, 8\omega_s\}, 8)$, for which $s \equiv n \ ( \op{mod}\  (2))$.
$\mathbb{V}[m]$ can be shown to have rank $(m+1)(2m+1)$.  To do so we work first
on $\ovmc{M}_{0,5}$, with the bundle $\mathbb{V}=\mathbb{V}(\sL_2, \{\omega_1, 3\omega_1, 4\omega_1, 4\omega_1, 6\omega_1\}, 8)$ and argue using Witten's dictionary, Theorem \ref{WD}.  We then use the standard plussing procedure.
In the non-adjacent basis for the Picard group of $\ovmc{M}_{0,5}$ consisting of $\delta_{13}$, $\delta_{14}$, $\delta_{24}$, $\delta_{25}$, and $\delta_{35}$, one can use \cite{ConfBlocks} to check that the first five multiples of $\mathbb{V}$ have the following expressions:
$c_1(\mathbb{V})=\delta_{14}+2\delta_{24}+\delta_{25} + 3\delta_{35}; \ c_1(\mathbb{V}[2])=4 \delta_{14}+11 \delta_{24}+ 4 \delta_{25} + 15 \delta_{35}; \ c_1( \mathbb{V}[3])=10 \delta_{14}+32\delta_{24}+ 10 \delta_{25} + 42 \delta_{35}; \ c_1( \mathbb{V}[4])=20 \delta_{14}+70 \delta_{24}+ 20 \delta_{25} + 90 \delta_{35}$; and
 $c_1( \mathbb{V}[5])=35 \delta_{14}+130 \delta_{24}+ 35 \delta_{25} + 165 \delta_{35}$,
A calculation shows that
$c_1(\mathbb{V}[5])= -15 c_1(\mathbb{V}) + 20 c_1(\mathbb{V}[2]) -15 c_1(\mathbb{V}[3]) + 6 c_1(\mathbb{V}[4])$, as is predicted.
While the identity holds, $\mathbb{V}$ does not satisfy the hypothesis of Theorem \ref{precisQ}.  At a point of $\delta_{123}$, one finds three pieces of restriction data: $\mu \in \{2\omega_1, 4\omega_1, 6\omega_1\}$, and  by \cite{ConfBlocks},
$\op{rk}(\mathbb{V}(\sL_2,\{\lambda_1,\lambda_2,\lambda_3,\mu\},8)=2$.

\subsubsection{Twisted cubic  scaling on $\ovmc{M}_{1,n}$, for $n \ge 1$}

 For $d$ even, let $\mathbb{V}^d_k=\mathbb{V}(\sL_{2}, \{2k\omega_1, ((d+2k)\omega_1)^{n-2}, (d+2k)\omega_s\} , d+2k)$, with $s\equiv n \ ( \op{mod}\  (2))$.    First let $n=1$.   By \cite[p. 27]{Fakh}, if $i$ is even,  the rank of $\mathbb{V}(\sL_{2}, i\omega_1, \ell)$, is
$\ell+1-i$, and one has that $\op{rk}( \mathbb{V}^d_k [m])=dm+1$.   Now by plussing, one has that $\op{rk}( \mathbb{V}^d_k [m])=dm+1$ for all $n \ge 1$.  Returning to the case $n=1$, while ranks of multiples do not depend on $k$, by \cite[Corollary 6.2]{Fakh} degrees do:
$\op{deg}(\mathbb{V}^d_k)=-\frac{m(dm+1)(k+d)}{12}$.
This bundle satisfies the identity for $d=3$ in Equation \ref{Twisted}:
$\frac{48(k-3)-4(k-3)}{12}=\op{deg}(\Bbb{V}^3_k[4])= 4 \op{deg}(\Bbb{V}^3_k) -6 \op{deg}(\Bbb{V}^3_k[2]) +4 \op{deg}(\Bbb{V}^3_k[3]).$
There is one necessary boundary restriction, and one has that $\mathbb{V}^d_k$ has quasi rank one factorization.

\medskip

\begin{bibdiv}
\begin{biblist}
\bib{AGS}{article}{
   author={Alexeev, Valery},
   author={Gibney, Angela},
   author={Swinarski, David},
   title={Higher-level $\germ{sl}_2$ conformal blocks divisors on
   $\overline M_{0,n}$},
   journal={Proc. Edinb. Math. Soc. (2)},
   volume={57},
   date={2014},
   number={1},
   pages={7--30},
   issn={0013-0915},
   review={\MR{3165010}},
   doi={10.1017/S0013091513000941},
}
\bib{BBV}{article}{
author={ Baldoni, Velleda},
author={Boysal, Arzu},
author={Vergne,Mich\`ele},

TITLE = {Multiple {B}ernoulli series and volumes of moduli spaces of
              flat bundles over surfaces},
JOURNAL = {J. Symbolic Comput.},
VOLUME = {68},
YEAR = {2015},
NUMBER = {part 2},
PAGES = {27--60},
}

\bib{Beauville}{article}{
   author={Beauville, Arnaud},
   title={Conformal blocks, fusion rules and the Verlinde formula},
   conference={
      title={Proceedings of the Hirzebruch 65 Conference on Algebraic
      Geometry },
      address={Ramat Gan},
      date={1993},
   },
   book={
      series={Israel Math. Conf. Proc.},
      volume={9},
      publisher={Bar-Ilan Univ., Ramat Gan},
   },
   date={1996},
   pages={75--96},
   review={\MR{1360497}},
}

\bib{BeauvilleLaszlo}{article}{
  author={Beauville, Arnaud},
  author={Laszlo, Yves},
  title={Conformal blocks and generalized theta functions},
  journal={Comm. Math. Phys.},
  volume={164},
  date={1994},
  number={2},
  pages={385--419},
}

\bib{b4}{article}{
    AUTHOR = {Belkale, Prakash},
    TITLE = {Quantum generalization of the {H}orn conjecture},
   JOURNAL = {J. Amer. Math. Soc.},
    VOLUME = {21},
    YEAR = {2008},
    NUMBER = {2},
     PAGES = {365--408},
}	
\bib{fultonconjecture}{article}
{
    AUTHOR = {Belkale, Prakash},
     TITLE = {Geometric proof of a conjecture of {F}ulton},
   JOURNAL = {Adv. Math.},
    VOLUME = {216},
      YEAR = {2007},
    NUMBER = {1},
     PAGES = {346--357},
      ISSN = {0001-8708},
     CODEN = {ADMTA4},
   MRCLASS = {20G05 (14M15 14N20)},
  MRNUMBER = {2353260 (2009a:20073)},
MRREVIEWER = {H. H. Andersen},
review={\MR{2353260 (2009a:20073)}}
}

\bib{BGMB}{article}{
  author={Belkale, Prakash},
  author={Gibney, Angela},
  author={Mukhopadhyay, Swarnava}
  title={Nonvanishing of conformal blocks divisors},
  journal={Transformation Groups},
  volume={},
  date={2015},
  note={See arXiv:1410.2459 [math.AG]}
}
\bib{BGMA}{article}{
   author={Belkale, Prakash},
   author={Gibney, Angela},
   author={Mukhopadhyay, Swarnava},
   title={Vanishing and identities of conformal blocks divisors},
   journal={Algebr. Geom.},
   volume={2},
   date={2015},
   number={1},
   pages={62--90},
}
	
	\bib{qdeform}{article}{
  author={Belkale, Prakash},
  author={Kumar, Shrawan},

  title={The multiplicative eigenvalue problem and deformed quantum cohomology},

  date={2013},

  note={arXiv:1310.3191 [math.AG]}
}

\bib{Bertini}{article}{
  author={Bertini, Eugenio},
  title={Introduzione alla geometria proiettiva degli iperspazi},
  journal={Enrico Spoerri, Pisa},
  date={1907},
}

\bib{BeltraSom}{book} {
    AUTHOR = {Beltrametti, Mauro C.},
    AUTHOR=  {Sommese, Andrew J.},
     TITLE = {The adjunction theory of complex projective varieties},
    SERIES = {de Gruyter Expositions in Mathematics},
    VOLUME = {16},
 PUBLISHER = {Walter de Gruyter \& Co., Berlin},
      YEAR = {1995},
     PAGES = {xxii+398},
}

\bib{Ber2}{article}{
  author={Bertram, Aaron},
  title={Generalized ${\rm SU}(2)$ theta functions},
  journal={Invent. Math.},
  volume={113},
  date={1993},
  number={2},
  pages={351--372},
}


\bib{BoG}{article} {
    AUTHOR = {Bolognesi, Michele}
    AUTHOR =  {Giansiracusa, Noah},
     TITLE = {Factorization of point configurations, cyclic covers, and
              conformal blocks},
   JOURNAL = {J. Eur. Math. Soc. (JEMS)},
  FJOURNAL = {Journal of the European Mathematical Society (JEMS)},
    VOLUME = {17},
      YEAR = {2015},
    NUMBER = {10},
     PAGES = {2453--2471},
}

\bib{Coble}{article}{
   author={Coble, A. B.},
   title={The quartic curve as related to conics},
   journal={Trans. Amer. Math. Soc.},
   volume={4},
   date={1903},
   number={1},
   pages={65--85},
}

\bib{DW}{article}{
   author={Daskalopoulos, Georgios},
   author={Wentworth, Richard},
   title={Local degeneration of the moduli space of vector bundles and
   factorization of rank two theta functions. I},
   journal={Math. Ann.},
   volume={297},
   date={1993},
   number={3},
   pages={417--466},
}

\bib{DP}{article}{
   author={Del Pezzo,  Pasquale},
   title={Sulle superficie di ordine n immerse nello spazio di $n+1$ dimensioni,},
   journal={Rend. Circ. Mat. Palermo},
   volume={1},
   date={1886},
}

\bib{DR}{article} {
    AUTHOR = {Desale, U. V.}
    AUTHOR =  {Ramanan, S.},
     TITLE = {Classification of vector bundles of rank {$2$} on
              hyperelliptic curves},
   JOURNAL = {Invent. Math.},
    VOLUME = {38},
      YEAR = {1976/77},
    NUMBER = {2},
     PAGES = {161--185},
}

\bib{Eisenbud}{book}{
    AUTHOR = {Eisenbud, David},
     TITLE = {The geometry of syzygies},
    SERIES = {Graduate Texts in Mathematics},
    VOLUME = {229},
      NOTE = {A second course in commutative algebra and algebraic geometry},
 PUBLISHER = {Springer-Verlag, New York},
      YEAR = {2005},
     PAGES = {xvi+243},
}	

\bib{EisenbudGoto}{article}{
   author={Eisenbud, David},
   author={Goto, Shiro},
   title={Linear free resolutions and minimal multiplicity},
   journal={J. Algebra},
   volume={88},
   date={1984},
   number={1},
   pages={89--133},
}	

\bib{EisHarrMinDeg}{article}{
   author={Eisenbud, David},
   author={Harris, Joe},
   title={On varieties of minimal degree (a centennial account)},
   conference={
      title={Algebraic geometry, Bowdoin, 1985},
      address={Brunswick, Maine},
      date={1985},
   },
   book={
      series={Proc. Sympos. Pure Math.},
      volume={46},
      publisher={Amer. Math. Soc., Providence, RI},
   },
   date={1987},
   pages={3--13},
}

	\bib{Fakh}{article}{
   author={Fakhruddin, Najmuddin},
   title={Chern classes of conformal blocks},
   conference={
      title={Compact moduli spaces and vector bundles},
   },
   book={
      series={Contemp. Math.},
      volume={564},
      publisher={Amer. Math. Soc.},
      place={Providence, RI},
   },
   date={2012},
   pages={145--176},
}

\bib{Faltings}{article}{
  author={Faltings, Gerd},
  title={A proof for the Verlinde formula},
  journal={J. Algebraic Geom.},
  volume={3},
  date={1994},
  number={2},
  pages={347--374},
}

\bib{Faltings2}{article} {
    AUTHOR = {Faltings, Gerd},
     TITLE = {Moduli-stacks for bundles on semistable curves},
   JOURNAL = {Math. Ann.},
    VOLUME = {304},
      YEAR = {1996},
    NUMBER = {3},
     PAGES = {489--515},
}

\bib{Max} {article}{
    AUTHOR = {Fedorchuk, M.}

     TITLE = {Cyclic covering morphisms on $\overline{M}_{0,n}$},
   JOURNAL = {},
    VOLUME = {},
      YEAR = {2011},
     PAGES = {},
     NOTE={ arXiv:1105.0655},
}

\bib{FuchsSw}{article}{
   author={Fuchs, J{\"u}rgen},
   author={Schweigert, Christoph},
   title={The action of outer automorphisms on bundles of chiral blocks},
   journal={Comm. Math. Phys.},
   volume={206},
   date={1999},
   number={3},
   pages={691--736},
}

\bib{fujita}{book} {
    AUTHOR = {Fujita, Takao},
     TITLE = {Classification theories of polarized varieties},
    SERIES = {London Mathematical Society Lecture Note Series},
    VOLUME = {155},
 PUBLISHER = {Cambridge University Press, Cambridge},
      YEAR = {1990},
     PAGES = {xiv+205},
}

\bib{Gi}{article} {
    AUTHOR = {Giansiracusa, Noah},
     TITLE = {Conformal blocks and rational normal curves},
   JOURNAL = {J. Algebraic Geom.},
  FJOURNAL = {Journal of Algebraic Geometry},
    VOLUME = {22},
      YEAR = {2013},
    NUMBER = {4},
     PAGES = {773--793},
      ISSN = {1056-3911},
}

\bib{GiansiracusaGibney}{article}{
  author={Giansiracusa, Noah},
  author={Gibney, Angela},
  title={The cone of type A, level 1 conformal block divisors},
  journal={Adv. Math.}
  volume={231},
  page={798--814},
  year={2012},
}

\bib{GJMS}{article} {
    AUTHOR = {Gibney, Angela}
    AUTHOR = {Jensen, David},
    AUTHOR=  {Moon, Han-Bom},
    AUTHOR=  {Swinarski, David},
     TITLE = {Veronese quotient models of {$\overline{\rm M}_{0,n}$} and
              conformal blocks},
   JOURNAL = {Michigan Math. J.},
  FJOURNAL = {Michigan Mathematical Journal},
    VOLUME = {62},
      YEAR = {2013},
    NUMBER = {4},
     PAGES = {721--751},
       URL = {http://dx.doi.org/10.1307/mmj/1387226162},
}

\bib{goren}{article}{
    AUTHOR = {Goren, Robert},
     TITLE = {Characterization and algebraic deformations of projective
              space},
   JOURNAL = {J. Math. Kyoto Univ.},
    VOLUME = {8},
      YEAR = {1968},
     PAGES = {41--47},
}

\bib{M2}{article}{
          author = {Grayson, Daniel R.},
          author={Stillman, Michael E.},
          title = {Macaulay2, a software system for research
                   in algebraic geometry},
          howpublished = {Available at
              \href{http://www.math.uiuc.edu/Macaulay2/}%
                   {http://www.math.uiuc.edu/Macaulay2/}}
        }

\bib{GH}{book}{
   author={Griffiths, Phillip},
   author={Harris, Joseph},
   title={Principles of algebraic geometry},
   note={Pure and Applied Mathematics},
   publisher={Wiley-Interscience [John Wiley \& Sons], New York},
   date={1978},
   pages={xii+813},
}
\bib{HarrisBound}{article}{
   author={Harris, Joe},
   title={A bound on the geometric genus of projective varieties},
   journal={Ann. Scuola Norm. Sup. Pisa Cl. Sci. (4)},
   volume={8},
   date={1981},
   number={1},
   pages={35--68},
}

\bib{Hobson}{article}{
  author={Hobson, Natalie},
  title={Quantum Kostka and the rank one problem for $\sL_{2m}$},
  date={2015},
  note={arXiv:1508.06952 [math.AG] },
}

\bib{Kau} {article}{
    AUTHOR = {Kausz, Ivan},
     TITLE = {A canonical decomposition of generalized theta functions on
              the moduli stack of {G}ieseker vector bundles},
   JOURNAL = {J. Algebraic Geom.},
    VOLUME = {14},
      YEAR = {2005},
    NUMBER = {3},
     PAGES = {439--480},
}

\bib{Kaz}{article}{
   author={Kazanova, Anna},
   title={On $S_n$-invariant conformal blocks vector bundles of rank one
   on $\overline{M}_{0,n}$},
   journal={Manuscripta Math.},
   volume={149},
   date={2016},
   number={1-2},
   pages={107--115},
}

\bib{KP}{article}{
   author={Kouvidakis, Alexis},
   author={Pantev, Tony},
   title={The automorphism group of the moduli space of semistable vector
   bundles},
   journal={Math. Ann.},
   volume={302},
   date={1995},
   number={2},
   pages={225--268},
}
	
\bib{KNR}{article}{
  author={Kumar, Shrawan},
  author={Narasimhan, M. S.},
  author={Ramanathan, A.},
  title={Infinite Grassmannians and moduli spaces of $G$-bundles},
  journal={Math. Ann.},
  volume={300},
  date={1994},
  number={1},
  pages={41--75},
}

\bib{CBRestrictor}{article}{
		author={Krashen, Daniel},
		title={\texttt{\upshape CBRestrictor}: a Macaulay2 script for computing ranks of vector bundles of conformal blocks on moduli of higher genus curves},
		date={2015},
		note={available at https://github.com/dkrashen/conformal-blocks-rational-locus-restrictor.},
}

\bib{LaszloSorger}{article}{
  author={Laszlo, Yves},
  author={Sorger, Christoph},
  title={The line bundles on the moduli of parabolic $G$-bundles over curves and their sections},
  journal={Ann. Sci. \'Ecole Norm. Sup. (4)},
  volume={30},
  date={1997},
  number={4},
  pages={499--525},
}

\bib{Manon}{article}{
  author={Manon, Chris},
  title={The Algebra of Conformal blocks},
  date={2009},
  note={arXiv:0910.0577},
}

\bib{MOP}{article}{
   author={Marian, Alina},
   author={Oprea, Dragos},
   author={Pandharipande, Rahul},
   title={The first Chern class of the Verlinde bundles},
   conference={
      title={String-Math 2012},
   },
   book={
      series={Proc. Sympos. Pure Math.},
      volume={90},
      publisher={Amer. Math. Soc., Providence, RI},
   },
   date={2015},
   pages={87--111},
}

\bib{Mumford}{article}{
   author={Mumford, David},
   title={Towards an enumerative geometry of the moduli space of curves},
   conference={
      title={Arithmetic and geometry, Vol. II},
   },
   book={
      series={Progr. Math.},
      volume={36},
      publisher={Birkh\"auser Boston, Boston, MA},
   },
   date={1983},
   pages={271--328},
   review={\MR{717614 (85j:14046)}},
}

\bib{Nagata}{article}{
   author={Nagata, Masayoshi},
   title={On rational surfaces. I. Irreducible curves of arithmetic genus
   $0$\ or $1$},
   journal={Mem. Coll. Sci. Univ. Kyoto Ser. A Math.},
   volume={32},
   date={1960},
   pages={351--370},
}

\bib{UWE}{article}{
   author={Nagel, Uwe},
   title={Minimal free resolutions of projective subschemes of small degree},
   conference={
      title={Syzygies and Hilbert functions},
   },
   book={
      series={Lect. Notes Pure Appl. Math.},
      volume={254},
      publisher={Chapman \& Hall/CRC, Boca Raton, FL},
   },
   date={2007},
   pages={209--232},
}

\bib{NR2}{article}{
   author={Narasimhan, M. S.},
   author={Ramadas, T. R.},
   title={Factorisation of generalised theta functions. I},
   journal={Invent. Math.},
   volume={114},
   date={1993},
   number={3},
   pages={565--623},
}

\bib{NR}{article}{
    AUTHOR = {Narasimhan, M. S.}
    AUTHOR = {Ramanan, S.},
     TITLE = {Moduli of vector bundles on a compact {R}iemann surface},
   JOURNAL = {Ann. of Math. (2)},
    VOLUME = {89},
      YEAR = {1969},
     PAGES = {14--51},
}
\bib{NR3}{incollection} {
    AUTHOR = {Narasimhan, M. S.}
    AUTHOR =  {Ramanan, S.},
     TITLE = {{$2\theta$}-linear systems on abelian varieties},
 BOOKTITLE = {Vector bundles on algebraic varieties ({B}ombay, 1984)},
    SERIES = {Tata Inst. Fund. Res. Stud. Math.},
    VOLUME = {11},
     PAGES = {415--427},
 PUBLISHER = {Tata Inst. Fund. Res., Bombay},
      YEAR = {1987},
}
\bib{nguyen}{article} {
    AUTHOR = {Nguyenn, Quang Minh},
     TITLE = {Vector bundles, dualities and classical geometry on a curve of
              genus two},
   JOURNAL = {Internat. J. Math.},
    VOLUME = {18},
      YEAR = {2007},
    NUMBER = {5},
     PAGES = {535--558},
}

\bib{Ortega}{article}{
   author={Ortega, Angela},
   title={On the moduli space of rank 3 vector bundles on a genus 2 curve
   and the Coble cubic},
   journal={J. Algebraic Geom.},
   volume={14},
   date={2005},
   number={2},
   pages={327--356},
}

	\bib{Pauly}{article}{
  author={Pauly, Christian},
  title={Espaces de modules de fibr\'es paraboliques et blocs conformes},
  language={French},
  journal={Duke Math. J.},
  volume={84},
  date={1996},
  number={1},
  pages={217--235},
}
\bib{Ramadas}{article} {
    AUTHOR = {Ramadas, T. R.},
     TITLE = {Factorisation of generalised theta functions. {II}. {T}he
              {V}erlinde formula},
   JOURNAL = {Topology},
    VOLUME = {35},
      YEAR = {1996},
    NUMBER = {3},
     PAGES = {641--654},
}

\bib{Schur}{article}{
		author={Schenck, Hal},
		author={Stillman, Michael}
		title={\texttt{\upshape SchurRings}: a Macaulay2 package make computations in the representation ring of GL(n) possible},
		date={2007},
		note={Version 0.2, {http://www.math.uiuc.edu/Macaulay2/}},
}

\bib{sorger}{article} {
    AUTHOR = {Sorger, Christoph},
     TITLE = {La formule de {V}erlinde},
      NOTE = {S{\'e}minaire Bourbaki, Vol. 1994/95},
   JOURNAL = {Ast\'erisque},
    NUMBER = {237},
      YEAR = {1996},
     PAGES = {Exp.\ No.\ 794, 3, 87--114},
}

\bib{Su1}{article}{
   author={Sun, Xiaotao},
   title={Degeneration of moduli spaces and generalized theta functions},
   journal={J. Algebraic Geom.},
   volume={9},
   date={2000},
   number={3},
   pages={459--527},
  issn={1056-3911},
}

\bib{Su2}{article}{
   author={Sun, Xiaotao},
   title={Factorization of generalized theta functions in the reducible
   case},
   journal={Ark. Mat.},
   volume={41},
   date={2003},
   number={1},
   pages={165--202},
}

\bib{Swin}{article}{
  author={Swinarski, David},
  title={$\sL_2$ conformal block divisors and the nef cone of $\ovmc{M}_{0,n}$ },
  date={2009},
  note={ arXiv:1107.5331},
}

\bib{ConfBlocks}{article}{
		author={Swinarski, David},
		title={\texttt{\upshape ConformalBlocks}: a Macaulay2 package for computing conformal block divisors},
		date={2010},
		note={Version 1.1, {http://www.math.uiuc.edu/Macaulay2/}},
}

\bib{Teleman}{article}{
    AUTHOR = {Teleman, Constantin},
     TITLE = {Borel-{W}eil-{B}ott theory on the moduli stack of
              {$G$}-bundles over a curve},
   JOURNAL = {Invent. Math.},
    VOLUME = {134},
      YEAR = {1998},
    NUMBER = {1},
     PAGES = {1--57},
}

\bib{telly}{article}{
    AUTHOR = {Teleman, Constantin},
     TITLE = {The quantization conjecture revisited},
   JOURNAL = {Ann. of Math. (2)},
    VOLUME = {152},
      YEAR = {2000},
    NUMBER = {1},
     PAGES = {1--43},
}

\bib{Thaddeus}{article}{
  author={Thaddeus, Michael},
  title={Stable pairs, linear systems and the Verlinde formula},
  journal={Invent. Math.},
  volume={117},
  date={1994},
  number={2},
  pages={317--353},
}

\bib{TUY}{article} {
    AUTHOR = {Tsuchiya, Akihiro}
    AUTHOR = {Ueno, Kenji}
    AUTHOR = {Yamada, Yasuhiko},
     TITLE = {Conformal field theory on universal family of stable curves
              with gauge symmetries},
 BOOKTITLE = {Integrable systems in quantum field theory and statistical
              mechanics},
    SERIES = {Adv. Stud. Pure Math.},
    VOLUME = {19},
     PAGES = {459--566},
 PUBLISHER = {Academic Press},
   ADDRESS = {Boston, MA},
      YEAR = {1989},
  }

  \bib{Wi}{article}{
   author={Witten, Edward},
   title={On quantum gauge theories in two dimensions},
   journal={Comm. Math. Phys.},
   volume={141},
   date={1991},
   number={1},
   pages={153--209},
}

\bib{witten}{incollection} {
    AUTHOR = {Witten, Edward},
     TITLE = {The {V}erlinde algebra and the cohomology of the
              {G}rassmannian},
 BOOKTITLE = {Geometry, topology, \& physics},
    SERIES = {Conf. Proc. Lecture Notes Geom. Topology, IV},
     PAGES = {357--422},
 PUBLISHER = {Int. Press, Cambridge, MA},
      YEAR = {1995},
}

\bib{X}{article}{
   author={Xamb{\'o}, S.},
   title={On projective varieties of minimal degree},
   journal={Collect. Math.},
   volume={32},
   date={1981},
   number={2},
   pages={149--163},
}

\bib{Zagier}{article}{
   author={Zagier, Don},
   title={On the cohomology of moduli spaces of rank two vector bundles over
   curves},
   conference={
      title={The moduli space of curves},
      address={Texel Island},
      date={1994},
   },
   book={
      series={Progr. Math.},
      volume={129},
      publisher={Birkh\"auser Boston, Boston, MA},
   },
   date={1995},
   pages={533--563},
}

\end{biblist}
\end{bibdiv}

\bigskip
\noindent
P.B.: Department of Mathematics, University of North Carolina, Chapel Hill, NC 27599,\\
{{email: belkale@email.unc.edu}}

\vspace{0.1 cm}

\noindent
A.G.: Department of Mathematics, University of Georgia, Athens, GA 30602,\\
{{email: agibney@math.uga.edu}}

\vspace{0.1 cm}

\noindent
A.K.: Department of Mathematics, University of Georgia, Athens, GA 30602,\\
{{email: kazanova@math.uga.edu}}

\end{document}